\newcommand{\cL}{\mathcal{L}}             
\newcommand{\cO}{\mathcal{O}}             
\newcommand{\CC}{\mathbb{C}}              
\newcommand{\RR}{\mathbb{R}}              
\newcommand{\NN}{\mathbb{N}}               
\newcommand{\B}{\mathcal{B}}             
\newcommand{\fg}{\mathfrak{g}}             
\newcommand{\one}{\mathbbm{1}}             
\DeclareMathOperator{\Real}{Re}
\renewcommand{\@secnumfont}{\bfseries}
  \def\section{\@startsection{section}{1}%
    \z@{.7\linespacing\@plus\linespacing}{.5\linespacing}%
    {\normalfont\LARGE\bfseries}}
\def\@seccntformat#1{%
  \protect\textup{%
    \protect\@secnumfont
    \expandafter\protect\csname format#1\endcsname 
    \csname the#1\endcsname
    \protect\@secnumpunct
  }%
}
\newcommand{\sect}
{
  \setcounter{equation}{0}
  \setcounter{figure}{0}
  \section
}
\theoremstyle{definition}
\newtheorem{definition}{Definition}[section]
\newtheorem{assumption}[definition]{Assumption}
\newtheorem{example}[definition]{Example}
\newtheorem{remark}[definition]{Remark}
\theoremstyle{plain}
\newtheorem{theorem}[definition]{Theorem}
\begin{document}
\title[Computation and stability of waves\\ in equivariant evolution equations]{Computation and stability of waves\\ in equivariant evolution equations}
\setlength{\parindent}{0pt}
\vspace*{0.75cm} 
\begin{center}
\normalfont\huge\bfseries{\shorttitle}\\
\vspace*{0.25cm} 
\end{center}

\vspace*{0.8cm}
\noindent
\begin{minipage}[t]{0.99\textwidth}
  \begin{center}
        {\bf Wolf-J{\"u}rgen Beyn}\footnotemark[1]${}^{,}$\footnotemark[3] 
        \hspace*{1.0cm}
        \textbf{Denny Otten}\footnotemark[2]${}^{,}$\footnotemark[3]\\
        Department of Mathematics, Bielefeld University \\
33501 Bielefeld, Germany
\end{center}
\end{minipage}\\

\footnotetext[1]{e-mail: \textcolor{blue}{beyn@math.uni-bielefeld.de}, \\
  homepage: \url{http://www.math.uni-bielefeld.de/~beyn/AG\_Numerik/}.}
\footnotetext[2]{e-mail: \textcolor{blue}{dotten@math.uni-bielefeld.de}, \\
 homepage: \url{http://www.math.uni-bielefeld.de/~dotten/}.}
\footnotetext[3]{supported by CRC 701 'Spectral Structures and Topological Methods in Mathematics',  Bielefeld University}

\vspace*{0.8cm} 
\noindent
\hspace*{5.4cm}
Date: \today
\normalparindent=12pt

\vspace{1.05cm} 
\noindent
\begin{center}
\begin{minipage}{0.9\textwidth}
  {\small
  \textbf{Abstract.} 

  Travelling and rotating waves are ubiquitous phenomena observed in time dependent
PDEs modelling the combined effect of dissipation and non-linear interaction.
From an abstract viewpoint they appear as relative equilibria of an equivariant 
evolution equation. In numerical computations the freezing method takes advantage 
of this structure by splitting the evolution of the PDE into the dynamics on the 
underlying Lie group and on some reduced phase space. The approach raises a series of questions which were answered to a certain degree by the project: 
linear stability implies non-linear (asymptotic) stability, persistence of
stability under discretisation, analysis and 
computation of spectral structures, first versus second order evolution systems, 
well-posedness of partial differential algebraic equations, spatial decay of wave 
profiles and truncation to bounded domains, analytical and numerical 
treatment of wave interactions, relation to connecting orbits in dynamical systems. A further numerical problem related to this topic will
be discussed, namely  the solution of non-linear eigenvalue problems via
a contour method.}
\end{minipage}
\end{center}
\vspace{0.3cm}



\noindent
\textbf{Key words.} Equivariant evolution equations, relative equilibria,
freezing method, rotating and traveling waves, asymptotic stability, reaction-diffusion systems,
Hamiltonian PDEs, nonlinear eigenvalue problems.
\vspace{0.3cm}

\noindent
\textbf{AMS subject classification.} 37Lxx, 65J08, 74J30, 65L15, 35B40.


%
%
\newpage
\sect[Equivariant Evolution Equations]{Equivariant Evolution Equations}
\label{b3-s1}
\subsection{Abstract Setting}\label{b3-s1.1}
The overall topic of the project is the numerical analysis 
of evolution equations which may be written in the abstract form
\begin{equation} \label{b3-e1}
u_t = F(u), \quad t\ge 0,
\end{equation}
where the solution $u:[0,T) \rightarrow X$, $t \mapsto u(t)$ is defined on
a real interval $[0,T),0 < T \le \infty$ has values in a Banach
space $X$, and time derivative $u_t$. The map $F: Z\subseteq X \rightarrow X$ is a vector field
defined on a dense subspace $Z$ of $X$.
\index{evolution equation} \index{equivariance}
The additional structure is described in terms of a Lie group $G$ 
\index{Lie group} of dimension
$n=\dim(G)< \infty$ which acts on $X$ via a homomorphism into the general linear
group $GL(X)$ of homeomorphisms on $X$:
\begin{equation} \label{b3-e2}
a:G \rightarrow GL(X), \quad \gamma \mapsto a(\gamma).
\end{equation}
For the images we use the synonymous notation $a(g)u=a(g,u),g\in G, u \in X$.

We assume that equation \eqref{b3-e1} is equivariant with respect to this group
action, i.e. the vector field $F$ has the following property
\begin{equation} \label{b3-e3}
F(a(\gamma)u) = a(\gamma)F(u) \quad \forall \gamma \in G \; \forall u \in Z,
\end{equation}
where we have assumed $a(\gamma)Z \subseteq Z$ for all $\gamma \in G$.

In Sections \ref{b3-s2.2} and \ref{b3-s4} we will deal with several classes
of partial differential equations which fit into this general setting.
All of them are formulated for functions on a Euclidean space $\RR^d$ where the action is
caused by the special Euclidean group $SE(d)$ acting via rotations and translations
on their arguments or on their values.
\begin{remark} \label{b3-rem1}
For some applications even this framework is not sufficient. For example,
travelling fronts which have finite but non-zero limits at infinity, do not
lie in any of the usual Lesbesgue or Sobolev spaces, but in an affine
space. To cover such cases, but also more general PDEs on manifolds, one can
generalise the whole approach to Banach manifolds $X$,
where $F$ is a vector field defined on a submanifold $Z$ of $X$ mapping into the tangent bundle $TX$, 
and $a$ takes values in the space of diffeomorphisms $\mathrm{Diff}(X,X)$.
Equivariance \eqref{b3-e3} is then expressed as $F(a(\gamma,u))=d_u[a(\gamma,u)]F(u)$,
where $d_u[a(\gamma,u)]:T_uX \rightarrow T_{a(\gamma,u)}X$ denotes the tangent map.
For the sake of simplicity we will not pursue this generalisation
here (see \cite{b3-R10}).
\end{remark}

For this article it is sufficient to work with a simple notion of a strong solution of a Cauchy
problem instead of dealing with weak solutions in time and mild solutions
in space.
\begin{definition} \label{b3-d1}
A function $u\in C^1([0,T),X) \cap C([0,T),Z)$ satisfying
\begin{equation} \label{b3-e4} u_t = F(u), t \in [0,T), \quad u(0)=u_0 \in Z,
\end{equation}
is called a strong solution of the Cauchy problem \eqref{b3-e4}.
\end{definition}
\index{strong solution}
In the following we will always assume that a strong solution of \eqref{b3-e4}
exists locally, i.e. on some interval $[0,T), T>0$, and that it is unique.
For applications to PDEs it is typical that the group action is only
differentiable for smooth functions. Therefore, we impose the following condition.  
\begin{assumption} \label{b3:a1}
For any $u \in X$, resp. $u\in Z$ the mapping 
\begin{equation*} \label{b3-e5}
a(\cdot)u: G \rightarrow X , \quad \gamma \mapsto a(\gamma)u
\end{equation*}
is continuous resp. continuously differentiable with derivative
\begin{equation} \label{b3-e6}
d_{\gamma}[a(\gamma)u]: T_{\gamma}G \rightarrow X, \quad 
\mu \mapsto d_{\gamma}[a(\gamma)u] \mu.
\end{equation}
\end{assumption}
In case $\gamma = \one$ the tangent space $T_{\one}G$ may be identified with
the Lie algebra $\fg$ of $G$, and we have $d_{\gamma}[a(\one)u]:\fg \rightarrow
X$.
\index{Lie algebra}

\subsection{Relative Equilibria}
\label{b3-s1.2}
Relative equilibria are special solutions of \eqref{b3-e1} which lie
in the group orbit of a single element.
\begin{definition} \label{b3-d2}
A pair $v_{\star}\in Z, \gamma_{\star}\in C^1([0,\infty),G)$ is called a relative
equilibrium of \eqref{b3-e1} if $\gamma_{\star}(0)=\one $ and $u_{\star}(t) =a(\gamma_{\star}(t))v_{\star},
t \ge 0$ is a strong solution of \eqref{b3-e4} with $u_0=v_{\star}$.
\end{definition}
\index{relative equilibrium}
In some references (see e.g. \cite{b3-CL00}) the whole group orbit
$\cO_G(v_{\star})= \{ a(g)v_{\star}:g \in G \}$ is called a relative equilibrium.
However, we include the path $t \rightarrow \gamma_{\star}(t)$ on the group
as part of our definition since it will be relevant for both the stability analysis
and numerical computations. The following theorem shows that the path may always be written
as $\gamma_{\star}(t) =\exp(t \mu_{\star})$ for some $\mu_{\star} \in \fg$.
Recall that $\exp:\fg \rightarrow G$ is the exponential function and that
$\gamma_{\star}(t)=\exp(t \mu_{\star}), t \in \RR$ is the unique solution of
the Cauchy problem
\begin{equation} \label{b3-e6a}
\gamma_{\star}'(t) = dL_{\gamma_{\star}(t)}(\one) \mu_{\star}, \quad \gamma_{\star}(0)=\one,
\end{equation}
where $L_{\gamma}g= \gamma \circ g,g\in G$ denotes the multiplication by
$\gamma$ from the left.
The vector field on the right-hand side of \eqref{b3-e6a} is often simply
written as $\gamma_{\star}(t) \mu_{\star}$, but in analogy to \eqref{b3-e6}
we keep the slightly clumsier
notation $dL_{\gamma_{\star}(t)}(\one) \mu_{\star}$ for clarity. 
\begin{theorem} \label{b3:t1} Let Assumption \ref{b3:a1} hold. Then
for every relative equilibrium $v_{\star}\in Z, \gamma_{\star}\in C^1([0,\infty),G)$ there exists $\mu_{\star}\in \fg$ such that
\begin{align} \label{b3-e7}
0= & F(v_{\star})- d_{\gamma}[a(\one)v_{\star}] \mu_{\star} \\
\label{b3-e8}
a(\gamma_{\star}(t))v_{\star}= & a(\exp(t \mu_{\star}))v_{\star}.
\end{align}
Conversely, let $v_{\star}\in Z, \mu_{\star} \in \fg$ solve \eqref{b3-e7},
then $v_{\star}$ and $\gamma_{\star}(t) = \exp(t \mu_{\star}),t\ge 0$ are a relative
equilibrium.
\end{theorem}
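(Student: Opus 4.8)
The plan is to prove both implications through a single computation: the time derivative of the group-orbit map $t\mapsto a(\gamma(t))v_\star$ can always be rewritten, using the homomorphism property of $a$, as $a(\gamma(t))$ applied to an infinitesimal action at the identity. Equivariance \eqref{b3-e3} then converts the evolution equation into the algebraic relation \eqref{b3-e7}, and uniqueness of strong solutions forces \eqref{b3-e8}.

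First I would establish the forward direction. For a relative equilibrium, $u_\star(t)=a(\gamma_\star(t))v_\star$ is by definition a strong solution, so it is differentiable in $X$ with $u_\star'(t)=F(u_\star(t))$. Differentiating $u_\star(t)=a(\gamma_\star(t))v_\star$ by the chain rule and Assumption \ref{b3:a1} gives $u_\star'(t)=d_\gamma[a(\gamma_\star(t))v_\star]\gamma_\star'(t)$. Evaluating at $t=0$, where $\gamma_\star(0)=\one$ and hence $\gamma_\star'(0)\in T_\one G\cong\fg$, and setting $\mu_\star:=\gamma_\star'(0)$, I obtain
\[ F(v_\star)=u_\star'(0)=d_\gamma[a(\one)v_\star]\mu_\star, \]
which is precisely \eqref{b3-e7}. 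The central ingredient for \eqref{b3-e8} is the identity
\[ d_\gamma[a(\gamma)u]\bigl(dL_\gamma(\one)\eta\bigr)=a(\gamma)\,d_\gamma[a(\one)u]\eta,\quad \gamma\in G,\ \eta\in\fg,\ u\in Z, \]
which I would prove by choosing the curve $s\mapsto\gamma\exp(s\eta)$, whose tangent at $s=0$ is $dL_\gamma(\one)\eta$, writing $a(\gamma\exp(s\eta))u=a(\gamma)a(\exp(s\eta))u$ via the homomorphism property, and pulling the bounded linear operator $a(\gamma)$ out of the $s$-derivative.

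Now define $w(t):=a(\exp(t\mu_\star))v_\star$. Using \eqref{b3-e6a}, namely $\tfrac{d}{dt}\exp(t\mu_\star)=dL_{\exp(t\mu_\star)}(\one)\mu_\star$, together with the identity above, I get $w'(t)=a(\exp(t\mu_\star))\,d_\gamma[a(\one)v_\star]\mu_\star$. On the other hand, equivariance \eqref{b3-e3} with $\gamma=\exp(t\mu_\star)$ combined with \eqref{b3-e7} yields $F(w(t))=a(\exp(t\mu_\star))F(v_\star)=a(\exp(t\mu_\star))\,d_\gamma[a(\one)v_\star]\mu_\star$. Hence $w'(t)=F(w(t))$ and $w(0)=v_\star$, so $w$ is a strong solution of \eqref{b3-e4} with the same initial value as $u_\star$. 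By the assumed uniqueness of strong solutions, $w\equiv u_\star$, which is exactly \eqref{b3-e8}. The converse runs along the same lines: starting from a solution $(v_\star,\mu_\star)$ of \eqref{b3-e7}, the very computation just performed shows that $u_\star(t):=a(\exp(t\mu_\star))v_\star$ satisfies $u_\star'=F(u_\star)$ and $u_\star(0)=v_\star$, so $(v_\star,\gamma_\star)$ with $\gamma_\star(t)=\exp(t\mu_\star)$ is a relative equilibrium.

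I expect the main obstacle to be a matter of regularity rather than algebra: to call $w$ a strong solution I must check $w(t)\in Z$ and $w\in C^1([0,\infty),X)\cap C([0,\infty),Z)$. Membership $w(t)\in Z$ follows from the standing hypothesis $a(\gamma)Z\subseteq Z$, and differentiability into $X$ is provided by Assumption \ref{b3:a1}; the continuity of $t\mapsto w(t)$ into $Z$ is the delicate point, since Assumption \ref{b3:a1} only guarantees continuity into $X$, and it must be argued with some care. Everything else reduces to the chain rule and the homomorphism property of the action.
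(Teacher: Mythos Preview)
The paper does not actually prove this theorem: Theorem~\ref{b3:t1} is stated and then followed only by the discussion of uniqueness via the stabiliser \eqref{b3-e8a} and the family \eqref{b3-e8b}, with no proof environment. So there is no argument in the paper to compare against.

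That said, your argument is the natural one and is correct in substance. Setting $\mu_\star=\gamma_\star'(0)$ and evaluating $u_\star'(0)=F(v_\star)$ gives \eqref{b3-e7} at once. The left-translation identity
\[
d_\gamma[a(\gamma)u]\bigl(dL_\gamma(\one)\eta\bigr)=a(\gamma)\,d_\gamma[a(\one)u]\eta
\]
is exactly the right tool, and your derivation of it from the homomorphism property and the curve $s\mapsto\gamma\exp(s\eta)$ is clean. Combining it with \eqref{b3-e6a} and equivariance shows that $w(t)=a(\exp(t\mu_\star))v_\star$ solves the same Cauchy problem as $u_\star$, and the uniqueness of strong solutions---which the paper explicitly assumes after Definition~\ref{b3-d1}---yields \eqref{b3-e8}. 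The converse is the same calculation.

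Your caveat about $C([0,\infty),Z)$-regularity of $w$ is well taken and is the only genuine subtlety. Assumption~\ref{b3:a1} provides $C^1$-dependence into $X$, and the standing hypothesis $a(\gamma)Z\subseteq Z$ is purely set-theoretic; neither gives continuity of $\gamma\mapsto a(\gamma)v_\star$ as a map into $Z$. In the concrete PDE examples of Sections~\ref{b3-s1.3} and~\ref{b3-s4} this can be verified directly, but in the abstract setting as written it is an additional regularity requirement. Flagging it, as you do, is appropriate rather than a defect of your argument.
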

Given $v_{\star}$ and $\gamma_{\star}(\cdot)$, then uniqueness of $\mu_{\star}$ will follow from \eqref{b3-e8} in the first part of the theorem if 
the stabiliser $H(v_{\star})$ of $v_{\star}$ is simple, i.e.
 \begin{equation}\label{b3-e8a}
H(v_{\star})= \{\gamma\in G:a(\gamma)v_{\star}=v_{\star}\} =\{\one\}.
\end{equation} \index{stabiliser} 
\noindent
But even then, equation \eqref{b3-e7} does not determine the pair 
$(v_{\star},\mu_{\star})$
uniquely, since relative equilibria always come in families. More precisely,
Definition \ref{b3-d2} and the equivariance \eqref{b3-e3} show that any relative equilibrium
$v_{\star},\gamma_{\star}$ of \eqref{b3-e4} is accompanied by a family 
 $(w(g),\gamma(g,\cdot)), g \in G$ of relative equilibria given by
\begin{equation} \label{b3-e8b}
 w(g)=a(g)v_{\star}, \quad \gamma(g,t)= g \circ \gamma_{\star}(t) \circ g^{-1}, \quad g \in G,
\end{equation}
see \cite{b3-CS07} for related results. This will be important for the
stability analysis in Section \ref{b3-s3}.

\subsection{Wave Solutions of PDEs}
\label{b3-s1.3}
Two important classes of semi-linear evolution
equations which fit into the above setting and to which our results apply, are
the following
\begin{equation} \label{b3-e9}
u_t = Au_{xx} + f(u,u_x), \quad u(x,t) \in \RR^m, \quad  x \in \RR, t \ge 0,
\quad u(\cdot,0)=u_0,
\end{equation}
\begin{equation} \label{b3-e10}
u_t = A \Delta u + f(u), \quad u(x,t) \in \RR^m, \quad x \in \RR^d, t\ge 0,
\quad u(\cdot,0)=u_0.
\end{equation}
In both cases $A\in \RR^{m,m}$ is assumed to have spectrum $\sigma(A)$ with
$\Real(\sigma(A))\ge 0$. Note that $\Real(\sigma(A))>0$ leads to parabolic
systems while $\sigma(A)\subseteq i \RR$  occurs for
Hamiltonian PDEs. Intermediate cases with $\sigma(A) \subset (\{0\} \cup
\{\Real z >0\})$ generally belong to hyperbolic or parabolic-hyperbolic
mixed systems.
\index{parabolic equation} \index{Hamiltonian PDE} \index{parabolic-hyperbolic equation}
The non-linearities $f:\RR^{2m} \rightarrow \RR^m$ in \eqref{b3-e9} resp.
$f:\RR^{m} \rightarrow \RR^m$ in \eqref{b3-e10} are assumed to be
sufficiently smooth and to satisfy $f(0,0)=0$ resp. $f(0)=0$.

In case of \eqref{b3-e9} the Lie group is  $(G,\circ)=(\RR,+)$ acting on $X=L^2(\RR,\RR^m)$
by the shift $[a(\gamma)u](x) = u(x-\gamma), x\in \RR, u\in X$. With
$F(u)=Au_{xx}+f(u,u_x)$ for $u \in Z = H^2(\RR,\RR^m)$, equivariance is easily
verified and $F(u) \in X$ follows from the Sobolev embedding 
$H^1(\RR,\RR^m) \subseteq L^{\infty}(\RR,\RR^m)$ and $f(0,0)=0$. For the
derivative we find
\begin{equation*} \label{b3-e11}
d_{\gamma}[a(\one) v] \mu= -v_x \mu, \quad \mu \in \fg=\RR, \quad v\in H^1(\RR,\RR^m).
\end{equation*}
Relative equilibria then turn out to be travelling waves
\begin{equation} \label{b3-e10a}
u_{\star}(x,t) =v_{\star}(x- \mu_{\star}t), x\in \RR,t\ge 0,
\end{equation}
where the pair $(v_{\star},\mu_{\star})$ solves the second order system
from \eqref{b3-e7}
\begin{equation*} \label{b3-e12}
0= Av_{\star,\xi \xi}+\mu_{\star}v_{\star,\xi} + f(v_{\star},v_{\star,\xi}), \quad
v(\xi) \in \RR^m, \xi \in \RR.
\end{equation*}
In fact, our simplified abstract approach only covers pulse solutions
(defined by $v_{\star}(\xi),v_{\star,\xi}(\xi) \rightarrow 0$ as $\xi \rightarrow \pm
\infty$), whereas fronts need the setting of manifolds,
see Remark \ref{b3-rem1}.

In the multi-dimensional case \eqref{b3-e10} the phase space is
$X=L^{2}(\RR^d,\RR^m)$, and we aim at equivariance w.r.t.
the special Euclidean group $G =\mathrm{SE}(d)=\mathrm{SO}(d) \ltimes \RR^d$.
It is convenient to  represent
$\mathrm{SE}(d)$ in $GL(\RR^{d+1})$ as 
\begin{equation} \label{b3-e13}
\mathrm{SE}(d) = \left\{ \begin{pmatrix} Q & b \\ 0 & 1 \end{pmatrix}:
Q \in \RR^{d,d}, Q^{\top}Q=I_d, \det(Q)=1, b \in \RR^d \right\},
\end{equation}
where the group operation is matrix multiplication. We represent the Lie algebra
$\mathfrak{se}(d)=\mathfrak{so}(d)\times \RR^d$  accordingly
\begin{equation} \label{b3-e14}
\mathfrak{se}(d)= \left\{\begin{pmatrix} S & a \\ 0 & 0 \end{pmatrix}:
S \in \RR^{d,d}, S^{\top}=-S, a \in \RR^d \right\}.
\end{equation}
The action on
functions $u \in X$ is defined by
\begin{equation*} \label{b3-e15}
[a(\gamma)u](x) = u(Q^{\top}(x-b)), \quad x \in \RR^d, \gamma=\begin{pmatrix} Q & b \\ 0 & 1 \end{pmatrix} \in \mathrm{SE}(d).
\end{equation*}
The derivative exists for functions
$u \in H^1_{\mathrm{Eucl}}(\RR^d,\RR^m)$ where for $k \ge 1$
\begin{equation*} \label{b3-e16}
H^k_{\mathrm{Eucl}}(\RR^d,\RR^m)=\left\{ u \in H^k(\RR^d,\RR^m):
  \cL_S u\in L^2(\RR^d,\RR^m)\; \forall
S \in \mathfrak{so}(d) \right\},
\end{equation*}
and
\begin{equation*} \label{b3-e15a}
L_Su(x) := u_x(x) S x = \sum_{j,k=1}^d D_ju(x) S_{j,k} x_k,
\quad x \in \RR^d.
\end{equation*}
The derivative of the group action is then given by
\begin{equation*} \label{b3-e16a}
\left(d_{\gamma}[a(\one)v]\mu\right)(x) = -v_x(x)(Sx+ c), \quad x \in \RR^d, \quad
\mu =\begin{pmatrix} S & c \\ 0 & 0 \end{pmatrix} \in \mathfrak{se}(d).
\end{equation*}
Note that the first order operator $\cL_S$ has unbounded coefficients 
and that  the norm in $H^k_{\mathrm{Eucl}}$ is given by
\begin{equation*} \label{b3-e17}
\|u\|^2_{H^k_{\mathrm{Eucl}}}= \|u\|^2_{H^k} + \sup\{
\|\cL_S u \|^2_{L^2}: S \in \mathfrak{so}(d), |S|=1 \}.
\end{equation*}
Setting $Z=H^2_{\mathrm{Eucl}}(\RR^d,\RR^m)$  one
finds $F(u)=A \Delta u + f(u) \in X$ for $u \in Z$ in dimension
$d=2$, since $H^2(\RR^2,\RR^m) \subset L^{\infty}(\RR^2, \RR^m)$ by
Sobolev embedding. But for  $d \ge 3$ one has to impose growth conditions
on $f$ to ensure this. Equivariance follows from the equivariance of the Laplacian
under Euclidean transformations.
Special types of relative equilibria are waves rotating about a centre
$x_{\star} \in \RR^d$:
\begin{equation} \label{b3-e18}
u_{\star}(x,t) = v_{\star}(\exp(-t S_{\star})(x-x_{\star})) ,\quad v_{\star} \in Z,
S_{\star} \in \mathfrak{so}(d).
\end{equation}
 When substituting  $\xi=\exp(-t S_{\star})(x-x_{\star})$ the system \eqref{b3-e7} reads
\begin{equation*} \label{b3-e19}
0= A \Delta v_{\star} + v_{\star,\xi}S_{\star}\xi + f(v_{\star}), \quad \xi \in \RR^d.
\end{equation*}
Several examples of travelling and rotating waves will be dealt with
in Section \ref{b3-s4}.

\sect{The Freezing Method}
\label{b3-s2}
\subsection{The abstract approach}
\label{b3-s2.1}
The idea of the freezing method, set out in \cite{b3-RKML03},\cite{b3-BT04}, is 
to separate the strong solutions of the Cauchy problem \eqref{b3-e4} into a motion
on the group $G$ and on a reduced phase space, just as for the relative
equilibria in Definition \ref{b3-d2}:
\begin{equation} \label{b3-e20}
u(t) = a(\gamma(t))v(t), \quad t \ge 0.
\end{equation}
Let $\gamma\in C^1([0,T),G), \gamma(0)=\one$ and let $u$ be a strong solution of \eqref{b3-e4} and define
$\mu(t):=(dL_{\gamma(t)}(\one))^{-1}\gamma_t(t) \in \fg$ in the Lie algebra
$\fg$ of $G$, then $\gamma,v$ solve the system
\begin{align} 
\label{b3-e21}
v_t(t)=& F(v(t)) - d_{\gamma}[a(\one)v(t)] \mu(t),  & v(0)= u_0,\\
\label{b3-e22}
\gamma_t(t) =& dL_{\gamma(t)}(\one) \mu(t), & \gamma(0)=\one.
\end{align}
Conversely, one can show that a strong solution $u\in C^1([0,T),X) \cap C([0,T),Z)$, $\mu \in  C([0,T),\fg)$, $\gamma \in C^1([0,T),G)$ of 
\eqref{b3-e21},\eqref{b3-e22} leads to a strong solution of \eqref{b3-e4}
via \eqref{b3-e20}.
According to Theorem  \ref{b3:t1} a relative equilibrium $v_{\star},\mu_{\star}$
of \eqref{b3-e1}  is a steady state of the first equation \eqref{b3-e21}.
Following \cite{b3-RKML03}, we call equation \eqref{b3-e22} the reconstruction
equation. 
\index{reconstruction equation}
Due to the extra variables $\gamma\in G$ resp. $\mu\in \fg$, the system \eqref{b3-e21}, \eqref{b3-e22} is not  yet well posed, but needs 
 $n=\dim(G)$ additional algebraic constraints
(called phase conditions) which we write as
\begin{equation} \label{b3-e23}
\psi(v,\mu) = 0. 
\end{equation}
\index{phase condition}
Here $\psi:X\times \fg \rightarrow \fg^{\star}$ (the dual of $\fg$) is a smooth map typically
derived as a necessary condition from a minimisation principle. For example, if $(X,\langle\cdot,\cdot\rangle)$
 is a Hilbert space one can require the distance $\inf_{g \in G}\|v -a(g)\hat{v}\|$
 to the group orbit of a template function $\hat{v} \in X$ (such as $\hat{v}=u_0$) to be minimal at $g=\one$. For $\hat{v} \in Z$ this 
leads to the fixed phase condition
\begin{equation} \label{b3-e24}
\psi_{\mathrm{fix}}(v,\mu) \nu = \langle d_{\gamma}[a(\one)\hat{v}]\nu, v- \hat{v} \rangle=0, \quad \forall \nu \in \fg.
\end{equation}
\index{phase condition!fixed}
An alternative is to minimise $\|v_t\|^2=\|F(v) - d_{\gamma}[a(\one)v] \mu\|^2$ 
with respect to $\mu$ at each time instance, resulting in the orthogonality condition
\begin{equation} \label{b3-e25}
\psi_{\mathrm{orth}}(v,\mu) \nu = \langle d_{\gamma}[a(\one)v]\nu, 
F(v) - d_{\gamma}[a(\one)v] \mu\ \rangle=0
\quad \forall \nu \in \fg.
\end{equation}
This condition requires the group orbit of $v(t)$  to be tangent to 
its time derivative at each time instance. Altogether, equations \eqref{b3-e21} and \eqref{b3-e23}
constitute a partial differential algebraic equation (PDAE) for the functions
$v$ and $\mu$. The reconstruction equation \eqref{b3-e22} decouples from the
PDAE and may be solved in a post-processing step. Condition \eqref{b3-e25} has
a unique solution $\mu$ if $d_{\gamma}[a(\one)v]: \fg \rightarrow X$ is one to one
and then leads to a PDAE of (differentiation) index $1$. Condition 
\eqref{b3-e24} leads to an index $2$ problem, but can be reduced to index $1$ 
by differentiating with respect to $t$ and then inserting \eqref{b3-e21}.

\subsection{Application to Evolution Equations}
\label{b3-s2.2}
In this section we take a closer look at the PDAEs that arise from the freezing
method when applied to the two equations \eqref{b3-e9} and \eqref{b3-e10}.
In Section \ref{b3-s4} we will provide a series of numerical examples
and also discuss the influence of both spatial and temporal discretisation
errors. In the following we restrict to the fixed phase condition
\eqref{b3-e24} which is particularly well-suited near a relative equilibrium
and which admits rather general stability results, see Section \ref{b3-s3}. 
On the other hand the orthogonal phase condition needs no pre-information
and hence can be applied far away from any relative equilibrium. However,
its stability properties are questionable and have only been investigated
in a special case, see \cite{b3-BT07}.

For the one-dimensional system \eqref{b3-e9} with shift equivariance the
freezing ansatz simply reads
\begin{equation} \label{b3-e2.2.1}
u(x,t) = v(x - \gamma(t), t), \quad x\in \RR, t\ge 0, \quad \mu(t) = \gamma_t(t),
\end{equation}
and the corresponding PDAE is given by (cf. \cite{b3-T05})
\begin{equation}\label{b3-equ:s4.1.3}
\begin{aligned}
  v_t &= Av_{\xi\xi} + \mu v_{\xi} + f(v,v_{\xi}), &&v(\cdot,0)=u_0, \\
  0 &= \langle \hat{v}_{\xi},v-\hat{v}\rangle_{L^2(\mathbb{R},\mathbb{R}^m)},\\
  \gamma_t &=\mu, &&\gamma(0)=0,
  \end{aligned}
  \end{equation}
for the unknown quantities $(v,\mu,\gamma)$.
For initial data $u_0$ close to a wave we expect
$v(\cdot,t)\to v_{\star}$, $\mu(t)\to\mu_{\star}$ as $t\to\infty$. Travelling waves
in parabolic  systems  and their stability are analysed in
\cite{b3-H81,b3-S02,b3-VVV94,b3-T05}, and numerical applications of the freezing
method for this case appear in in \cite{b3-BOR14}.

Next, consider the parabolic system \eqref{b3-e10} in several space dimensions.
With the special Euclidean group \eqref{b3-e13} and its Lie algebra \eqref{b3-e14} the freezing system \eqref{b3-e21},\eqref{b3-e22} takes the form 
\begin{equation}\label{b3-equ:s4.1.7}
\begin{aligned}
  v_t &= Av_{\xi\xi} + v_{\xi}(S\xi + c) + f(v), 
  &&v(\cdot,t_0)=u_0, \\
  0 &= \langle \xi_j\hat{v}_{\xi_i} - \xi_i\hat{v}_{\xi_j},v-\hat{v}\rangle_{L^2}, 
  && 0=\langle \hat{v}_{\xi_l},v-\hat{v}\rangle_{L^2}, \\
  \left(\begin{smallmatrix}Q&b\\0&1\end{smallmatrix}\right)_t &= \left(\begin{smallmatrix}Q&b\\0&1\end{smallmatrix}\right)\left(\begin{smallmatrix}S&c
  \\0&0\end{smallmatrix}\right), 
  &&\left(\begin{smallmatrix}Q(t_0)&b(t_0)\\0&1\end{smallmatrix}\right)=I_{d+1},
\end{aligned}
\end{equation}
for the unknown quantities $\left(v,\mu=\left(\begin{smallmatrix}S&c\\0&0\end{smallmatrix}\right),\gamma=\left(\begin{smallmatrix}Q&b\\0&1\end{smallmatrix}\right)\right)$ and indices $1 \le i < j \le d$, $1 \le l \le d$. Since $S(t)$ is skew-symmetric it is sufficient to work with $S_{ij}$, 
$i=1,\ldots,d-1$, $j=i+1,\ldots,d$ and  $c\in \RR^d$ when solving
the reconstruction equation. Numerical methods for differential
equations  on Lie groups may be found in \cite{b3-HLW06}.
If the initial data are close to a stable rotating wave \eqref{b3-e18}
we expect $v(\cdot,t)\to v_{\star}$ and $\left(\begin{smallmatrix}S&c\\0&0\end{smallmatrix}\right)=\mu(t)\to\mu_{\star}=\left(\begin{smallmatrix}S_{\star}& c_{\star} 
\\0&0\end{smallmatrix}\right)\in\mathrm{se}(d)$ as $t\to\infty$. 
Rotating waves in parabolic systems are treated in \cite{b3-FSSW96,b3-FS03}, their non-linear stability (for $d=2$) in \cite{b3-BL08}, and numerical examples in \cite{b3-O14}.
Essential steps for extending non-linear stability to higher space dimensions are done in \cite{b3-BO16a,b3-BO16b, b3-BO18}, which is based on 
previous works \cite{b3-O14,b3-O15,b3-O16a,b3-O16b}.

\subsection{Dynamic Decomposition of Multi-Waves}
\label{b3-s2.3}

Consider a simplified parabolic system \eqref{b3-e9} in one space dimension
\begin{align}
  \label{b3-e2.3.1}
  u_t = Au_{xx} + f(u), &&u(\cdot,0)=u_0,
\end{align}
under the assumptions of Section \ref{b3-s1.3}. Suppose this system
admits several travelling waves $(v_{\star,j},\mu_{\star,j}), j=1,\ldots,N$
with different speeds $\mu_{\star,j}$ and limit behaviour $\lim_{\xi\to\pm\infty}v_{\star,j}(\xi)=v_j^{\pm}$
for $j=1,\ldots,N$. If the limits fit together, i.e. if
\begin{align*}
v^+_{j}= v^-_{j+1}, \quad j=1,\ldots,N-1,
\end{align*}
then one often observes $N$-waves (or multi-waves) of \eqref{b3-e2.3.1}
which look like linear superpositions  of the waves
$v_{\star,j}(x-\mu_{\star,j}t), j=1,\ldots,N$, see for example
 Figures \ref{b3-fig:QN2F.c}, \ref{b3-fig:QN2F.d} for two fronts
from example \eqref{b3-e3.1.1} travelling at different speeds to the left and superimposed onto each other. {\it Strong interaction} occurs when two or several
fronts move towards each other, while all other cases are called {\it weak interactions}. Many more interaction phenomena of this type may be found in \cite{b3-WIN16} and the references  therein.
\index{interaction!strong} \index{interaction!weak}

In \cite{b3-BOR14,b3-S09} we extend the freezing method in order to handle
such interactions. More precisely, we generalise \eqref{b3-e2.2.1} to
\begin{equation} \label{b3-e2.3.2}
u(x,t)=  \sum_{j=1}^{N} v_j(x-\gamma_j(t),t),
\end{equation}
where the values of $\gamma_{j}:\RR \rightarrow \RR$ denote the time-dependent
position of the $j$-th profile $v_j:\RR \rightarrow \RR^m$ which we expect
to have limits 
\begin{equation*}
\lim_{\xi \rightarrow \pm\infty}v_{j}(\xi,t) =v_j^{\pm}- w_j^-, \quad
w_j^-=  \begin{cases} 0, & j=1,\\ v_j^- & j \ge 2. \end{cases}
                    \end{equation*}
The main idea is to combine \eqref{b3-e2.3.2} with
a dynamic partition of unity
\begin{equation*}Q_j(\gamma(t),x) = \frac{\varphi(x-\gamma_j(t))}
{\sum_{k=1}^N \varphi(x - \gamma_k(t))}, \quad j=1,\ldots, N,
\end{equation*}
where $\varphi\in C^{\infty}(\RR,(0,1])$ is a mollifier function, for example
$\varphi(x) =\mathrm{sech}(\beta x)$ for some $\beta >0$.
Using \eqref{b3-e2.3.2} in \eqref{b3-e2.3.1} and abbreviating
$v_k(\star)=v_k(\cdot - \gamma_k(t),t)$, one finds
\begin{equation*} \label{b3-e2.3.3}
\begin{aligned}
\sum_{j=1}^N& \left[v_{j,t}(\star)-v_{j,\xi}(\star) \gamma_{t,j}\right] = u_t =
Au_{xx}+ f(u) \\
= & \sum_{j=1}^N \Big[ Av_{j,\xi \xi}(\star) + f(v_j(\star)+w_j^-) \Big.\\
+& \Big. Q_j(\gamma,\cdot) \Big\{ f\Big( \sum_{k=1}^N v_k(\star)\Big) -
\sum_{k=1}^N f(v_k(\star)+w_k^-) \Big\} \Big].
\end{aligned}
\end{equation*}
Equating the terms inside brackets $\big[ \cdot \big]$ on both sides,
substituting $\xi=x- \gamma_j(t)$ and adding phase conditions and initial conditions
leads to the following {\it decompose and freeze} system (see \cite{b3-BOR14,b3-S09})
\begin{equation} \label{b3-e2.3.4}
\begin{aligned}
  v_{j,t}(\xi,t)= &  Av_{j,\xi\xi}(\xi,t) + v_{j,\xi}(\xi,t)\mu_j(t) + f(v_j(\xi,t)) \\
 + & \frac{\varphi(\xi)}{\sum_{k=1}^{N}\varphi(*_{kj})}
  \Big[f\Big(\sum_{k=1}^{N}v_k(*_{kj},t)\Big)-\sum_{k=1}^{N}f(v_k(*_{kj},t)+w_k^-)
  \Big], \\
  0 = &  \left(v_j(\cdot,t)-\hat{v}_j,\hat{v}_{j,\xi}\right)_{L^2}
  \text{, }\quad v_{j}(\cdot,0)=v_j^{0}, \\
  \gamma_{j,t}=  & = \mu_j\text{, }\qquad\qquad\qquad\quad\quad\;\;\, \gamma_j(0) = \gamma_{j}^{0}. 
\end{aligned}
\end{equation}
This is an $N$-dimensional PDAE to be solved for $(v_j,\mu_j,\gamma_j)$, $j=1,\ldots,N$, where 
\begin{equation*}
  \label{b3-equ:s4.1.14}
  *_{kj}=\xi-\gamma_k(t)+\gamma_j(t),\quad
  \varphi\in C^{\infty}(\mathbb{R},(0,1]),\quad  u_0 = \sum_{j=1}^{N}v_j^0(\cdot-\gamma_j^0).
\end{equation*}
A particular difficulty of this system is that the right-hand side contains
non-local terms $v_k(\star_{kj},t)$ which need special treatment when
discretised on bounded intervals $[x_-,x_+]$, see Section \ref{b3-s4.6}.
We also mention that the stability of this approach for weak interaction
is analysed in \cite{b3-BOR14,b3-S09} and that a generalisation of the decompose
and freeze method to the abstract
framework of Section \ref{b3-s2.1} is proposed and applied in
\cite{b3-BOR14,b3-BST08,b3-O14}, see also Section \ref{b3-s4.6}.




\sect[Applications]{Applications to Parabolic, Hyperbolic,  and
Hamiltonian Systems}
\label{b3-s4}
\subsection[Parabolic Systems]{Travelling and Rotating Waves in Parabolic  Systems}
\label{b3-s4.1}

Our first numerical example deals with a scalar parabolic equation \eqref{b3-e2.3.1}
related to the classical Nagumo equation with a cubic non-linearity.
\begin{example}[Quintic Nagumo equation]\label{b3-exa:s4.1.1}
In the scalar case $m=1$ with ${A=1}$,
\begin{align}
  \label{b3-e3.1.1}
  f(u,u_x) = -\prod_{i=1}^{5}(u-b_i),\quad b_i\in\mathbb{R},\quad 0=b_1<b_2<b_3<b_4<b_5=1.
\end{align}
Equation \eqref{b3-e3.1.1} is called the quintic Nagumo equation (short: QNE), \cite{b3-BOR14}. 

\begin{figure}[ht]
  \centering
  \subfigure[]{\includegraphics[width=0.25\textwidth] {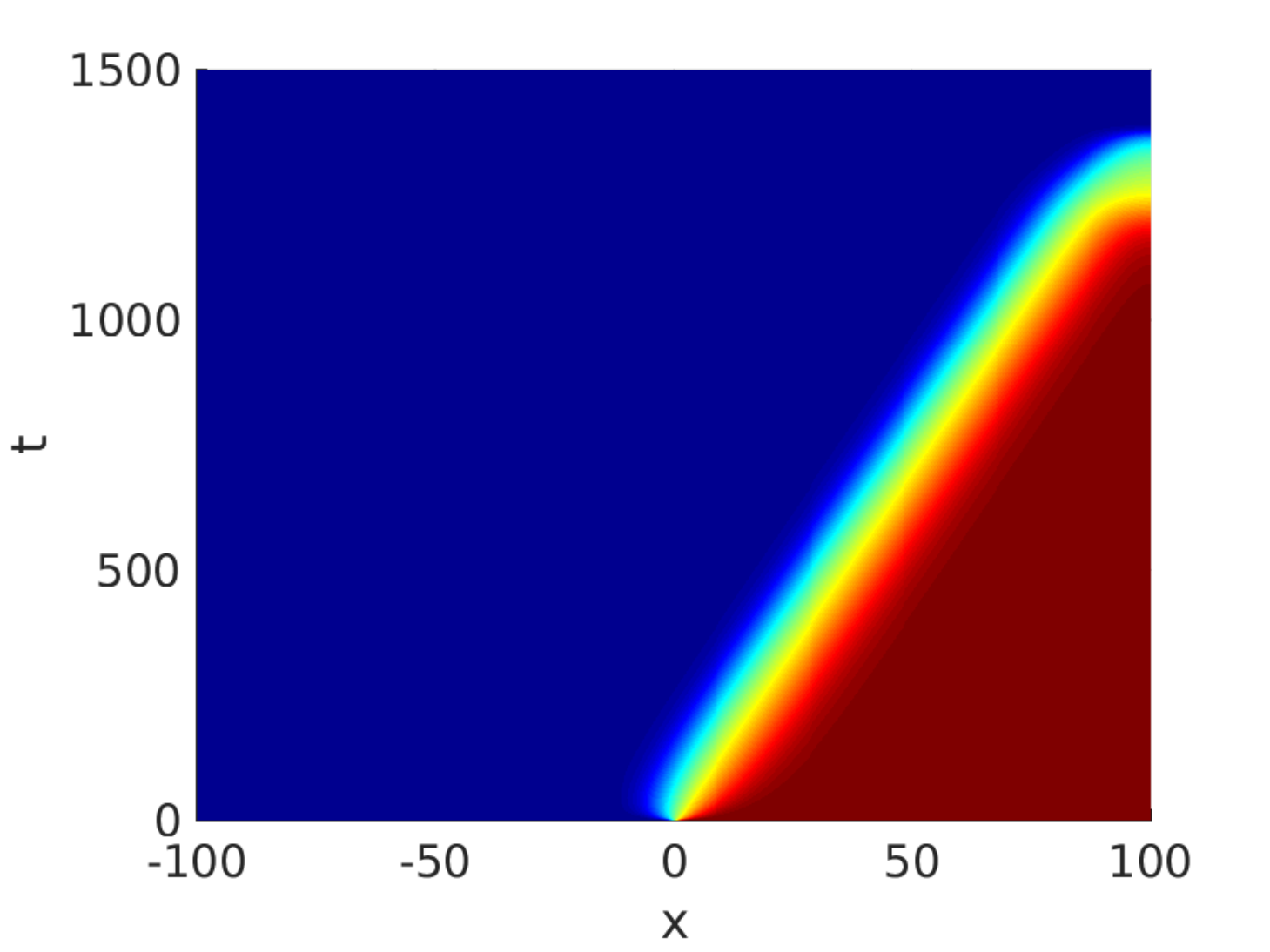}\label{b3-fig:QN.a}}
  \hspace*{-0.45cm}
  \subfigure[]{\includegraphics[width=0.25\textwidth] {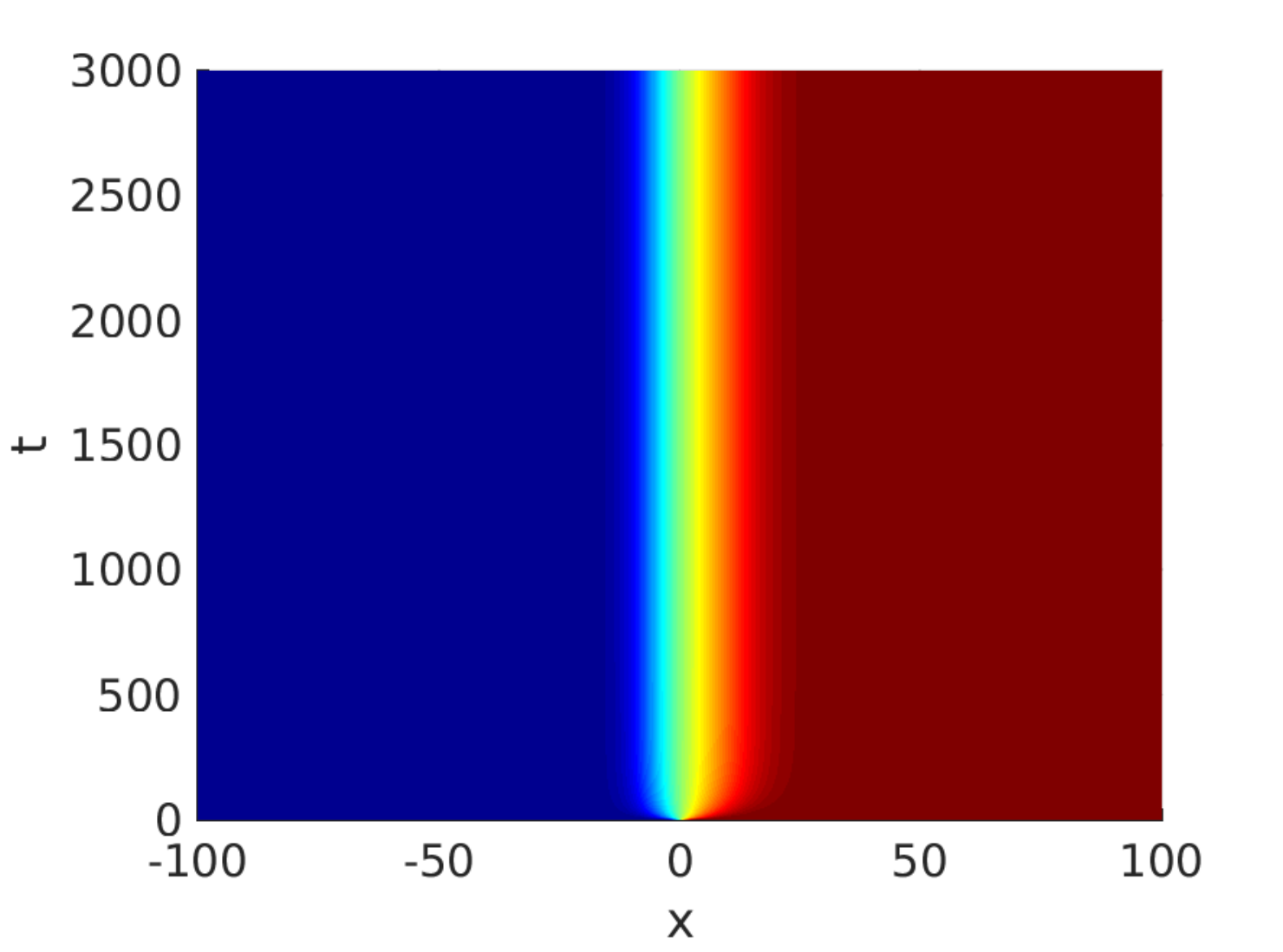} \label{b3-fig:QN.b}}
  \hspace*{-0.45cm}
  \subfigure[]{\includegraphics[width=0.25\textwidth] {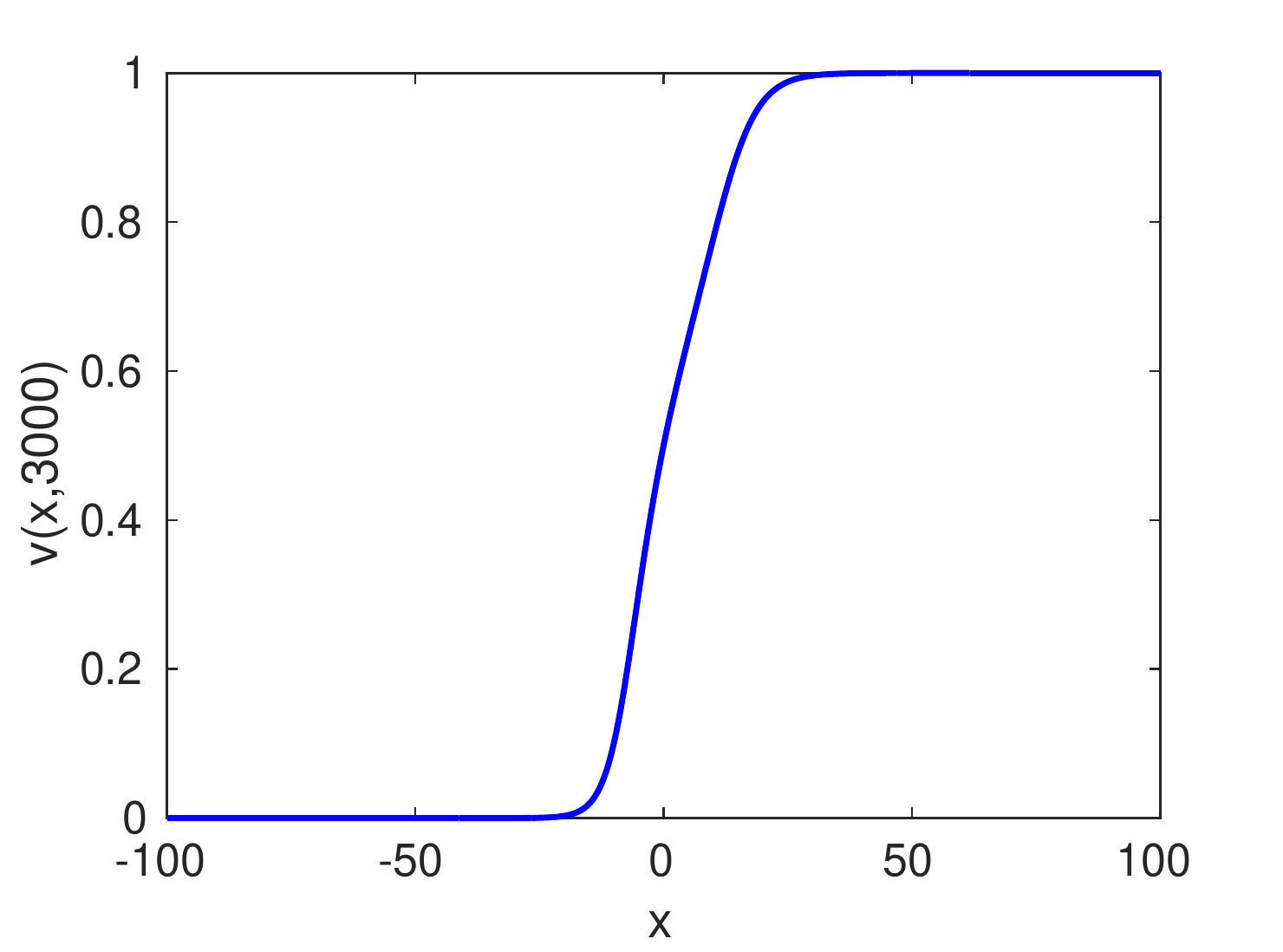}\label{b3-fig:QN.c}}
  \hspace*{-0.45cm}
  \subfigure[]{\includegraphics[width=0.25\textwidth] {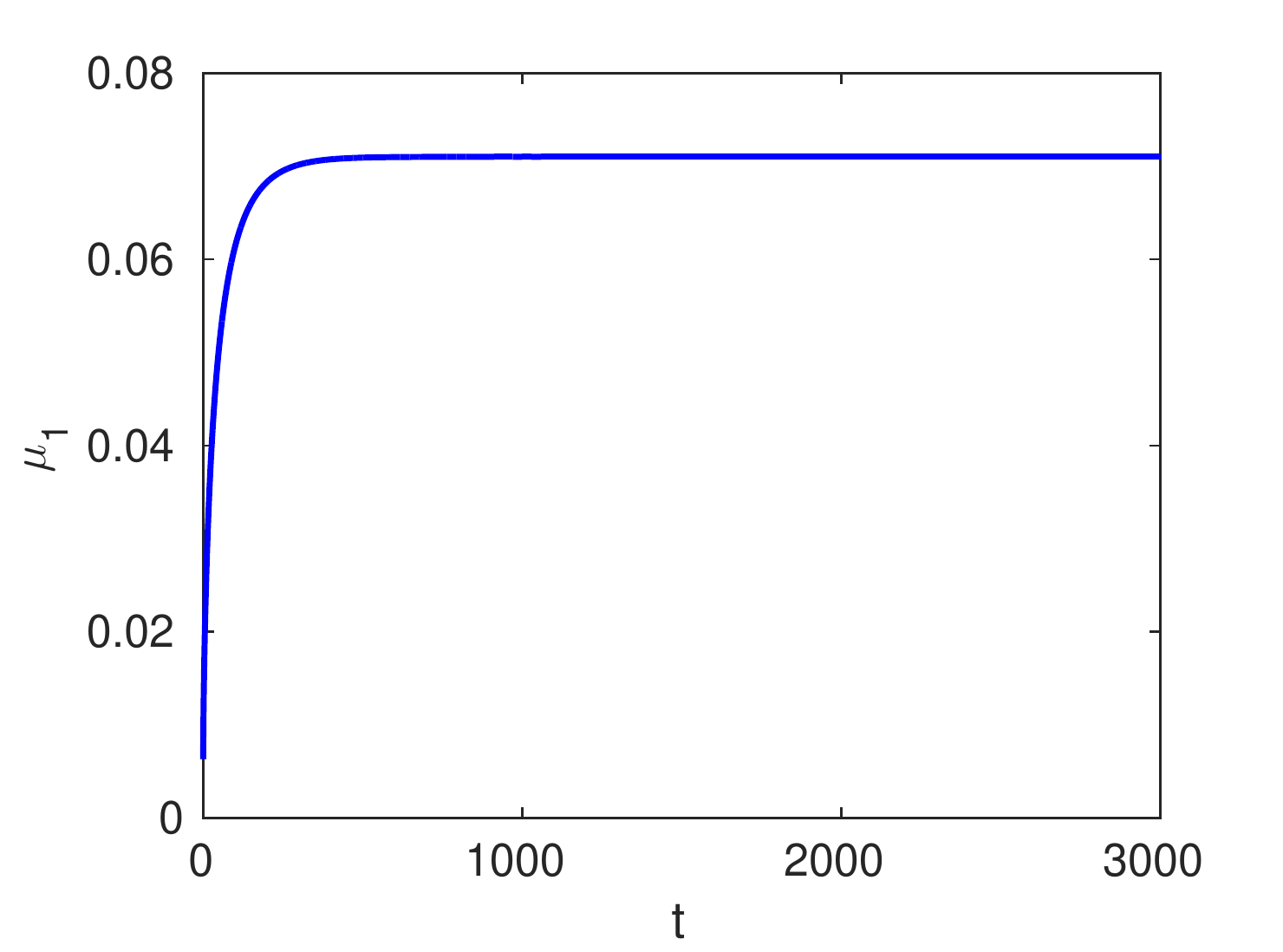}\label{b3-fig:QN.d}}
  \caption{QN-front: space-time of $u$ (a), of $v$ (b), profile $v$ (c), velocity $\mu$ (d)}
  \label{b3-fig:QN}
\end{figure}
Figure \ref{b3-fig:QN.a} shows the time evolution for a travelling front of the QNE for parameters $b_2=\tfrac{2}{5}$, $b_3=\tfrac{1}{2}$, $b_4=\tfrac{17}{20}$, 
spatial domain $[-100,100]$, initial data $u_0(x)=\tfrac{\tanh(x)+1}{2}$ and time range $[0,1500]$. At time $t\approx 1300$ the front leaves the computational 
domain. Figures \ref{b3-fig:QN.b} and \ref{b3-fig:QN.d} show the time
evolution of the front profile and the velocity obtained by solving
the freezing system
\eqref{b3-equ:s4.1.3} 
with homogeneous Neumann boundary conditions, $f$ from \eqref{b3-e3.1.1}, parameters $b_j$ and spatial domain as before, initial data $v_0=u_0$, and the  
template $\hat{v}=u_0$ on the time range $[0,3000]$. The front quickly
stabilises at the shape $v_{\star}$ shown in Figure \ref{b3-fig:QN.c},
and  the velocity quickly reaches its asymptotic value $\mu_{\star}\approx 0.07$ 
as shown in \ref{b3-fig:QN.d}. For the numerical
solution of \eqref{b3-e9}  resp.~\eqref{b3-equ:s4.1.3} we used a FEM 
space discretisation with Lagrange $C^0$-elements and maximal element size
$\triangle x=0.3$, the BDF method for time discretisation with maximum order
$2$, time step-size $\triangle t=0.3$, relative tolerances $10^{-2}$ and
$10^{-3}$, and absolute tolerances $10^{-3}$ and $10^{-4}$, combined with 
Newton's method for non-linear equations.
\end{example}
The next example is a two-dimensional system of type \eqref{b3-e10}
obtained by writing a scalar complex equation as a real system.
\begin{example}[Quintic-cubic Ginzburg-Landau equation]\label{b3-exa:s4.1.2} 
Consider the PDE
\begin{align*}
  z_t=\alpha\triangle z + g(z),\;z=z(x,t)\in\mathbb{C},\; g(z)=(\delta+\beta|z|^2+\gamma|z|^4)z,\; \alpha,\beta,\gamma\in\mathbb{C},\;\delta\in\mathbb{R},
\end{align*}
known as the quintic-cubic Ginzburg-Landau equation (short: QCGL).
\index{Ginzburg-Landau equation!quintic-cubic}

\begin{figure}[ht]
  \centering
  \hspace*{-1.3cm}
  \subfigure[]{\includegraphics[height=2.7cm]{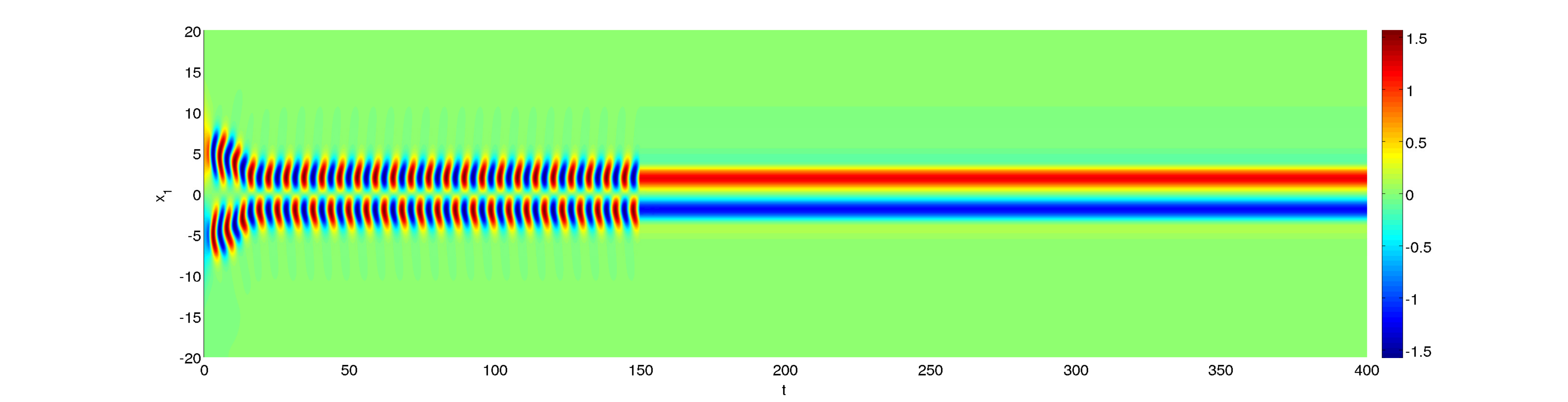}  \label{b3-fig:SS.a}}\
  \hspace*{-1.0cm}
  \subfigure[]{\includegraphics[height=2.5cm]{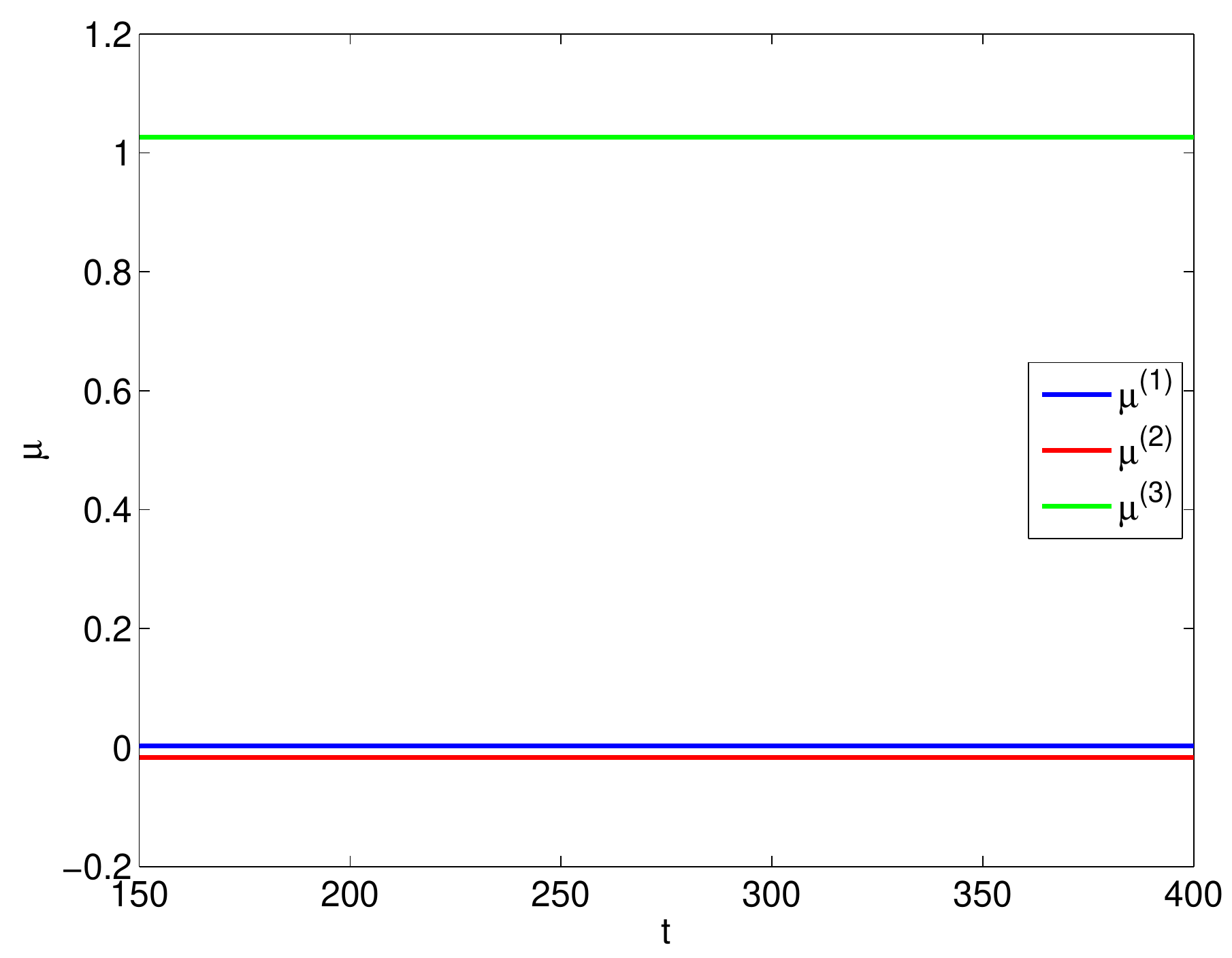}  \label{b3-fig:SS.b}}
  \caption{Spinning soliton of QCGL: space-time along $x_2=0$ (a), velocities (b)}
  \label{b3-fig:SS}
\end{figure}

Figure \ref{b3-fig:SS.a} shows the time evolution for the real part of a spinning soliton (cross-section at $x_2=0$) 
of the QCGL for parameters $\alpha=\tfrac{1}{2}+\tfrac{1}{2}i$, $\beta=\tfrac{5}{2}+i$, $\gamma=-1-\tfrac{1}{10}i$, $\delta=-\tfrac{1}{2}$,
spatial domain $B_{20}(0)=\{x\in\mathbb{R}^2:|x|\leqslant 20\}$, initial data 
$u_0(x)=(\mathrm{Re}z_0,\mathrm{Im}z_0)^{\top}$ for $z_0(x)=\tfrac{x}{5}\exp\left(-\frac{1}{49}|x|^2\right)$ and 
time range $[0,150]$. At time $t=150$ we take the solution data and switch on the freezing system \eqref{b3-equ:s4.1.7}.  
Figures \ref{b3-fig:SS.a} and \ref{b3-fig:SS.b} show the time evolution of
the real part of the soliton profile and the velocities 
obtained by solving \eqref{b3-equ:s4.1.7} with homogeneous Neumann boundary conditions, 
parameters $\alpha,\beta,\gamma,\delta$ and spatial domain as before, initial data $v_0=u(\cdot,150)$, template function $\hat{v}=u(\cdot,150)$ 
and time range $[150,400]$. Approximations of the real part of the soliton profile $v_{\star}$ and the velocities 
$\mu_{\star}=\left(\begin{smallmatrix}S_{\star}&a_{\star}\\0&0\end{smallmatrix}\right)$ with $S_{\star}\approx\left(\begin{smallmatrix}0&1.027\\-1.027&0\end{smallmatrix}\right)$ 
and $c_{\star}\approx\left(\begin{smallmatrix}0.003\\-0.017\end{smallmatrix}\right)$, are shown in Figures \ref{b3-fig:RW.a} and \ref{b3-fig:SS.b}. 
For the numerical solution of \eqref{b3-e10} resp.~\eqref{b3-equ:s4.1.7}
 we used FEM for space discretisation 
with Lagrange $C^0$-elements and maximal element size $\triangle x=0.25$,
the BDF method for time discretisation 
with maximum order $2$, time step-size $\triangle t=0.1$ resp.~$\triangle t=0.2$
, 
relative tolerance $10^{-4}$ resp.~$10^{-2}$, and absolute tolerance $10^{-5}$
resp.~$10^{-7}$, and 
Newton's method for non-linear systems.
\end{example}
\begin{figure}[ht]
  \centering
  \subfigure[]{\includegraphics[width=0.235\textwidth]{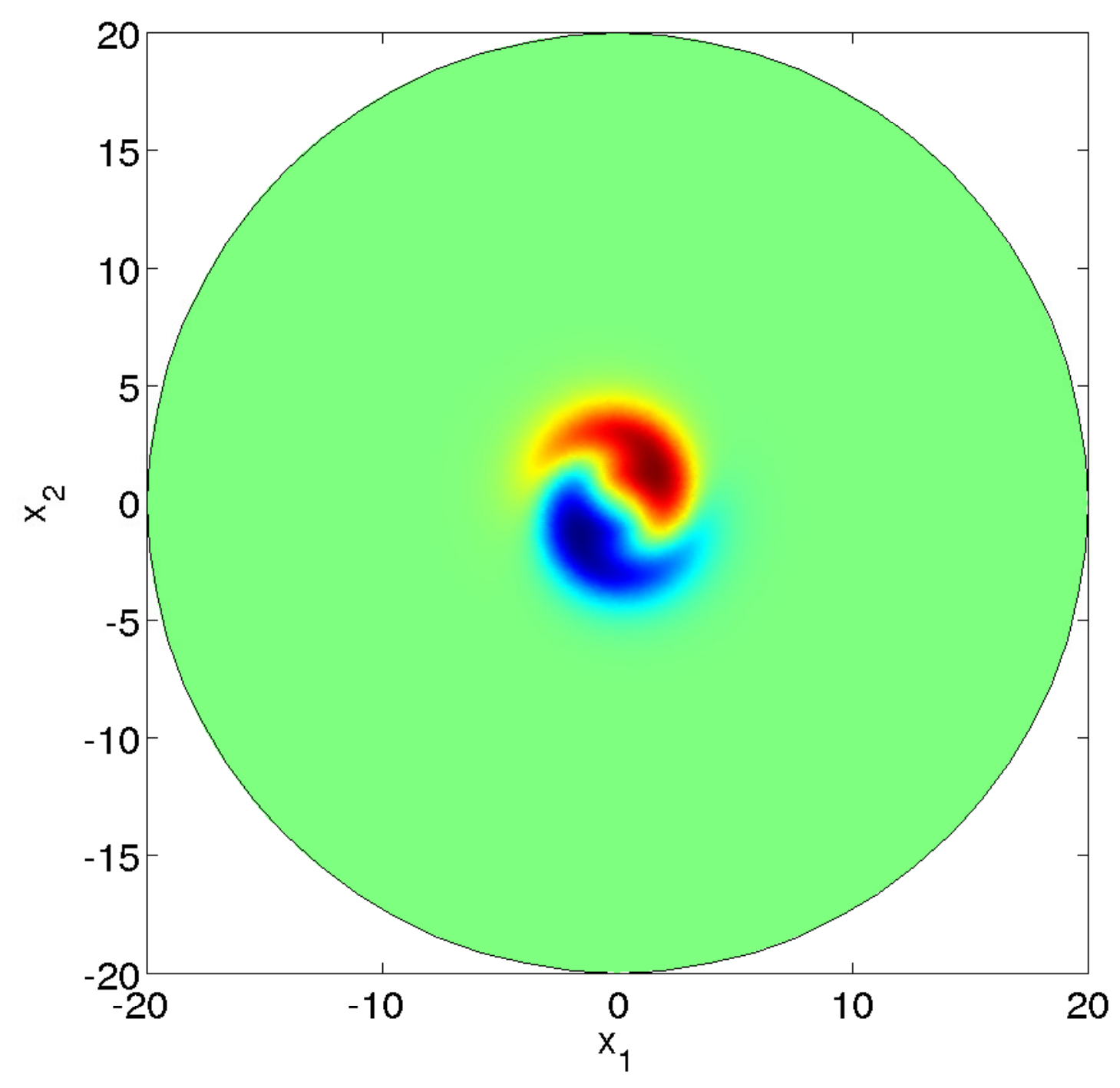}  \label{b3-fig:RW.a}}
  \hspace*{-0.25cm}
  \subfigure[]{\includegraphics[width=0.235\textwidth]{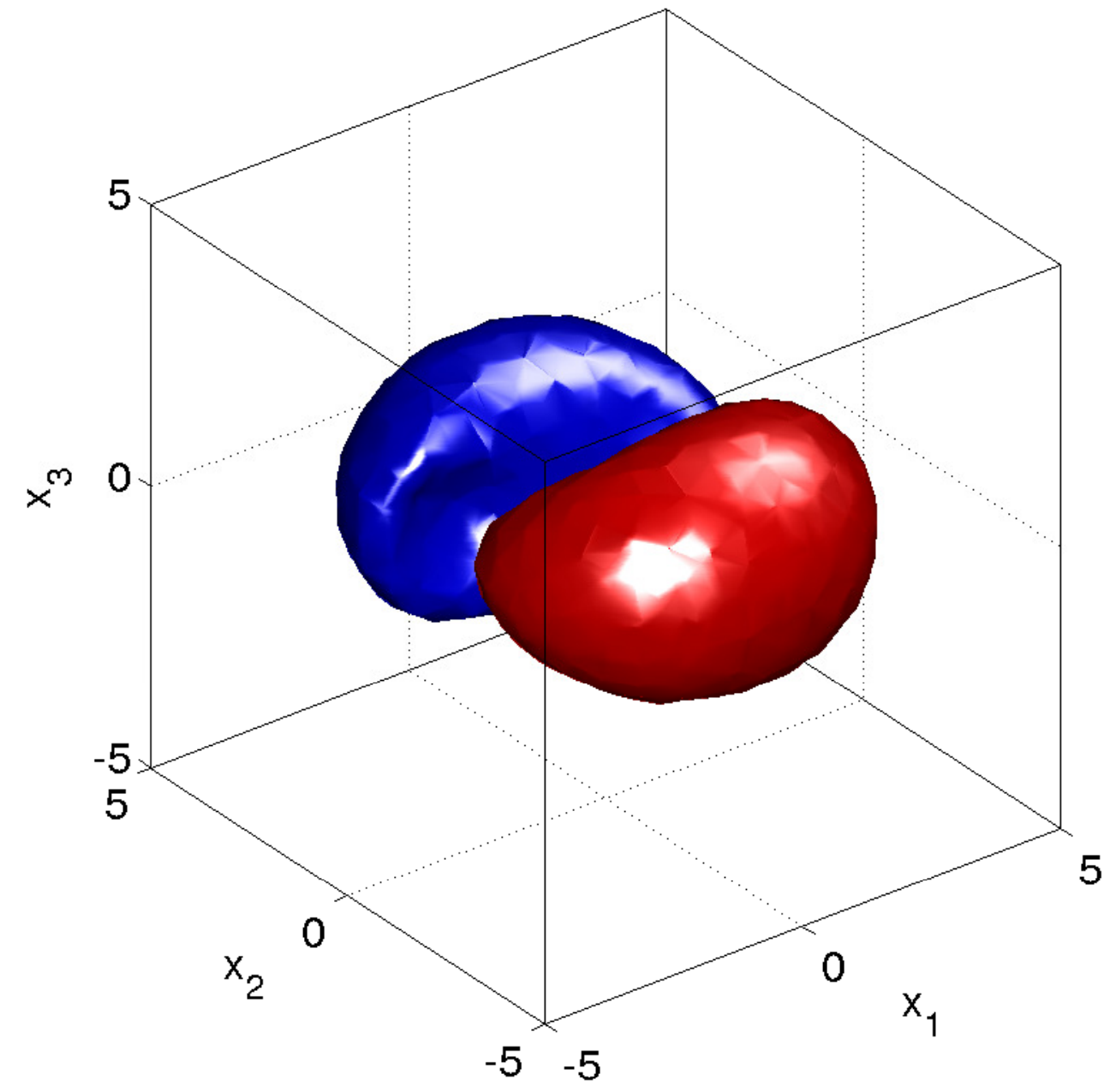}  \label{b3-fig:RW.b}}
  \hspace*{-0.25cm}
  \subfigure[]{\includegraphics[width=0.235\textwidth]{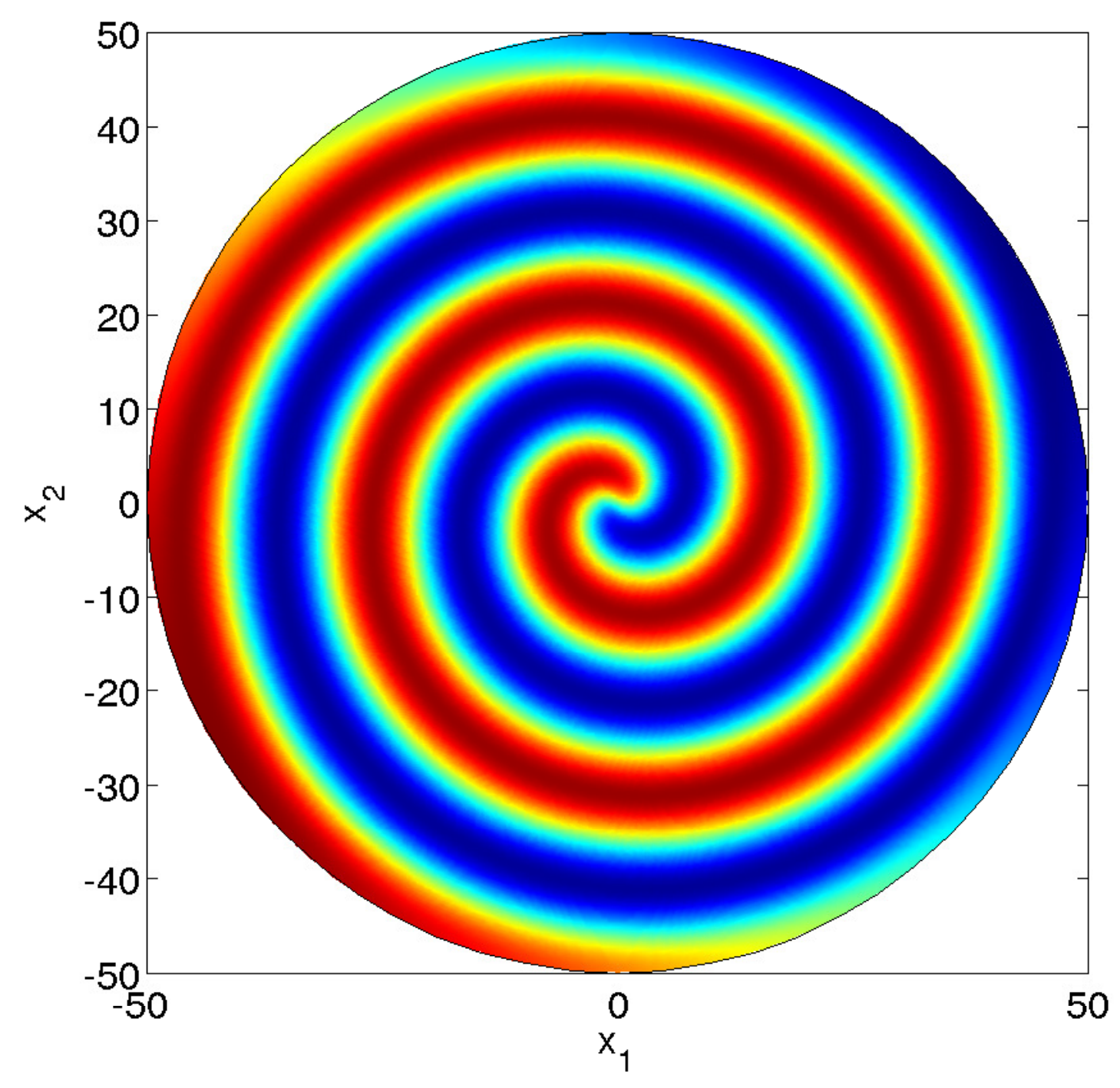}         \label{b3-fig:RW.c}}
  \hspace*{-0.25cm}
  \subfigure[]{\includegraphics[width=0.26\textwidth]{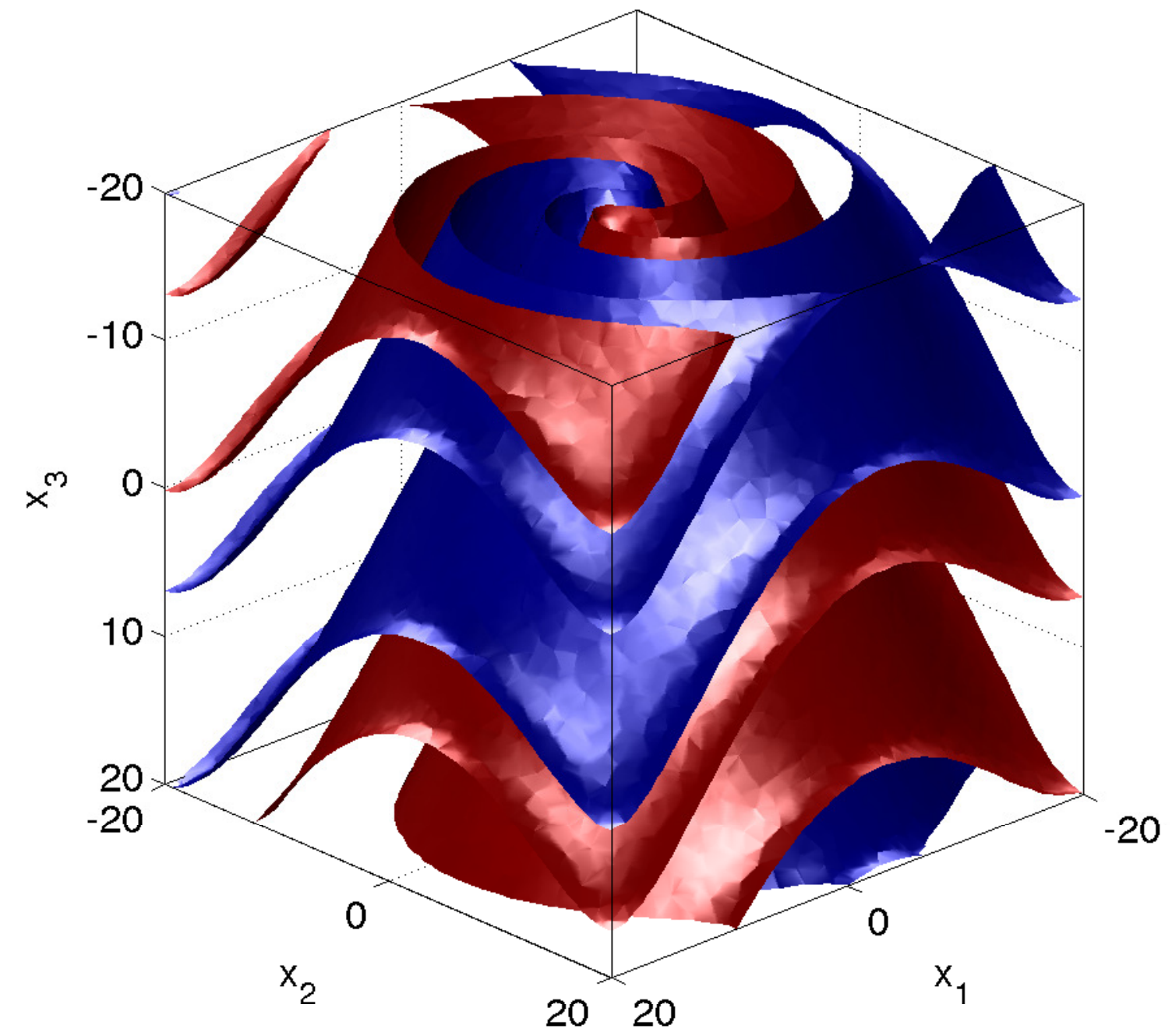} \label{b3-fig:RW.d}}
  \caption{Rotating waves: spinning soliton for $d=2$ (a) and $d=3$ (b), spiral wave for $d=2$ (c), untwisted scroll wave for $d=3$ (d)}
  \label{b3-fig:RW}
\end{figure}

The spinning solitons of the QCGL from Example~\ref{b3-exa:s4.1.2} are a special kind of a localised rotating wave for $d=2$, 
see Fig.~\ref{b3-fig:RW.a}. Their extension to $d=3$ dimensions is
displayed in  Fig.~\ref{b3-fig:RW.b}, and non-localised rotating waves, 
such as spiral waves are shown in Fig.~\ref{b3-fig:RW.c}. Finally, we
show a so-called scroll wave in Fig.~\ref{b3-fig:RW.d}. These types of waves
occur in 
various applications, e.g. in the QCGL \cite{b3-CMM01,b3-MMCML00}, the $\lambda-\omega$-system \cite{b3-KK81}, 
the Barkley model \cite{b3-B94}, and the FitzHugh-Nagumo system
\cite{b3-FH61}. Their treatment via the freezing method is discussed in
the papers \cite{b3-O14,b3-BOR14,b3-BL08}.

\subsection{Hyperbolic Systems}
\label{b3-s4.2}
The following hyperbolic system in one space dimension may be viewed as a
special case of \eqref{b3-e9} with $A=0$,
\begin{align}
  \label{b3-e4.2.1}
  u_t = Eu_x + f(u), &&u(\cdot,0)=u_0.
\end{align}
For \eqref{b3-e4.2.1} to be well-posed, we assume   $E\in\mathbb{R}^{m,m}$ to
be real diagonalisable and $f:\mathbb{R}^{m}\rightarrow\mathbb{R}^m$ to be sufficiently smooth. As in Section \ref{b3-s1.3}
travelling waves of \eqref{b3-e4.2.1} are solutions of the form
\eqref{b3-e10a}, the underlying Lie group $(G,\circ)$ is the additive
group $(\mathbb{R},+)$ acting on functions via translations.
The freezing system \eqref{b3-equ:s4.1.3} for pursuing profiles and velocities
now reads for the unknown quantities $(v,\mu,\gamma)$ as follows,
\begin{equation*} \label{b3-e4.2.2}
\begin{aligned}
  v_t =& Ev_{\xi} + \mu v_{\xi} + f(v), \quad \quad v(\cdot,0)=v_0,\\
  0 =& \langle \hat{v}_{\xi},v-\hat{v}\rangle_{L^2(\mathbb{R},\mathbb{R}^m)}, \\
  \gamma_t =& \mu, \quad \quad \gamma(0)=0.
\end{aligned}
\end{equation*}
The main difference to the parabolic case \eqref{b3-equ:s4.1.3} is due to
the fact that the unknown function $\mu(t)$ of this PDAE now appears in
the principal part of the spatial operator. This creates serious
difficulties, both for the numerical and the theoretical analysis.
These have been successfully treated in the works \cite{b3-R10,b3-R12a},
and a  series of numerical examples appears in \cite{b3-R10,b3-R12a,b3-BOR14}.
Moreover, with a slightly generalised notion of equivariance
(see \cite{b3-RKML03,b3-R10})
the freezing approach has found interesting applications to detecting
similarity solutions in Burgers' equation, see \cite{b3-RKML03,b3-BOR14}
for the one-dimensional and \cite{b3-R16a,b3-R16b} for the multi-dimensional
case.

Finally, we refer to the papers \cite{b3-R12b,b3-R12c} in which 
the stability of travelling waves and the freezing approach is analysed for
mixed parabolic-hyperbolic systems of the partitioned form
\begin{align}
  \label{b3-e4.2.3}
  u_t = \begin{pmatrix}A_{11}&0\\0&0\end{pmatrix}u_{xx} + \begin{pmatrix}g(u)\\B_{22}u_2\end{pmatrix}_x + \begin{pmatrix}f_1(u)\\f_2(u)\end{pmatrix}, &&u(\cdot,0)=u_0,
  \end{align}
with a positive diagonalisable matrix $A_{11}$ and a real diagonalisable
matrix $B_{22}$. This covers the famous Hodgkin-Huxley model for propagation
of pulses in nerve axons, cf. \cite[Ch.3.1]{b3-BOR14}.

\subsection{Non-Linear Wave Equations}
\label{b3-s4.3}
Another area of application are systems of non-linear wave equations in one space dimension
\begin{align}
  \label{b3-e4.3.1}
  Mu_{tt} = Au_{xx} + \tilde{f}(u,u_x,u_t), &&u(\cdot,0)=u_0,\quad u_t(\cdot,0)=v_0,
\end{align}
where $M\in\mathbb{R}^{m,m}$ is invertible, $A\in\mathbb{R}^{m,m}$,
$\tilde{f}:\mathbb{R}^{3m}\rightarrow\mathbb{R}^m$ is smooth 
and $u_0,v_0:\mathbb{R}\rightarrow\mathbb{R}^m$ denote the initial data.
Further we assume $M^{-1}A$ to be positive diagonalisable which implies local
well-posedness of \eqref{b3-e4.3.1}. In case $m=1$, 
travelling waves \eqref{b3-e10a} for equation \eqref{b3-e4.3.1} and
their global stability have been treated in \cite{b3-GR97,b3-GJ09}. The freezing
 ansatz \eqref{b3-e2.2.1} now requires to
 solve the following second order PDAE (cf. \cite{b3-BOR16a,b3-BOR16b})
\begin{equation} \label{b3-e4.3.2}
\begin{aligned}
  M v_{tt} = & (A-\mu_1^2 M)v_{\xi\xi} + 2\mu_1 M v_{\xi t} + \mu_2 M v_{\xi} +
  \tilde{f}(v,v_{\xi},v_t-\mu_1 v_{\xi}) \\
  0 =& \langle \hat{v}_{\xi},v-\hat{v}\rangle_{L^2(\mathbb{R},\mathbb{R}^m)}, \\
  \mu_{1,t} =& \mu_2,\quad\gamma_t =\mu_1, \\
  v(\cdot,0) =& u_0,\quad v_t(\cdot,0)=v_0+\mu_1^0 u_{0,\xi},\quad \mu_1(0)=\mu_1^0,\quad \gamma(0)=0
\end{aligned}
\end{equation}
for the unknown quantities $(v,\mu_1,\mu_2,\gamma)$. Travelling waves
$(v_{\star},\mu_{\star})$ appear as steady states of \eqref{b3-e4.3.2}
(with $\mu_1=\mu_{\star}$, $\mu_2=0$) and satisfy the equation
\begin{equation*} \label{b3-e4.3.3}
  0= (A-\mu_{\star}^2 M)v_{\star,\xi\xi}  + f(v_{\star},v_{\star,\xi},-\mu_{\star} v_{\star,\xi}).
  \end{equation*}
  Differentiating the algebraic constraint in \eqref{b3-e4.3.2} w.r.t. time
  at $t=0$ and inserting the initial conditions leads to a first consistency
  condition for $\mu_1^0$
  \begin{equation} \label{b3-e4.3.3a}
  \mu_1^0 \langle u_{0,\xi},\hat{v}_{\xi} \rangle_{L^2} + \langle v_0, \hat{v}_{\xi}
  \rangle_{L^2} = 0,
  \end{equation}
  and differentiating twice at $t=0$ gives a consistency condition
  for  $\mu_2(0)=\mu_2^0$:
  \begin{equation} \label{b3-e4.3.3b}
  0 = \langle (M^{-1}A+(\mu_1^0)^2I_m) u_{0,\xi \xi} 
  + 2 \mu_1^0 v_{0,\xi} 
  +  M^{-1}f(u_0,u_{0,\xi},v_0),
  \hat{v}_{\xi} \rangle_{L^2} + \mu_2^0 \langle
  u_{0,\xi},\hat{v}_{\xi}
\rangle_{L^2}.
  \end{equation}
  The local stability of the PDAE system \eqref{b3-e4.3.2} is analysed in \cite{b3-BOR16a}
 while a generalisation to several space dimensions and a numerical example
 appear in \cite{b3-BOR16b}. 
  It is interesting to note that the system \eqref{b3-e4.3.1} may be written
  as a first order system \eqref{b3-e4.2.1} of dimension $3m$. Taking a positive
  square root $N = (M^{-1}A)^{1/2}$ and introducing
  the variables $U_1=u$, $U_2=u_t+Nu_x$, $U_3=u_t-Nu_x+c u$
  ($c\in \RR$ arbitrary) leads to a system \eqref{b3-e4.2.1} with
  \begin{equation} \label{b3-e4.3.4}
  \begin{aligned}
  E =& \begin{pmatrix} N & 0 & 0 \\ 0 & N & 0 \\ 0 & 0 & -N \end{pmatrix},
  \quad
  f(U)= \begin{pmatrix} -cU_1 + U_3 \\ g(U) \\ g(U)+c U_2 \end{pmatrix}\\
  g(U) = & M^{-1}\tilde{f}(U_1,\frac{1}{2}N^{-1}(U_2-U_3+cU_1),
  \frac{1}{2}(U_2+U_3 - c U_1)).
  \end{aligned}
  \end{equation}
  Though we prefer to solve numerically the second order system
  \eqref{b3-e4.3.2}, the first order system (with a suitable choice of the constant
  $c$) is useful for 
  applying the stability results from \cite{b3-R12a}, see
  \cite{b3-BOR16a} and Section \ref{b3-s3}.
\begin{example}[Quintic Nagumo wave equation]\label{b3-exa:s4.1.3}
Taking the quintic non-linearity $f=\tilde{f} $ from \eqref{b3-e3.1.1}
with the wave equation \eqref{b3-e4.3.1} we obtain the
quintic Nagumo wave equation (short: QNWE), see \cite{b3-BOR16b}. 
\begin{figure}[ht]
  \centering
  \subfigure[]{\includegraphics[width=0.25\textwidth] {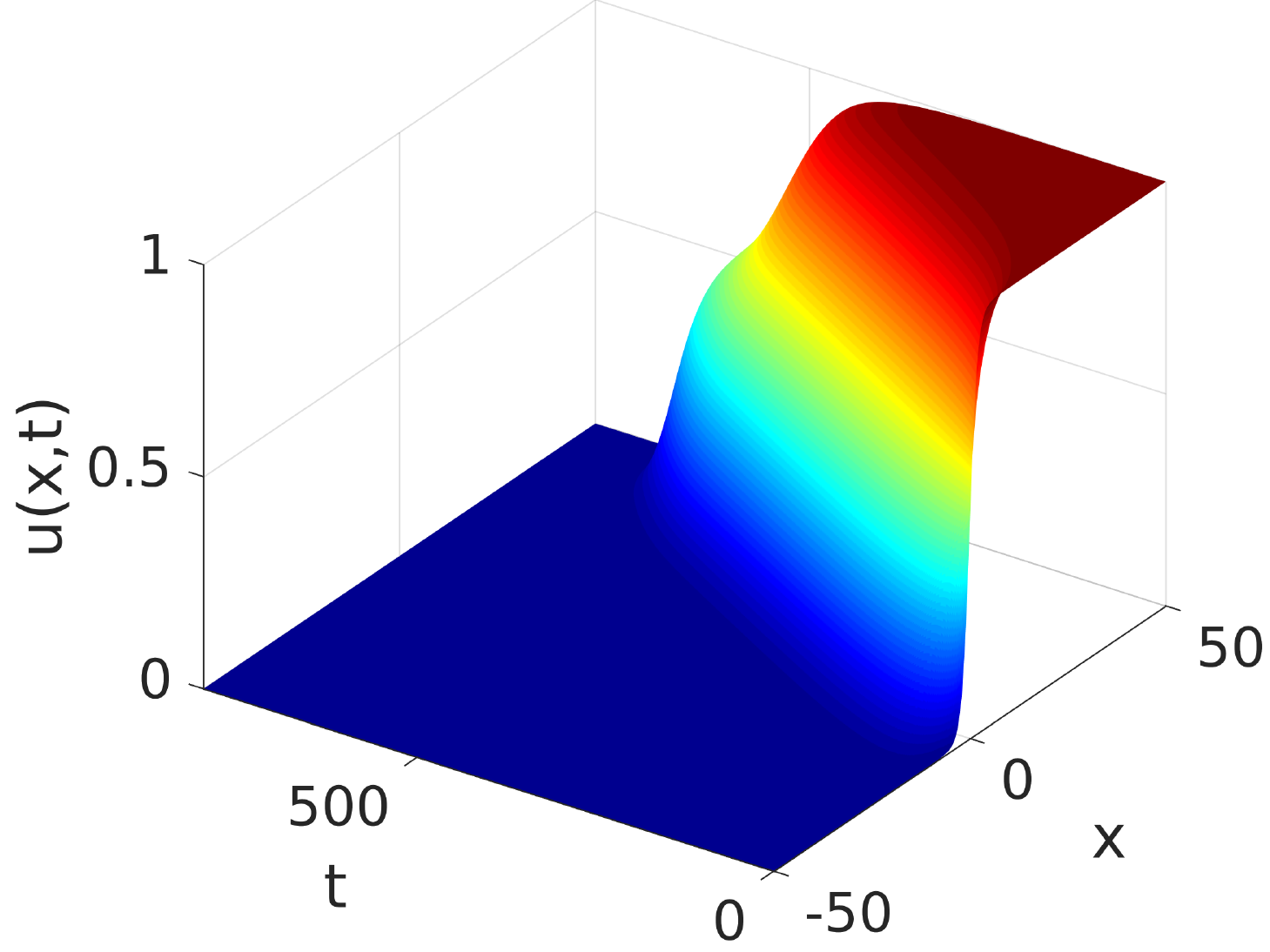}\label{b3-fig:QNWE.a}}
  \hspace*{-0.45cm}
  \subfigure[]{\includegraphics[width=0.25\textwidth] {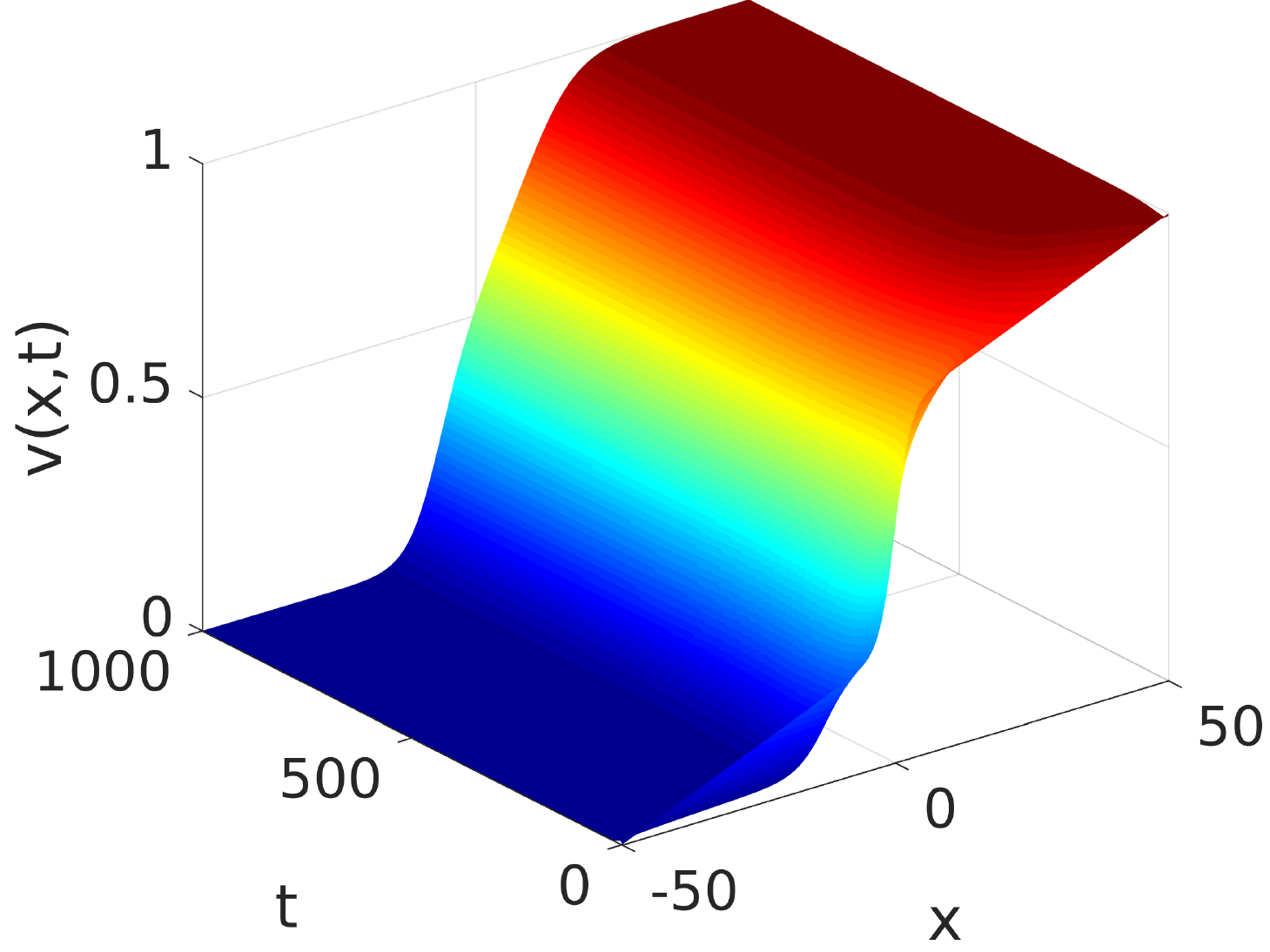} \label{b3-fig:QNWE.b}}
  \hspace*{-0.45cm}
  \subfigure[]{\includegraphics[width=0.25\textwidth] {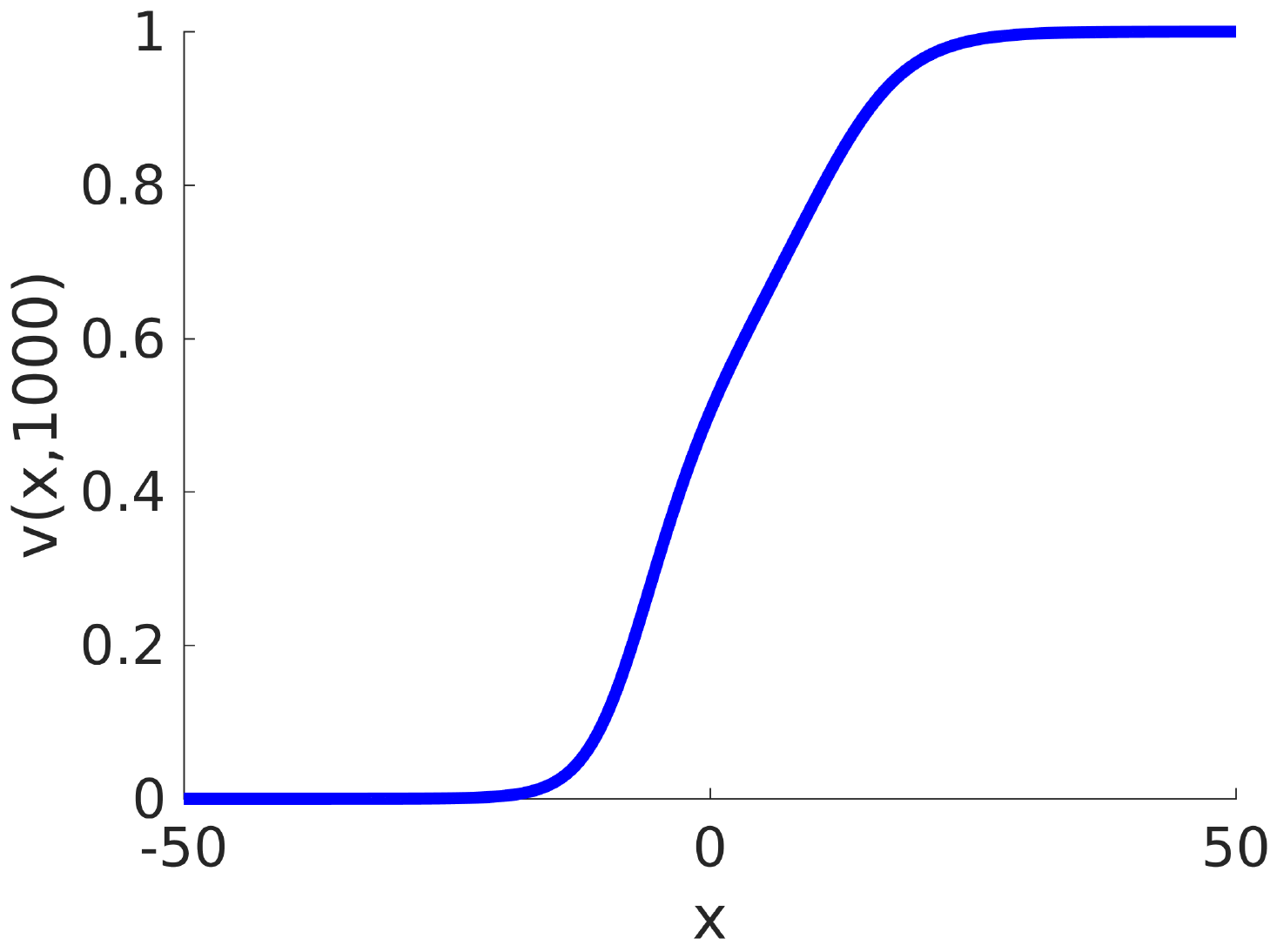}\label{b3-fig:QNWE.c}}
  \hspace*{-0.45cm}
  \subfigure[]{\includegraphics[width=0.25\textwidth] {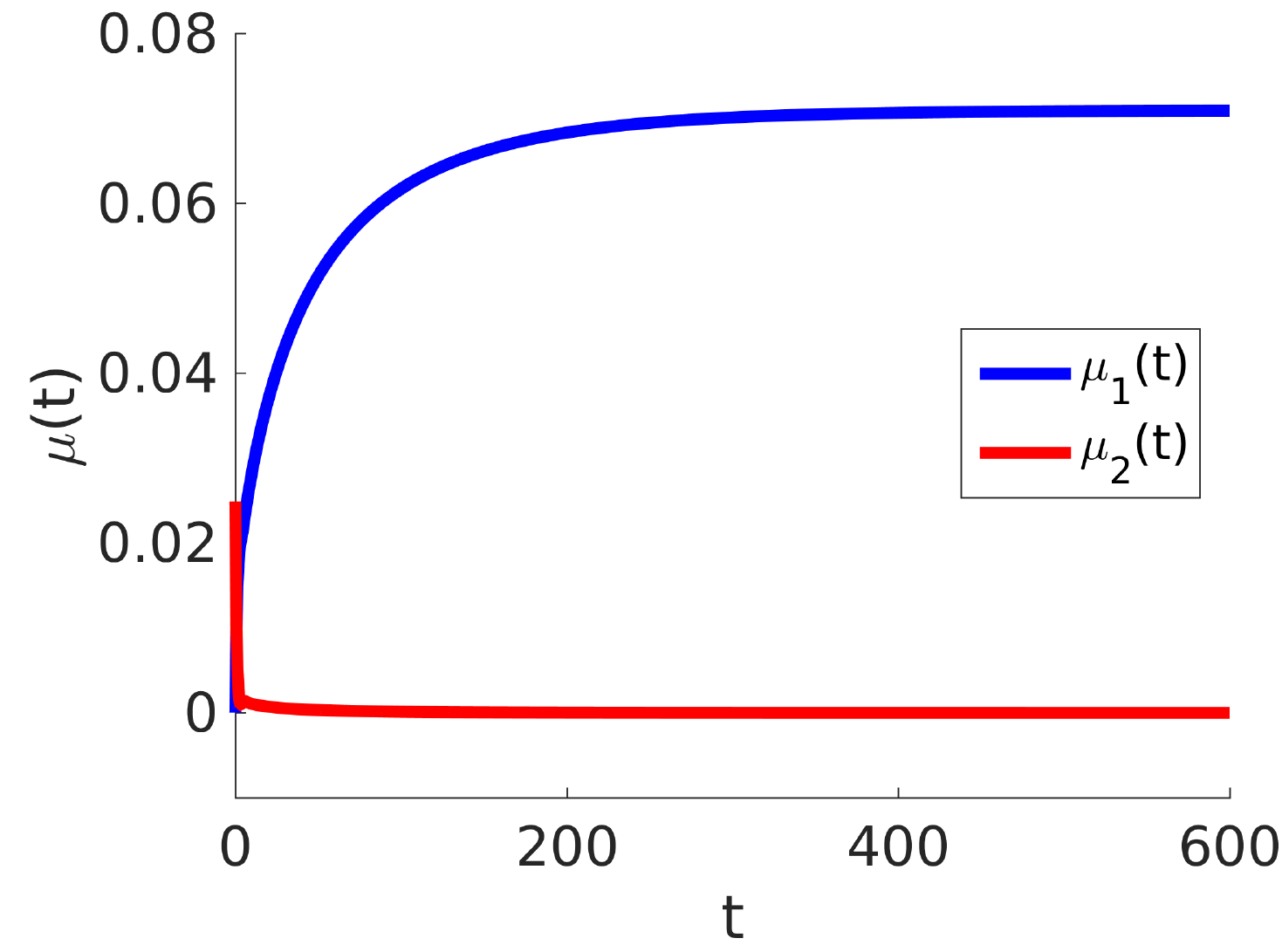}\label{b3-fig:QNWE.d}}
  \caption{QNWE-front: space-time of $u$ (a), $v$ (b), profile $v$ (c), velocity $\mu$ (d)}
  \label{b3-fig:QNWE}
\end{figure}

Figure \ref{b3-fig:QNWE.a} shows the time evolution for a travelling front of the QNWE for parameters $M=\tfrac{1}{2}$, $b_2=\tfrac{2}{5}$, $b_3=\tfrac{1}{2}$, 
$b_4=\tfrac{17}{20}$, spatial domain $[-50,50]$, initial data $u_0(x)=\tfrac{1}{2}(1+\tanh(\tfrac{x}{2}))$, $v_0(x)=0$ and time range $[0,800]$. At time $t\approx 600$ 
the front leaves our computational domain. Figures \ref{b3-fig:QNWE.b} and
\ref{b3-fig:QNWE.d} show the time evolution of the front profile and the
velocity 
obtained by solving \eqref{b3-e4.3.2} with homogeneous Neumann boundary conditions, parameters $M$, $b_j$, spatial domain and 
initial data as before, template $\hat{v}=u_0$ and time range $[0,1000]$. An
approximation of the front profile $v_{\star}$ (with $v_{-}=0$, $v_{+}=1$) and
the approach towards the limit velocity $\mu_{\star}\approx 0.07$ are shown in Figures \ref{b3-fig:QNWE.c} and \ref{b3-fig:QNWE.d}. 
The data for the numerical solution of \eqref{b3-e4.3.1}
resp.~\eqref{b3-e4.3.2} are the same as in Example \ref{b3-exa:s4.1.1},
except for the step-sizes $\triangle x=0.1$ and $\triangle t=0.2$. 
\end{example}

\subsection{Hamiltonian PDEs}
\label{b3-s4.5}
So far we mainly considered waves in dissipative PDEs which are
detected during simulation via the freezing method due to their asymptotic stability.
This changes fundamentally for PDEs with Hamiltonian structure which
typically allow several or even infinitely many conserved quantities.
They fit into the general class of
evolution problems described in Section \ref{b3-s1.1} but require quite
different techniques for establishing existence  and uniqueness of
wave solutions \cite{b3-F15} as well as their stability
(\cite{b3-GSS87,b3-GSS90}).

As a  model example consider the  cubic non-linear Schr\"{o}dinger equation
(NLS, see the references \cite{b3-C03,b3-F15,b3-KFC15,b3-SS99})
\begin{equation}
\label{b3-sd-eq01}
iu_t = - u_{xx} - |u|^2u, \quad u(\cdot,0)=u_0,
\end{equation}
which may be subsumed under \eqref{b3-e1} with $X=H^{1}(\mathbb{R};\mathbb{C})$
, $Z=H^{3}(\mathbb{R};\mathbb{C})$.
\index{Non-Linear Schr\"odinger equation}
Equivariance holds with respect to the action
\begin{equation*}
\label{b3-sd-eq02}
a(\gamma)v = e^{-i\gamma_1}v(\cdot - \gamma_2), \quad \gamma=(\gamma_1,\gamma_2) \in G
\end{equation*}
of the two-dimensional Lie group $G=S^1\times \mathbb{R}$. With $\mu=(\mu_1,\mu_2) \in \RR^2$ the freezing system \eqref{b3-e21} is given by
\begin{equation}
\label{b3-sd-eq03}
iv_t  = -v_{\xi \xi}-|v|^2v - \mu_1 v + i \mu_2 v_{\xi}, \quad v(\cdot,0)=u_0,
\end{equation}
and the fixed phase condition \eqref{b3-e24} with some $\hat{v}\in X$ reads
\begin{equation}
\label{b3-sd-eq04}
0= \big\langle i \hat v,v \big\rangle_0 = \big\langle  \hat v_{\xi},v \big\rangle_0,
\end{equation}
where
$\langle u, v \rangle_0 = \mathrm{Re}\int_{\RR}\bar{u}(x) v(x) dx $.
There is a well-known  two-parameter family of solitary wave solutions given
by 
\begin{equation} \label{b3-sd-eq05}
\begin{aligned}
u_{\star}(x,t) =& e^{i \mu_1 t}v_{\star}(x- \mu_2 t; \mu_1,\mu_2),\\
v_{\star}(\xi;\mu_1,\mu_2)=&\frac{\omega \sqrt{2} e^{i\mu_2\xi/2}}{\mathrm{cosh}(\omega \xi)},
\quad \omega^2 = \mu_1 - \frac{\mu_2^2}{4},
\end{aligned}
\end{equation}
\noindent see for example \cite[Ch.II.3]{b3-F12}.
\index{solitary waves}
For the following numerical computations we choose parameter values
$\mu_2=0.3$, $\omega=1$. 
\begin{figure} 
\centering
\includegraphics[width=0.49\textwidth]{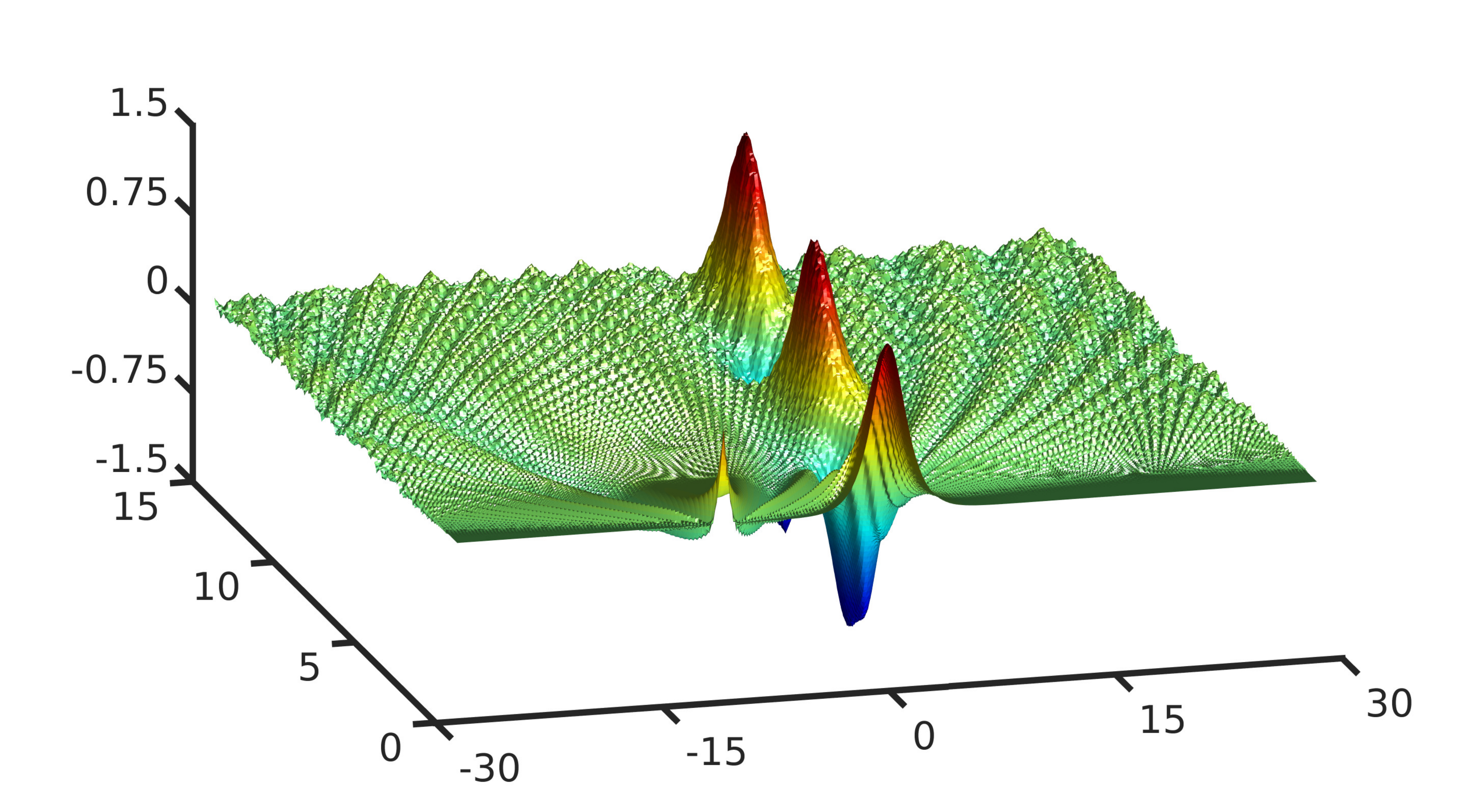}
\includegraphics[width=0.49\textwidth]{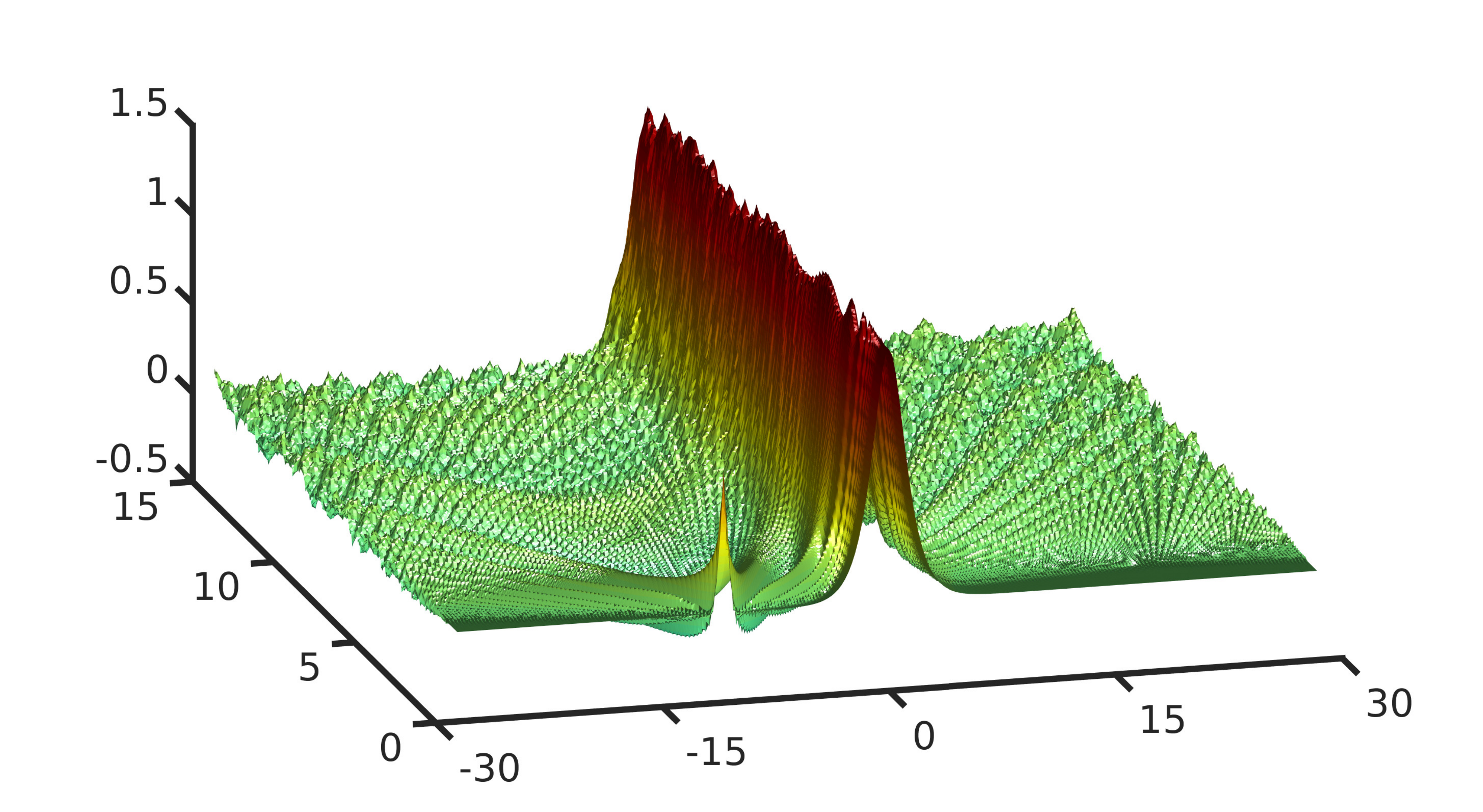}
\caption{Solitary wave of the NLS with spike-like initial perturbation:\newline
direct numerical simulation (left) vs. solution of the freezing system (right)}
\label{b3-figNLS}
\end{figure}
Discretisation in time is done via a split-step Fourier method with step
size $\Delta t = 10^{-3}$. The spatial grid is formed by $2K=256$ equidistant
points on the interval $[x_-,x_+]$ with
$x_+=-x_-=\tfrac{\pi}{0.11}\approx 28.56$. A spike-like perturbation
at $x=-11$ is added to the initial data. Figure \ref{b3-figNLS}
shows the solution for both the original system and the freezing system.
Clearly, the freezing system prevents the wave from rotating and travelling,
while the  interference patterns caused by the initial perturbation
are essentially preserved.
A theoretical result supporting these observations will be described
in Section \ref{b3-s3.4}, and a detailed presentation can be found in
the thesis \cite{b3-D17}.

\subsection{Multi-Waves}
\label{b3-s4.6}
For a numerical experiment of decomposing and freezing multi-waves we take up
 Example \ref{b3-exa:s4.1.1} of the Quintic Nagumo equation (QNE).
\begin{example}[Quintic Nagumo equation]\label{b3-exa:s4.1.4}
\begin{figure}[ht]
  \centering
  \subfigure[]{\includegraphics[width=0.25\textwidth]{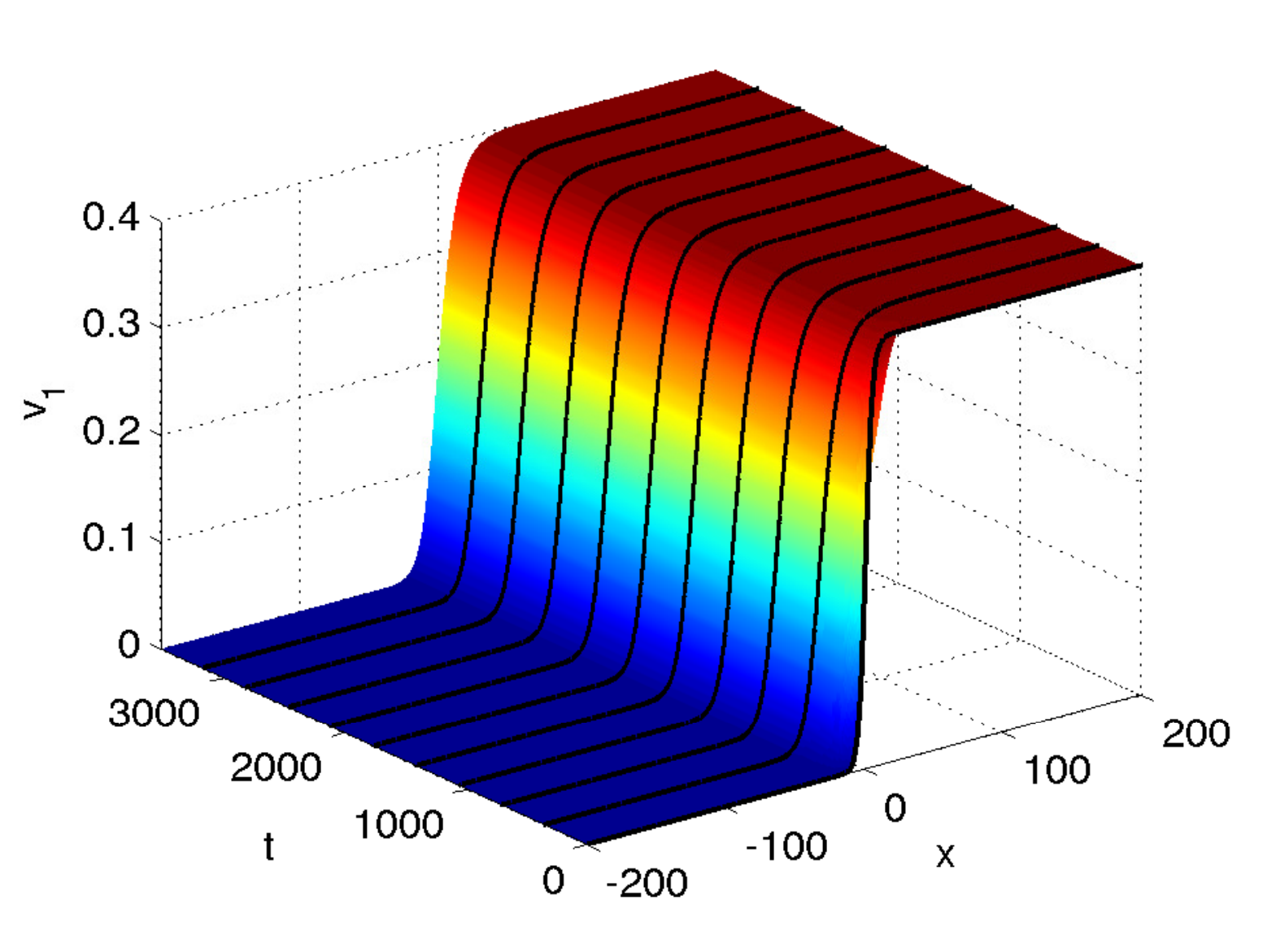}     \label{b3-fig:QN2F.a}}
  \hspace*{-0.45cm}
  \subfigure[]{\includegraphics[width=0.25\textwidth]{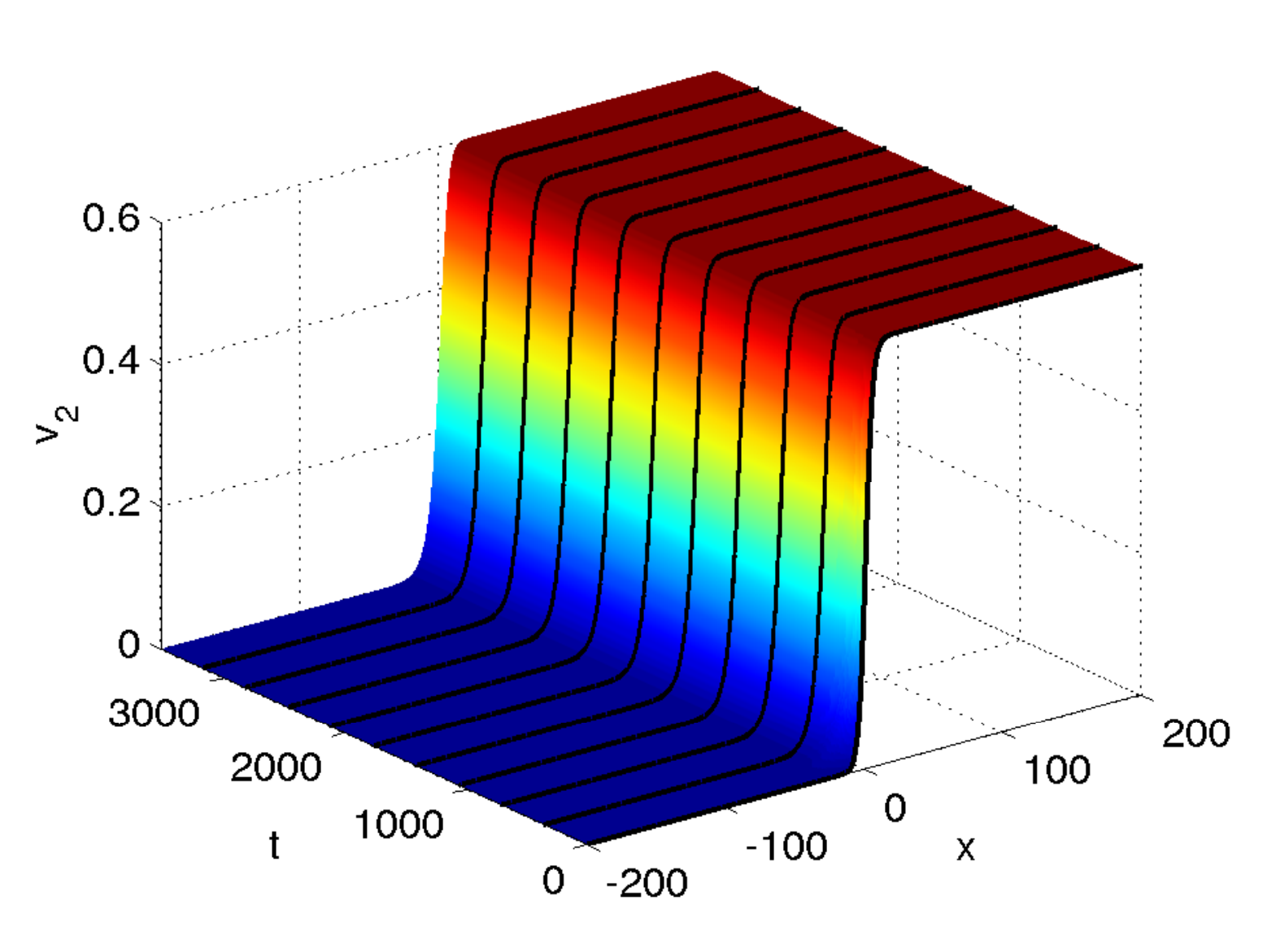}     \label{b3-fig:QN2F.b}}
  \hspace*{-0.45cm}
  \subfigure[]{\includegraphics[width=0.25\textwidth]{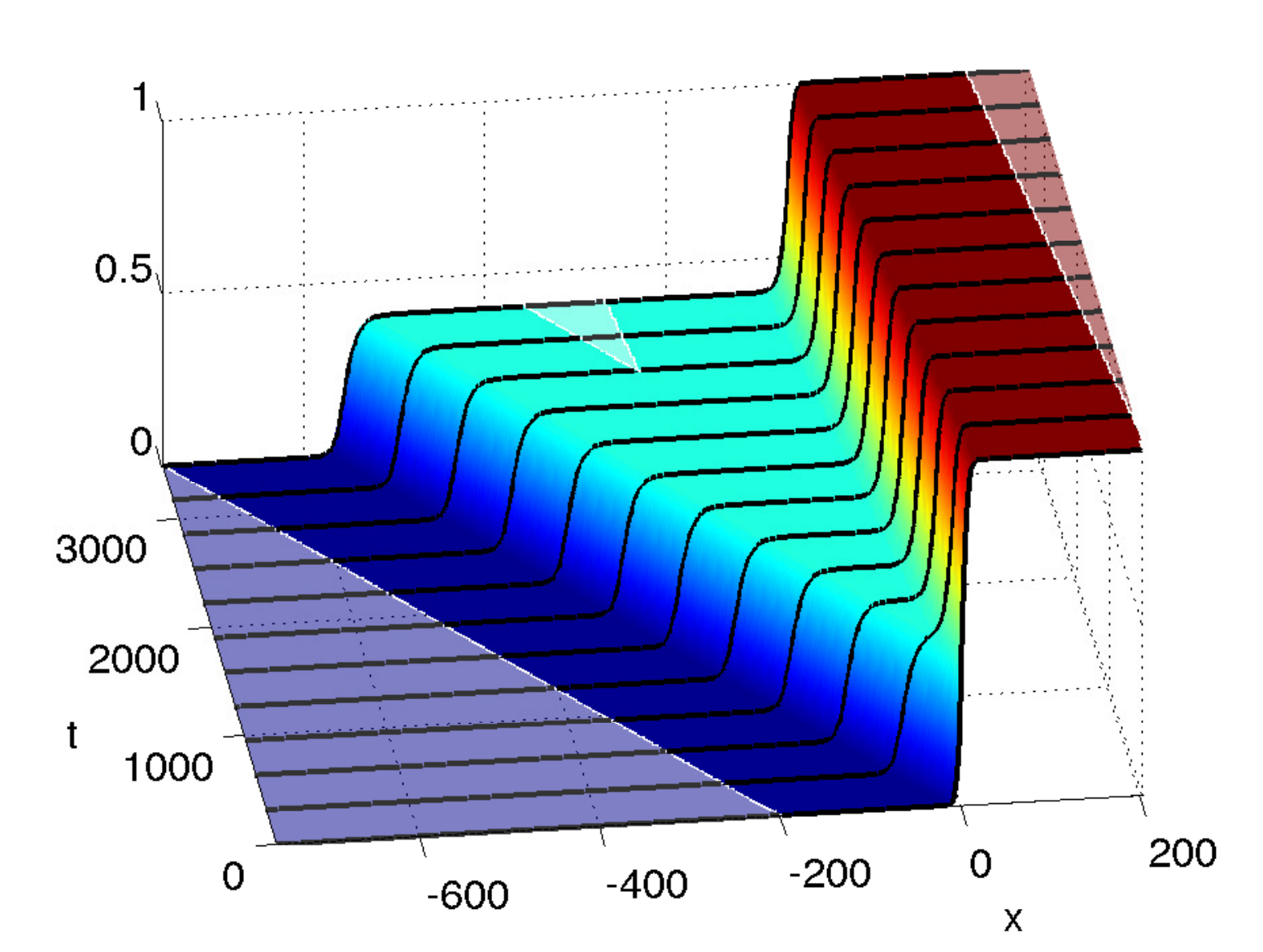} \label{b3-fig:QN2F.c}}
  \hspace*{-0.45cm}
  \subfigure[]{\includegraphics[width=0.25\textwidth]{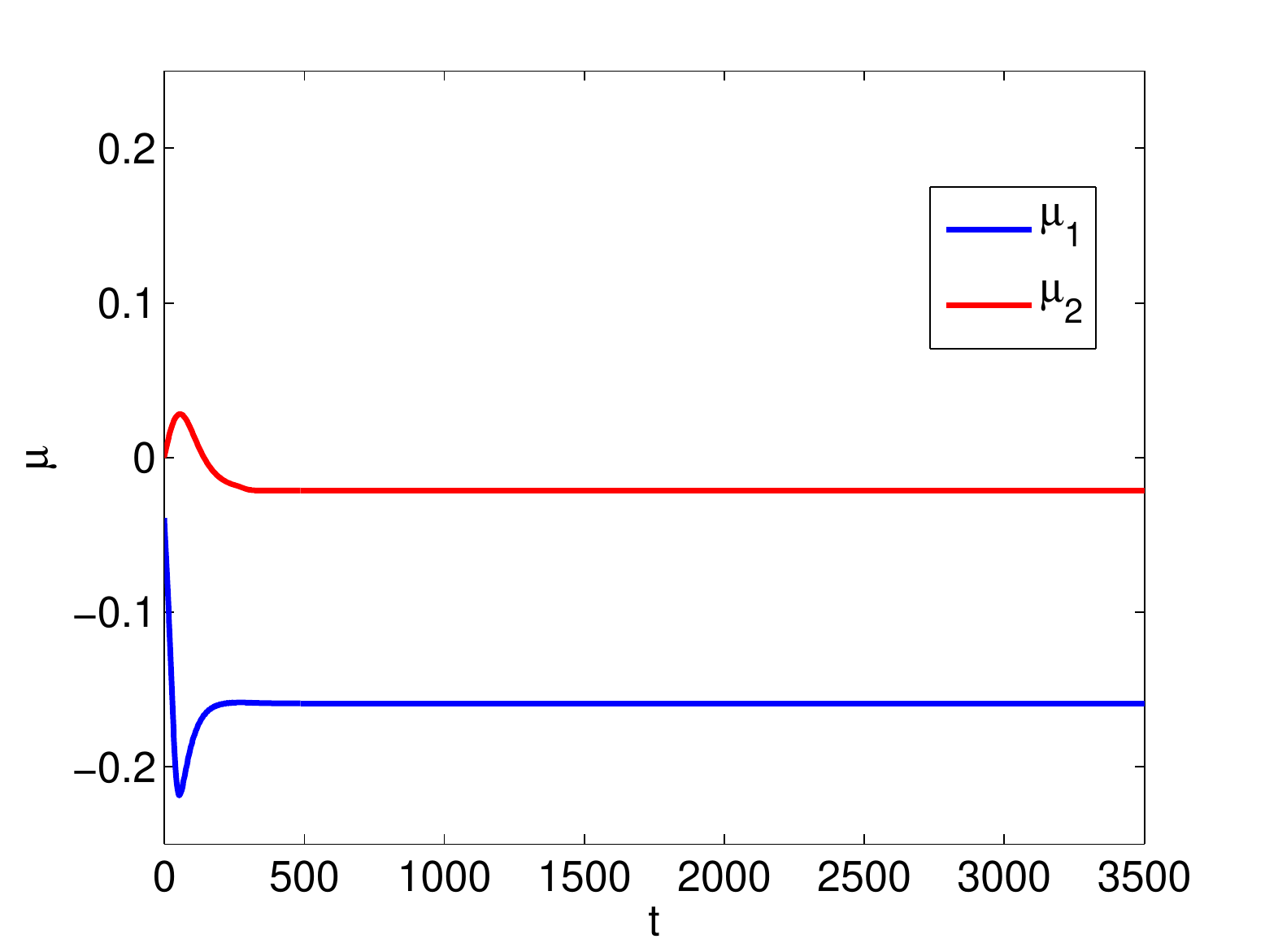}      \label{b3-fig:QN2F.d}}
  \caption{$2$-front of QNE: profile $v_1$ (a), profile $v_2$ (b), superposition (c), and velocities $\mu_1,\mu_2$ (d)}
  \label{b3-fig:QN2F}
\end{figure}

Figure \ref{b3-fig:QN2F.c} shows the time evolution of the superposition $\sum_{j=1}^{2}v_j(x-\gamma_j(t),t)$, which can be considered 
as an approximation of a travelling $2$-front $u$ of the original QNE \eqref{b3-e2.3.1} with $f$ from \eqref{b3-e3.1.1}. 
The quantities $(v_j,\mu_j)$ are the solutions of \eqref{b3-e2.3.4} and provides us approximations of $(v_{\star,j},\mu_{\star,j})$.
Figure \ref{b3-fig:QN2F.c} shows that the lower front $v_1$ (travelling at
speed $\mu_1$) is faster than the upper front $v_2$ (travelling at speed $\mu_2$), 
i.e. we may expect $\mu_{\star,1}<\mu_{\star,2}<0$. 
Figure \ref{b3-fig:QN2F.a} and \ref{b3-fig:QN2F.b} (resp. \ref{b3-fig:QN2F.d}) show the time evolution of the single front profiles $v_1$ and $v_2$ (resp. the velocities $\mu_1,\mu_2$) 
obtained by solving \eqref{b3-e2.3.4} with homogeneous Neumann boundary conditions, $f$ from \eqref{b3-e3.1.1}, parameters $b_2=\tfrac{1}{32}$, $b_3=\tfrac{2}{5}$, 
$b_4=\tfrac{73}{100}$, spatial domain $[-200,200]$, multi-waves $N=2$, initial data $v_1^0(\xi)=\tfrac{v_2^-}{2}\left(\tanh(\tfrac{\xi}{5})+1\right)$, 
$v_2^0(\xi)=\tfrac{1-v_2^-}{2}\left(\tanh(\tfrac{\xi}{5})+1\right)$ with $v_2^{-}=b_3$, $\gamma_1^0=\gamma_2^0=0$, templates $\hat{v}_j=v_j^0$, 
bump function $\varphi(\xi)=\mathrm{sech}(\tfrac{\xi}{20})$ and time range $[0,3000]$. Approximations of the single front profiles $v_{\star,j}$ 
(with $v_1^{-}=0$, $v_1^{+}=a_4=v_2^{-}$, $v_2^{+}=1$) and velocities $\mu_{\star,1}\approx -0.159$, $\mu_{\star,1}\approx -0.021$ 
are shown in Figure \ref{b3-fig:QN2F.a}, \ref{b3-fig:QN2F.b} and \ref{b3-fig:QN2F.d}. For the numerical solution of \eqref{b3-e2.3.4} we used the FEM 
for space discretisation with Lagrange $C^0$-elements and maximal element size $\triangle x=0.4$, the BDF method for time discretisation with maximum order $2$, 
intermediate time steps, time step-size $\triangle t=0.8$, and the Newton method for solving non-linear equations. 
\end{example}

\begin{figure}[ht]
  \centering
  \subfigure[]{\includegraphics[width=0.25\textwidth]{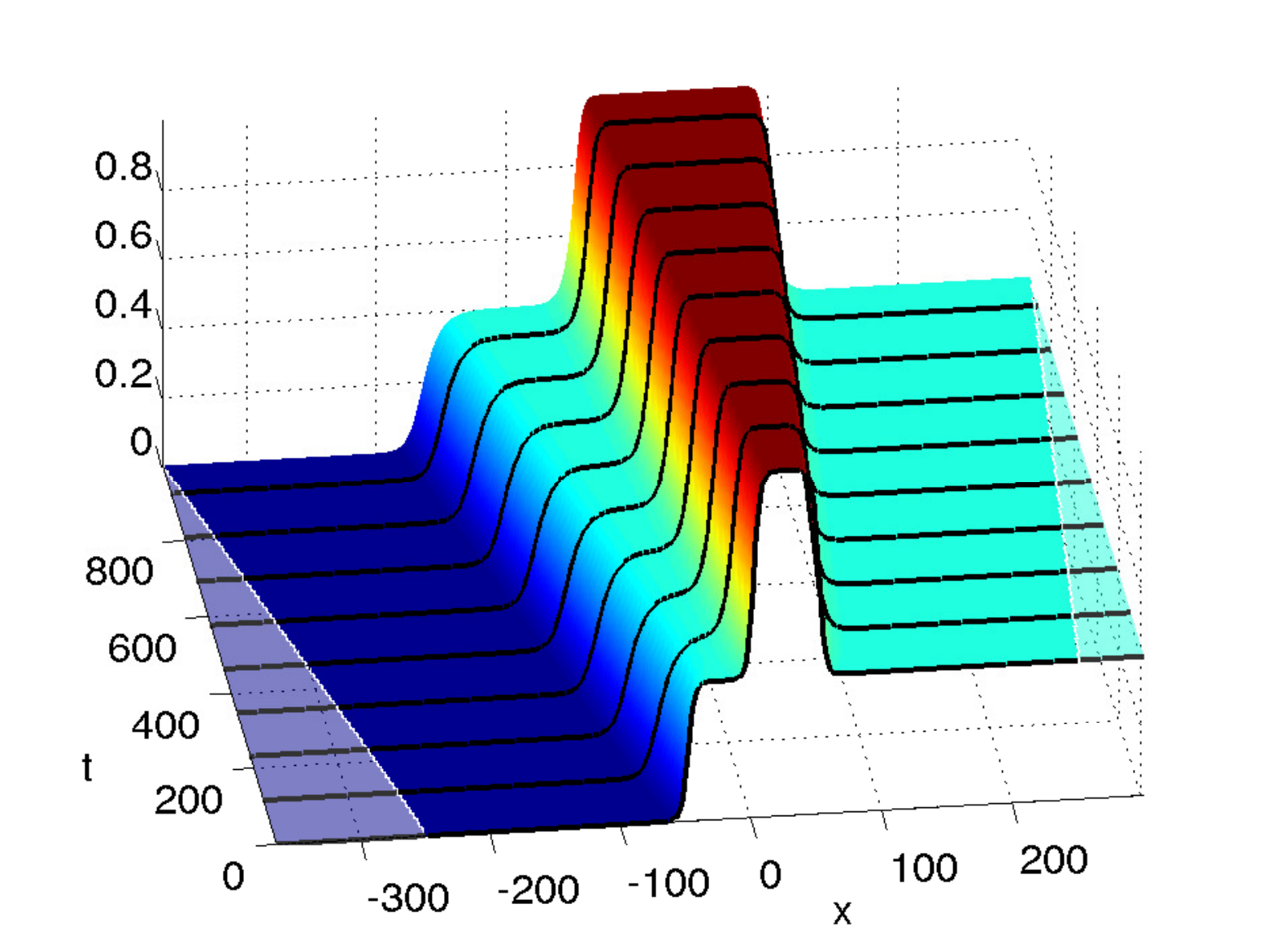}  \label{b3-fig:MW.a}}
  \hspace*{-0.45cm}
  \subfigure[]{\includegraphics[width=0.25\textwidth]{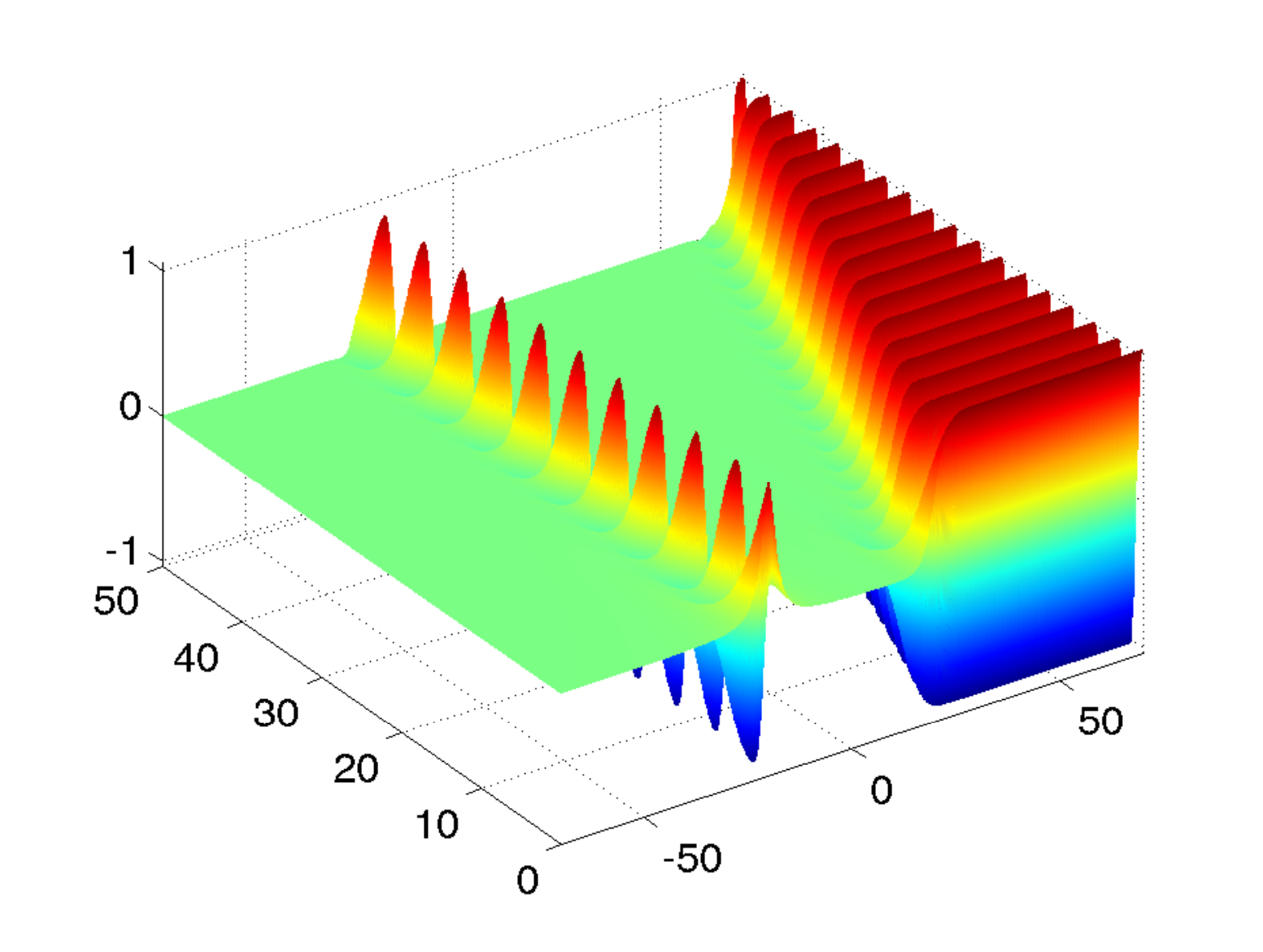}  \label{b3-fig:MW.b}}
  \hspace*{-0.45cm}
  \subfigure[]{\includegraphics[width=0.25\textwidth]{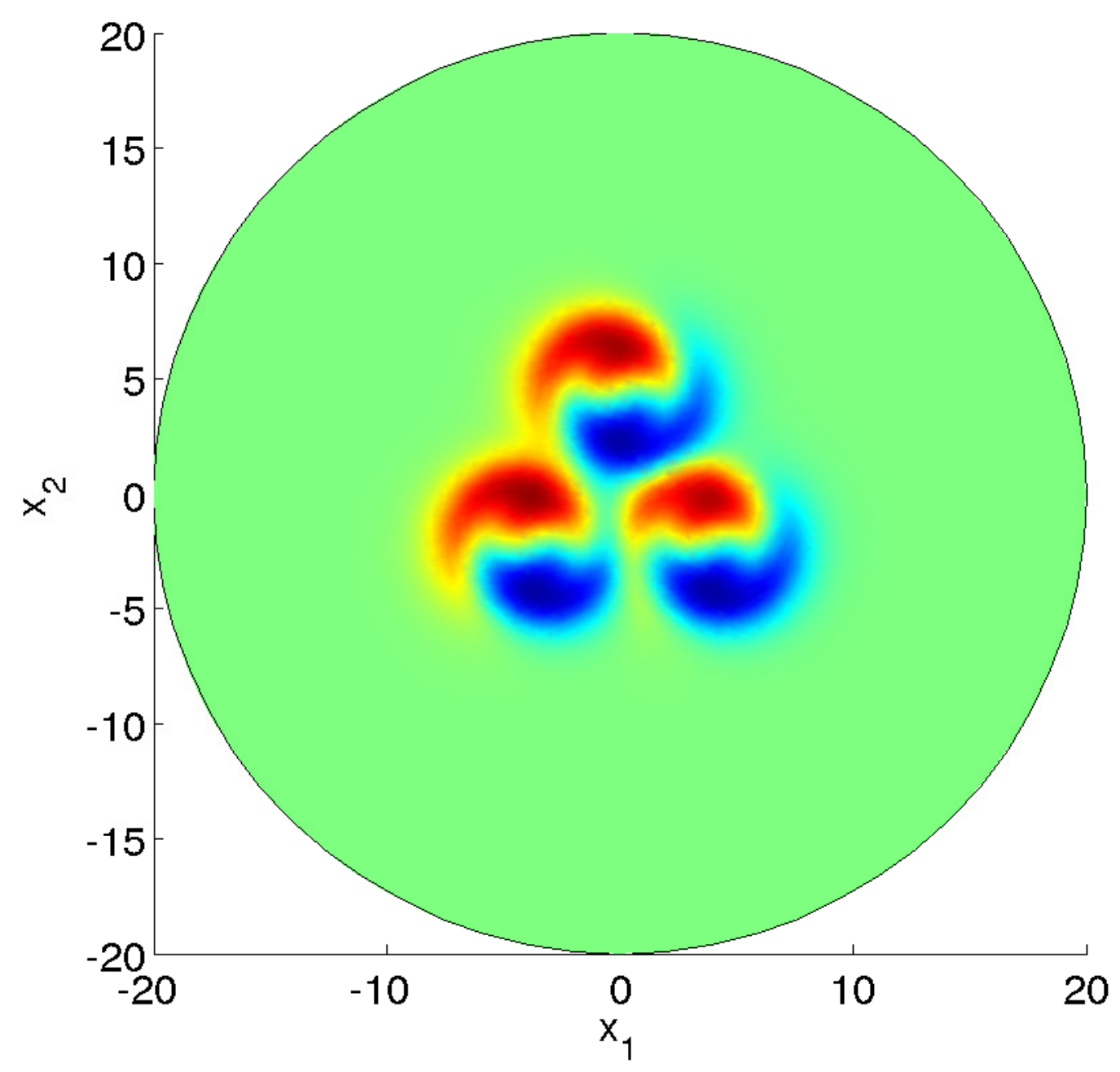}  \label{b3-fig:MW.c}}
  \hspace*{-0.45cm}
  \subfigure[]{\includegraphics[width=0.25\textwidth]{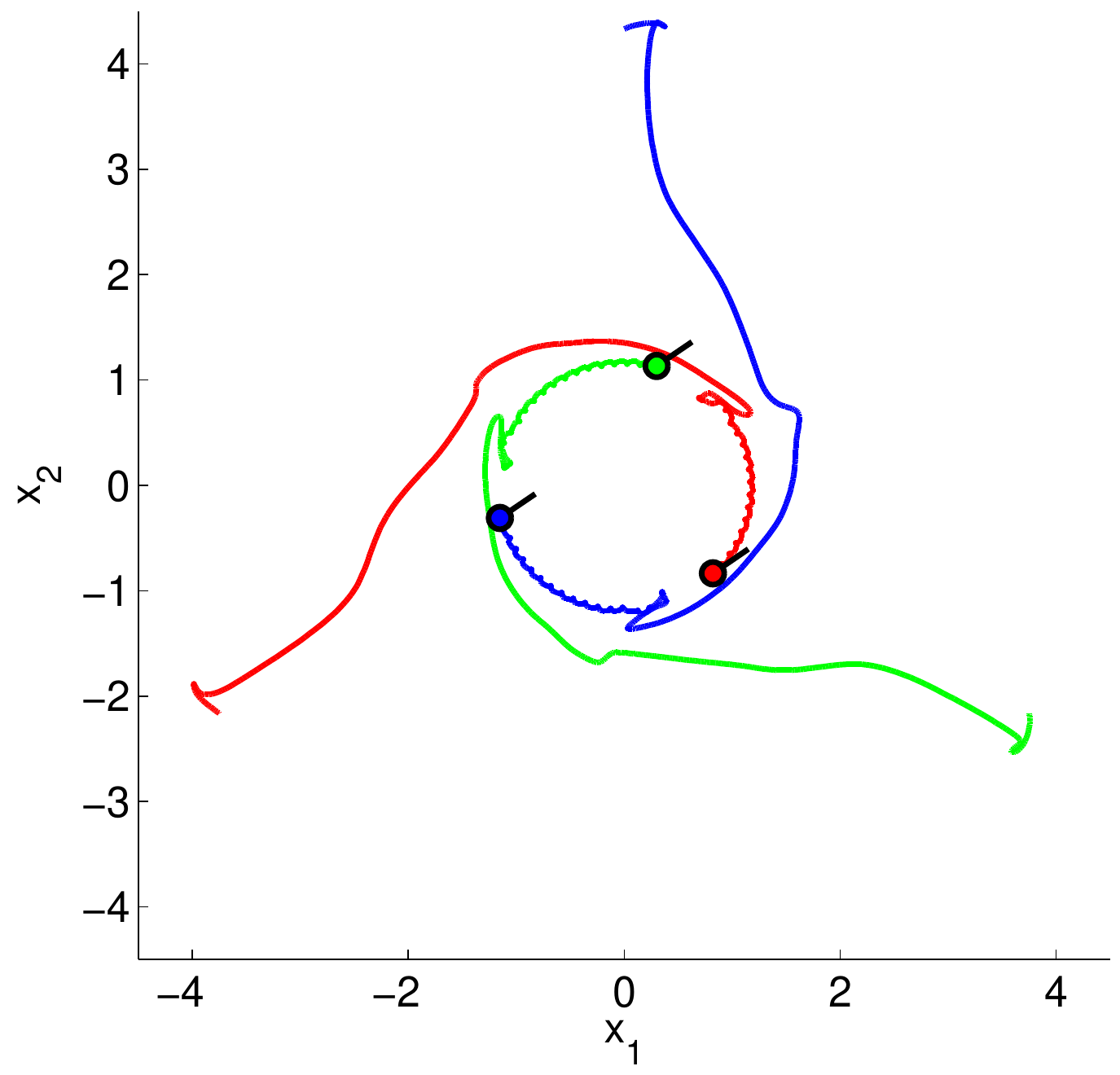}  \label{b3-fig:MW.d}}
  \caption{Multiwaves: $3$-front of QNE (a), Pulse-Front of QCGL (b), $3$-soliton of QCGL (c) and position of centres (d)}
  \label{b3-fig:MW}
\end{figure}

Travelling $2$-fronts as in Example~\ref{b3-exa:s4.1.4} are a special class of multi-waves. The decompose and freeze method (short: DFM) easily extends 
to larger numbers of fronts, e.g. $3$-fronts (see Fig.~\ref{b3-fig:MW.a}), and can be used to analyse wave interaction processes, for example repulsion and
collision of waves. 
Moreover, the DFM extends to general multi-structures, e.g. to a superposition of a phase-rotating pulse and a travelling phase-rotating front (see Fig.~\ref{b3-fig:MW.b}), 
and to higher space dimensions, see the three spinning multi-solitons in
Fig.~\ref{b3-fig:MW.c}) with the interactions represented by the traces
of their centres in Fig.~\ref{b3-fig:MW.d}. For the DFM we refer to the works \cite{b3-BOR14,b3-S09,b3-BST08}. Extensions of the DFM to rotating multi-solitons 
including numerical experiments can be found in \cite{b3-BOR14,b3-O14}.

\sect[Relative Equilibria]{Stability of Relative Equilibria}
\label{b3-s3}
The issue of stability is fundamental to all wave phenomena considered here.
Since relative equilibria come in families due to the group
action (see Section \ref{b3-s1.2}) the classical notions of (Lyapunov)-stability and asymptotic
(Lyapunov)-stability are replaced by the notions of {\it orbital stability}
and {\it stability with asymptotic phase}.
\index{stability!orbital} \index{stability!with asymptotic phase}

\subsection{Notions of stability and the co-moving frame equation}
\label{b3-s3.0}
In order to have some flexibility for the application to PDEs, the following
definition 
uses two norms $\|\cdot\|_1$ and $\|\cdot\|_2$ which need not agree with 
the norms in the Banach spaces
$Z$ and $X$. Moreover, depending on the type of PDE, a solution concept
different from the strong solution in Definition \ref{b3-d1} may
be necessary.
\begin{definition} \label{b3-d3}
A relative equilibrium $(v_{\star},\gamma_{\star})$ of \eqref{b3-e4}
is called orbitally stable with respect to norms $\|\cdot\|_1$ and
$\|\cdot\|_2$ if for any $\varepsilon > 0$ there exists $\delta >0$
such that the Cauchy problem \eqref{b3-e4} has a unique strong
solution $u$ for  $u_0\in Z$ with
$\|u_0-v_{\star}\|_1\le \delta$, and the solution satisfies
\begin{equation*} \label{b3-e4.1}
\mathrm{inf}_{\gamma \in G}\|u(t)-a(\gamma)v_{\star}\|_2 \le \varepsilon \quad
\forall t \ge 0.
\end{equation*}
It is called {\it stable with asymptotic phase} if for any $\varepsilon >0$
there exists $\delta >0$ such that for all initial values $u_0 \in Z$
with $\|u_0 - v_{\star}\|_1 \le \delta$ the Cauchy problem \eqref{b3-e4}
has a unique strong solution $u$, and
for some $\gamma_{\infty}=\gamma_{\infty}(u_0) \in G$ the solution satisfies
\begin{equation*} 
\|u(t) - a(\gamma_{\infty}\circ \gamma_{\star}(t))v_{\star}\|_2
\begin{cases} \le \varepsilon \quad \forall t \ge 0, \\
              \rightarrow 0 \quad \text{as} \quad t\rightarrow \infty.
              \end{cases}
\end{equation*}              
\end{definition}
\index{stability!orbital} \index{stability!with asymptotic phase}
Stability in general requires to investigate the solution of \eqref{b3-e4}
for initial data $u_0=u_{\star}+v_0$ which are small perturbations
of the wave profile. For this we transform into a co-moving frame via
\begin{equation*}\label{b3-e4.3}
u(t) =a(\gamma_{\star}(t))v(t), \quad t\ge0,
\end{equation*}
which by contrast to the general ansatz \eqref{b3-e20} assumes the group
orbit $\gamma_{\star}$ to be known. Instead of \eqref{b3-e21} one obtains
the {\it co-moving frame equation}
\begin{equation} \label{b3-e4.4}
v_t(t) = F(v(t)) - d_{\gamma}[a(\one)v(t)] \mu_{\star},\quad
v(0)=v_{\star}+v_0.
\end{equation}
Linearising about $v_{\star}$ in a formal sense leads to consider the linear operator
\begin{equation} \label{b3-e4.5}
\mathcal{L}w = DF(v_{\star})w - d_{\gamma}[a(\one)w]\mu_{\star}, \quad w \in Z.
\end{equation}
\index{co-moving frame equation}
If the topology on $Z$ is strong enough, then $DF$ is in fact the
Fr\'echet derivative of $F$, and this point of view is sufficient for
our applications to semi-linear PDEs in Section \ref{b3-s4}.
The general procedure then is to deduce non-linear stability in the sense of
Definition \ref{b3-d3} from spectral properties of the operator $\mathcal{L}$.
One says that the {\it principle of linearised stability} holds if such a conclusion is valid. A minimal requirement is that the spectrum lies in the left half-plane, i.e.
\begin{equation*} \label{b3-e4.6}
\sigma(\mathcal{L}) \subseteq \CC_-=\{\lambda \in \CC:\mathrm{Re}(\lambda) \le 0\}.
\end{equation*}
However the special properties of the PDEs considered here usually require more:
\begin{enumerate}
\item[(P1)]  determine eigenvalues on the imaginary axis caused by the group action,
\item[(P2)] analyse the essential spectrum
$\sigma_{\mathrm{ess}}(\mathcal{L})\subseteq \sigma(\mathcal{L})$
which arises from the loss of compactness for differential operators on unbounded domains, 
\item[(P3)]  compute isolated eigenvalues of the point spectrum
$\sigma_{\mathrm{pt}}(\mathcal{L})\subseteq \sigma(\mathcal{L})$ different from those in (P1), 
either by a theoretical or by a numerical tool.
\end{enumerate}
Let us finally note that a proof of non-linear stability becomes particularly
delicate if there is no spectral gap between the eigenvalues from (P1) and
the remaining spectrum. This occurs if the spectrum touches the
imaginary axis (wave trains, spiral waves, see
\cite{b3-DSSS09}, \cite{b3-S00}) or lies on the imaginary
axis (Hamiltonian case).

\subsection{Spectral Structures}
\label{b3-s3.1}
 Hardly
anything can be said about problems (P2), (P3) above 
within the abstract framework of equations \eqref{b3-e4},\eqref{b3-e4.4}.
However, the eigenvalues caused by symmetry have some general
structure. For this purpose recall the Lie bracket 
\index{Lie bracket}
$[\cdot,\cdot] : \fg \times \fg \rightarrow \fg$ (see e.g. 
\cite[Ch.8]{b3-FH91}) which turns $\fg=T_{\one}G$ into a Lie algebra.
The abstract definition of the bracket is in terms of the adjoint
representation $\mathrm{Ad}(g):\fg\rightarrow \fg $ of $g \in G$
given by
\begin{equation*}
\begin{aligned} 
\mathrm{Ad}(g)\nu = & d_h[g \circ h \circ g^{-1}]_{|h=\one} \nu , \quad 
\nu \in \fg, \\
[\mu,\nu]  = & d_g[\mathrm{Ad}(g)\nu]_{|g=\one}(\mu), \quad 
\mu,\nu \in \fg.
\end{aligned}
\end{equation*}
It is reasonable to look for eigenfunctions of $\mathcal{L}$ of the type
$w=d_{\gamma}[a(\one)v_{\star}]\mu, \mu \in \fg_{\CC}$, where $\fg_{\CC}$ denotes the 
complexified Lie algebra and $d_{\gamma}[a(\one)v_{\star}]$ denotes the complexified
operator. 
\begin{theorem} \label{b3-t2}
Let $v_{\star}\in Z$,$\gamma_{\star}(t)=\exp(t \mu_{\star}),t\ge 0$ be a relative equilibrium of \eqref{b3-e1} such that $d_{\gamma}[a(\one)v_{\star}]$ maps $\fg$
into $Z$.
Then $w=d_{\gamma}[a(\one)v_{\star}]\mu, \mu \in \fg_{\CC}$ solves the (complexified)
eigenvalue problem
\begin{equation} \label{b3-e4.7}
(\lambda I-\mathcal{L})w = 0
\end{equation}
if and only if $\mu$ satisfies
\begin{equation*} \label{b3-e4.8}
d_{\gamma}[a(\one)v_{\star}](\lambda \mu - [\mu, \mu_{\star}])=0.
\end{equation*}
In particular, if the stabiliser $H(v_{\star})$ is trivial (see \eqref{b3-e8a}), 
then independent eigenvectors $\mu_j,j=1,\ldots,k$ of $[\cdot,\mu_{\star}]:\fg \rightarrow \fg$
lead to independent eigenfunctions $w_j=d_{\gamma}[a(\one)v_{\star}]\mu_j,j=1,\ldots,k$
of \eqref{b3-e4.7}.
\end{theorem}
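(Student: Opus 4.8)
The plan is to reduce the eigenvalue equation \eqref{b3-e4.7} to a purely Lie-algebraic identity by exploiting the infinitesimal form of the equivariance \eqref{b3-e3}. Throughout I would write $\rho(\nu)u := d_{\gamma}[a(\one)u]\nu = \frac{d}{ds}\big|_{s=0} a(\exp(s\nu))u$ for the infinitesimal generator of the action in direction $\nu\in\fg$, so that $w = d_{\gamma}[a(\one)v_{\star}]\mu = \rho(\mu)v_{\star}$ and, by \eqref{b3-e4.5}, $\mathcal{L}w = DF(v_{\star})w - \rho(\mu_{\star})w$. The hypothesis that $d_{\gamma}[a(\one)v_{\star}]$ maps $\fg$ into $Z$ guarantees $w\in Z$, so that $\mathcal{L}w$ is well defined; moreover $\rho(\mu_{\star})v_{\star}=F(v_{\star})\in Z$ by \eqref{b3-e7}, which is precisely what makes the nested expressions below meaningful.

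First I would differentiate the equivariance relation \eqref{b3-e3}: applying $\frac{d}{ds}\big|_{s=0}$ to $F(a(\exp(s\nu))u)=a(\exp(s\nu))F(u)$ and using the chain rule yields the infinitesimal equivariance
\[
DF(u)\,\rho(\nu)u \;=\; \rho(\nu)\,F(u), \qquad \nu\in\fg,\ u\in Z.
\]
Specialising to $u=v_{\star}$, $\nu=\mu$ and inserting $F(v_{\star})=\rho(\mu_{\star})v_{\star}$ from \eqref{b3-e7} gives $DF(v_{\star})w=\rho(\mu)\rho(\mu_{\star})v_{\star}$. Substituting into the formula for $\mathcal{L}w$ and noting $\rho(\mu_{\star})w=\rho(\mu_{\star})\rho(\mu)v_{\star}$, I obtain
\[
\mathcal{L}w \;=\; \rho(\mu)\rho(\mu_{\star})v_{\star} - \rho(\mu_{\star})\rho(\mu)v_{\star} \;=\; [\rho(\mu),\rho(\mu_{\star})]\,v_{\star}.
\]
Since $a$ is a homomorphism into $GL(X)$, the generator map $\nu\mapsto\rho(\nu)$ is a Lie algebra homomorphism, i.e. $[\rho(\mu),\rho(\mu_{\star})]=\rho([\mu,\mu_{\star}])$; this is the differentiated form of the defining relations of $\mathrm{Ad}$ and $[\cdot,\cdot]$ recalled just before the theorem, where on $GL(X)$ the bracket is the operator commutator. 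Hence $\mathcal{L}w = d_{\gamma}[a(\one)v_{\star}][\mu,\mu_{\star}]$, and therefore
\[
(\lambda I-\mathcal{L})w \;=\; d_{\gamma}[a(\one)v_{\star}]\big(\lambda\mu-[\mu,\mu_{\star}]\big),
\]
from which both directions of the stated equivalence follow at once.

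For the final claim I would use that a trivial stabiliser \eqref{b3-e8a} forces $d_{\gamma}[a(\one)v_{\star}]$ to be injective on $\fg_{\CC}$, since its kernel is the complexified Lie algebra of $H(v_{\star})=\{\one\}$. If $[\mu_j,\mu_{\star}]=\lambda_j\mu_j$, then the displayed right-hand side vanishes for $\lambda=\lambda_j$, so each $w_j=d_{\gamma}[a(\one)v_{\star}]\mu_j$ solves \eqref{b3-e4.7}; injectivity together with the linear independence of the $\mu_j$ then transfers to independence (and non-triviality) of the $w_j$.

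The main obstacle I anticipate is not the algebra but the regularity bookkeeping required to justify the two differentiation steps, given that the action is only differentiable on the smooth subspace (Assumption \ref{b3:a1}) and that $DF(v_{\star})$ and $\rho(\mu_{\star})$ are unbounded. Concretely, interchanging $DF$ with $\frac{d}{ds}$ needs $s\mapsto a(\exp(s\nu))u$ to be $Z$-valued differentiable, and forming $\rho(\mu)\rho(\mu_{\star})v_{\star}$ needs $\rho(\mu_{\star})v_{\star}\in Z$; both are supplied exactly by the hypothesis that $d_{\gamma}[a(\one)v_{\star}]$ maps $\fg$ into $Z$ together with \eqref{b3-e7}. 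I would therefore state these mapping properties explicitly, interpret the homomorphism identity on the common smooth domain, and verify the chain-rule manipulations in the appropriate topology before carrying out the short Lie-algebraic computation.
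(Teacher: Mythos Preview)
Your argument is correct and reaches the same identity
\[
DF(v_{\star})\,d_{\gamma}[a(\one)v_{\star}]\mu
= d_{\gamma}\!\big[a(\one)(d_{\gamma}[a(\one)v_{\star}]\mu)\big]\mu_{\star}
+ d_{\gamma}[a(\one)v_{\star}][\mu,\mu_{\star}]
\]
as the paper, but you organise the derivation differently. You differentiate the equivariance relation \eqref{b3-e3} along a one-parameter subgroup to get the infinitesimal equivariance $DF(u)\rho(\nu)u=\rho(\nu)F(u)$, then invoke that $\nu\mapsto\rho(\nu)$ is a Lie algebra homomorphism to convert the operator commutator into $\rho([\mu,\mu_{\star}])$. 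The paper instead starts from the $g$-parametrised family of relative equilibria \eqref{b3-e8b}, obtains the single identity $F(a(g)v_{\star})=d_{\gamma}[a(\one)(a(g)v_{\star})]\,\mathrm{Ad}(g)\mu_{\star}$, and differentiates once in $g$ at $g=\one$; the bracket $[\mu,\mu_{\star}]$ then appears directly as the derivative of $\mathrm{Ad}(g)\mu_{\star}$, so no separate appeal to the homomorphism property of the derived action is needed. Your route is slightly more elementary (only one-parameter subgroups are used) but requires you to justify the commutator identity on the common smooth domain, which you rightly flag; the paper's route packages this into the definition of $\mathrm{Ad}$ and a single product-rule step. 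Either way the algebra and the regularity bookkeeping you identify are exactly what is required, and your treatment of the final independence claim via injectivity of $d_{\gamma}[a(\one)v_{\star}]$ matches the intended use of the trivial-stabiliser hypothesis.
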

\begin{proof}
For the family of relative equilibria \eqref{b3-e8b} we have by the chain rule
\begin{align*}
F(a(\gamma(g,t))a(g)v_{\star})= & \frac{d}{dt}\left[ a(\gamma(g,t))(a(g)v_{\star})\right] \\
= & d_{\gamma}\left[a(g \circ \gamma_{\star}(t) \circ g^{-1})(a(g)v_{\star})\right]
d_h(g \circ h \circ g^{-1})_{|h=\gamma_{\star}(t)}\gamma'_{\star}(t),
\end{align*}
which upon evaluation at $t=0$ yields
\begin{align*}
F(a(g)v_{\star})= & d_{\gamma}\left[ a(\one)(a(g)v_{\star})\right]\mathrm{Ad}(g) \mu_{\star}.
\end{align*}
We differentiate with respect to $g\in G$ and apply this to $\mu\in T_{g}G$:
\begin{align*}
DF(a(g)v_{\star})d_{\gamma}[a(g)v_{\star}]\mu = &
d_{\gamma}[a(\one)(d_{\gamma}[a(g)v_{\star}] \mu)]\mathrm{Ad}(g) \mu_{\star}\\
+ & d_{\gamma}[a(\one)(a(g)v_{\star})] d_{g}[\mathrm{Ad}(g) \mu_{\star}] \mu,
\end{align*}
which upon evaluation at $g=\one, \mu \in \fg$ gives
\begin{align*}
DF(v_{\star})d_{\gamma}[a(\one)v_{\star}]\mu = &
d_{\gamma}[a(\one)(d_{\gamma}[a(\one)v_{\star}]\mu)]\mu_{\star}
+ d_{\gamma}[a(\one)v_{\star}][\mu,\mu_{\star}].
\end{align*}
Therefore, the eigenvalue problem \eqref{b3-e4.7} with
$w=d_{\gamma}[a(\one)v_{\star}]\mu$ is equivalent to
\begin{align*}
0= & \lambda w - DF(v_{\star})w + d_{\gamma}[a(\one)w]\mu_{\star} \\
= & \lambda d_{\gamma}[a(\one)v_{\star}]\mu - DF(v_{\star})d_{\gamma}[a(\one)v_{\star}]\mu
+ d_{\gamma}[a(\one)(d_{\gamma}[a(\one)v_{\star}]\mu)]\mu_{\star} \\
= & d_{\gamma}[a(\one)v_{\star}](\lambda \mu - [\mu,\mu_{\star}]),
\end{align*}
which proves our assertion.
\end{proof}
Theorem \ref{b3-t2} shows  that the geometric multiplicity
of the eigenvalue $\lambda=0$ is at least the dimension of the centraliser
of $\mu_{\star}$ given by
\begin{equation*} \label{b3-e4.9}
\fg_0(\mu_{\star}):= \{ \mu\in \fg: [\mu,\mu_{\star}] = 0 \}.
\end{equation*}
\index{centraliser}
If the group $G$ is represented as a subgroup of the matrix
group $\mathrm{GL}(\RR^N)$ for some $N \in \NN$ then the Lie bracket agrees with
the commutator. It is not difficult to see that the spectrum of the linear map
$\mu \mapsto [\mu,\mu_{\star}]$ always satisfies
\begin{equation} \label{b3-e4.10}
\sigma([\cdot,\mu_{\star}]) \subseteq \{\lambda_1 - \lambda_2: \lambda_1,\lambda_2 \in
\sigma(\mu_{\star})\} = \sigma(\mu_{\star}) - \sigma(\mu_{\star}).
\end{equation}
The special elements  $\mu_{\star}=\left( \begin{smallmatrix} S_{\star}&c_{\star}\\0 & 0 \end{smallmatrix} \right)$ from $\mathfrak{se}(d)$ (see \eqref{b3-e14})
occur with rotating waves \eqref{b3-e18} and satisfy
$\sigma(\mu_{\star}) \subseteq i \RR$ as well as $\sigma(\mu_{\star})=-\sigma(\mu_{\star})$.
Let $ \mu_1,\ldots,\mu_d $ be the eigenvalues of the skew-symmetric
matrix $S_{\star}$, then one finds
\begin{equation} \label{b3-e4.11}
\sigma([\cdot,\mu_{\star}]) = \{ \mu \in \CC: \mu \in  \sigma(S_{\star}) \;
\text{or}\; \mu=\mu_j+\mu_k\; \text{for some}\; j<k\},
\end{equation}
see \cite{b3-BI05},\cite{b3-BO16b} for the computation of eigenvalues
and corresponding eigenvectors.
\subsection{Stability with Asymptotic Phase}
\label{b3-s3.3}
We discuss sufficient conditions for the stability with asymptotic phase in case of our two model equations \eqref{b3-e9},\eqref{b3-e10} (see 
\cite{b3-BOR14},\cite{b3-H81},\cite{b3-KP13},\cite{b3-S02},\cite{b3-T08}).

For a travelling wave $(v_{\star},\mu_{\star})$ the linearised operator
$\mathcal{L}$ from \eqref{b3-e4.5} reads
\begin{equation} \label{b3-e4.12}
\begin{aligned}
\mathcal{L}w = &A w_{\xi \xi} +(\mu_{\star}I_m+ D_2f(v_{\star},v_{\star,\xi}))w_{\xi} + D_1f(v_{\star},v_{\star,\xi})w \\
= & A w_{\xi \xi} + B(\cdot)w_{\xi} + C(\cdot) w.
\end{aligned}
\end{equation}
We consider the case of a front
\begin{equation} \label{b3-e4.13}
\lim_{\xi \rightarrow \pm \infty} v_{\star}(\xi) = v_{\pm},
\quad \lim_{\xi \rightarrow \pm \infty} v_{\star,\xi}(\xi)=0,
\end{equation}
which is covered by our abstract approach only in case $v_{\pm}=0$, see Remark
\ref{b3-rem1}. Note, however, that $\mathcal{L}:H^2(\RR,\RR^m) \to L^2(\RR,\RR^m)$ is well
defined in the general case \eqref{b3-e4.13}, and that it has the eigenvalue
 $0$ with eigenfunction $w=v_{\star,\xi}$, cf. Theorem \ref{b3-t2} and
 \eqref{b3-e4.10} with $0 \in \sigma(\mu_{\star})$. The essential spectrum of
$\mathcal{L}$ is determined by the constant coefficient operators
\begin{equation} \label{b3-e4.13a}
\mathcal{L}^{\pm}= A \partial_{\xi}^2 + B_{\pm}\partial_{\xi} + C_{\pm},
\quad C_{\pm}=D_1f(v_{\pm},0),
 \quad B_{\pm}= \mu_{\star}I_m + D_2f(v_{\pm},0).
\end{equation}
Bounded solutions of $(\lambda I-\mathcal{L}^{\pm})w=0$ are of the form $w(\xi)
=e^{i \omega \xi},\omega \in \RR$ which leads to the definition of the dispersion set
\begin{equation} \label{b3-e4.14}
\sigma_{\mathrm{disp}}(\mathcal{L})= 
\left\{\lambda \in \CC: \lambda\in \sigma(-\omega^2A+i \omega B_{\pm}+C_{\pm}) \;
\text{for some sign} \; \pm \;\text{and} \; \omega \in \RR  \right\}.
\end{equation}
\index{dispersion set}
By Weyl's theorem on invariance of the essential spectrum under relatively
compact perturbations (see \cite{b3-H81},\cite{b3-KP13}) one finds
$\sigma_{\mathrm{disp}}(\mathcal{L}) \subseteq \sigma_{\mathrm{ess}}(\mathcal{L})$
and, moreover, that the connected component $U$ of
$\CC \setminus \sigma_{\mathrm{disp}}(\mathcal{L})$ containing a positive real semi-axis
satisfies $U \subseteq (\rho(\mathcal{L})\cup \sigma_{\mathrm{pt}}(\mathcal{L}))$.
Therefore, the issues (P2) and (P3) from Section \ref{b3-s3.0} are resolved
by requiring for some $\beta >0$ the  following spectral conditions
\begin{equation} \label{b3-e4.15}
\mathrm{Re}\, \sigma_{\mathrm{disp}}(\mathcal{L}) \le - \beta < 0,
\end{equation}
\begin{equation} \label{b3-e4.16}
\mathrm{Re}\left(\sigma_{\mathrm{pt}}(\mathcal{L})\setminus \{0\}\right) \le
- \beta<0 \quad
\text{and the eigenvalue} \;  0  \; \text{is simple}.
\end{equation}
A common analytical tool to verify assumption \eqref{b3-e4.16} in applications
is to study the zeroes of the so-called {\it Evans function}, see
\cite{b3-KP13},\cite{b3-S02}. For numerical purposes however, we prefer
to solve boundary eigenvalue problems subject to finite boundary conditions
and to
employ a contour method, see Section \ref{b3-s5} and \cite{b3-BLR14}.
\begin{theorem} \label{b3-t3}
Let the spectral assumptions \eqref{b3-e4.15},\eqref{b3-e4.16} above hold
and let $f$ be of the form
\begin{equation} \label{b3-e4.18}
f(u,v)= f_1(u)+ f_2(u)v, \quad f_1 \in C^2(\RR^m,\RR^m), f_2 \in
C^2(\RR^m,\RR^{m,m}).
\end{equation}
Then a travelling wave $(v_{\star},\mu_{\star})$ of \eqref{b3-e9}
is stable with asymptotic phase  for solutions in  the regularity class
$v_{\star}+ \left( C([0,\infty),H^1(\RR,\RR^m)) \cap
C^1([0,\infty),L^2(\RR,\RR^m))\right)$
and with respect to the norm $\|\cdot\|_1=\|\cdot\|_2 =
\|\cdot \|_{H^1}$.
\end{theorem}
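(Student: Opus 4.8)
The plan is to pass to the co-moving frame \eqref{b3-e4.4}, in which $v_\star$ and in fact every translate $v_\star(\cdot-s)$ is a steady state, the family being tangent at $s=0$ to the neutral direction $v_{\star,\xi}$ that spans $\ker\mathcal{L}$. First I would check that $\mathcal{L}$ from \eqref{b3-e4.12} generates an analytic semigroup on $L^2(\RR,\RR^m)$ with domain $H^2(\RR,\RR^m)$: this follows from parabolicity ($\Real\sigma(A)>0$) together with the fact that $B(\cdot),C(\cdot)$ have bounded coefficients (as $v_\star,v_{\star,\xi}$ are bounded), so that $B\partial_\xi+C$ is a relatively bounded perturbation of $A\partial_\xi^2$. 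The hypotheses \eqref{b3-e4.15}, \eqref{b3-e4.16} confine $\sigma(\mathcal{L})$ to $\{0\}\cup\{\Real\lambda\le-\beta\}$ with $0$ isolated and algebraically simple. A Riesz projection $P$ around $0$ then splits the space into $\ker\mathcal{L}=\mathrm{span}\{v_{\star,\xi}\}$ and the complementary subspace $\range(Q)$, $Q=I-P$, and the spectral gap together with analyticity yields, for any fixed $0<\beta'<\beta$, the decay-with-smoothing estimate $\|e^{t\mathcal{L}}Qw\|_{H^1}\le M(1+t^{-1/2})e^{-\beta't}\|w\|_{L^2}$.

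To absorb the neutral eigenvalue I would modulate the phase, writing the co-moving solution as $v(\xi,t)=v_\star(\xi-\sigma(t))+w(\xi,t)$ and imposing the transversality condition $\langle\psi_0(\cdot-\sigma(t)),w(\cdot,t)\rangle=0$, where $\psi_0$ spans $\ker\mathcal{L}^\ast$; solvability of this condition for $\sigma$ in a tube around the orbit is an implicit-function step. Because each $v_\star(\cdot-\sigma)$ is an \emph{exact} steady state, substituting the ansatz and expanding produces a coupled system in which all drift along the orbit is carried by $\sigma$: the transverse part satisfies $w_t=\mathcal{L}w+Q\mathcal{N}(w)+R$ with linear part decaying like $e^{-\beta't}$ and remainder $R$ proportional to $\dot\sigma$, while the scalar phase obeys $\dot\sigma=\langle\psi_0(\cdot-\sigma),\mathcal{N}(w)\rangle+O(|\dot\sigma|\,\|w\|)$. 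The decisive gain is that $\mathcal{N}$ is genuinely quadratic in the single decaying quantity $w$, so the right-hand side of the $\sigma$-equation is $O(\|w\|^2)$ and hence integrable once $w$ is shown to decay.

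The analytic core is the bound on $\mathcal{N}$ and the closing of a contraction argument. Using the structure \eqref{b3-e4.18}, the remainder splits as $\mathcal{N}(w)=[f_1(v_\star+w)-f_1(v_\star)-Df_1(v_\star)w]+O(\|w\|^2)v_{\star,\xi}+Df_2(v_\star)[w]w_\xi+\text{h.o.t.}$, each term quadratic in $w$ as a map $H^1\to L^2$. The sensitive term is $Df_2(v_\star)[w]w_\xi$, which loses a derivative; in one space dimension, however, the Sobolev embedding $H^1\hookrightarrow L^\infty$ gives $\|Df_2(v_\star)[w]w_\xi\|_{L^2}\le C\|w\|_{L^\infty}\|w_\xi\|_{L^2}\le C\|w\|_{H^1}^2$, which is precisely why the $H^1$-class and the affine-in-$u_x$ form of $f$ are assumed. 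I would then solve $w(t)=e^{t\mathcal{L}}Qw(0)+\int_0^te^{(t-s)\mathcal{L}}Q\mathcal{N}(w(s))\,ds$ by contraction in the weighted space with norm $\sup_{t\ge0}e^{\beta't}\|w(t)\|_{H^1}$, the point being that the singular kernel is integrable: $\int_0^t(1+(t-s)^{-1/2})e^{-\beta'(t-s)}e^{-2\beta's}\,ds\le Ce^{-\beta't}$.

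With $\|w(t)\|_{H^1}\le Ce^{-\beta't}\|w(0)\|_{H^1}$ secured for small data, integrability of $\dot\sigma=O(e^{-2\beta't})$ produces a finite limit $\sigma(t)\to\sigma_\infty$, whence $v(\cdot,t)\to v_\star(\cdot-\sigma_\infty)$ in $H^1$; undoing the frame change via $u(x,t)=v(x-\mu_\star t,t)$ gives stability with asymptotic phase in the sense of Definition \ref{b3-d3} with $\gamma_\infty=\sigma_\infty$. The hard part will be making the phase modulation and the nonlinear estimate cooperate: one must verify that the moving transversality condition is solvable, that the $\dot\sigma$-dependent terms $R$ in the $w$-equation are truly higher order (so they do not spoil the exponential decay), and that the quasilinear-looking term $Df_2(v_\star)[w]w_\xi$ is absorbed by the parabolic smoothing estimate in spite of its derivative loss and the $t^{-1/2}$ singularity in the convolution kernel.
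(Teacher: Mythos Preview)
The paper does not give its own proof of Theorem~\ref{b3-t3}; it states the result and, in the remark immediately following, defers to the literature: \cite{b3-H81,b3-KP13,b3-S02} for the semilinear case $f_2\equiv 0$ and \cite{b3-T05,b3-T08} for the affine-in-$u_x$ extension, noting only that the global Lipschitz hypotheses there can be localised via the embedding $H^1(\RR,\RR^m)\hookrightarrow L^\infty(\RR,\RR^m)$. Your sketch is precisely the strategy of those references---analytic semigroup on $L^2$ with domain $H^2$, Riesz projection onto the simple kernel spanned by $v_{\star,\xi}$, phase modulation to remove the neutral direction, and a Duhamel contraction using the parabolic $L^2\to H^1$ smoothing estimate---so you are reproducing the intended argument by proxy. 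Your isolation of the term $Df_2(v_\star)[w]\,w_\xi$ and its control via $\|w\|_{L^\infty}\|w_\xi\|_{L^2}\le C\|w\|_{H^1}^2$ is exactly the point the paper's remark highlights as the reason the $H^1$ setting and the form \eqref{b3-e4.18} are imposed.

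One small caution for when you write it out in full: because you modulate, the linear part acting on $w$ is really $\mathcal{L}_{\sigma(t)}$ (the linearisation about $v_\star(\cdot-\sigma(t))$), not the fixed $\mathcal{L}$. You will need either to treat $(\mathcal{L}_{\sigma}-\mathcal{L})w$ as an additional small term of size $O(|\sigma|\,\|w\|_{H^1})$, or to work with the shifted projection $Q_\sigma$ throughout; both routes close, but the bookkeeping should be made explicit so that the ``remainder $R$ proportional to $\dot\sigma$'' and the $\sigma$-dependence of the coefficients do not silently reintroduce a linear-in-$w$ term into the transverse equation.
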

\begin{remark} The semilinear case $f_2 \equiv 0$ is well studied, see e.g.
\cite{b3-H81},\cite{b3-KP13},\cite{b3-S02}. The more general form
\eqref{b3-e4.18} includes Burgers equation ($f_2(u)=u$) and is treated
in \cite{b3-T05}, \cite{b3-T08}. Note that the global Lipschitz conditions
imposed there can be localised via the Sobolev embedding $H^1(\RR,\RR^m)
\subset L^{\infty}(\RR,\RR^m)$.
\end{remark}
In Sections \ref{b3-s4.2} and \ref{b3-s4.3} we referred to stability results
for travelling waves in hyperbolic systems of first order
\eqref{b3-e4.2.1}, \eqref{b3-e4.2.3} and of second
order \eqref{b3-e4.3.1}. Here we consider in more detail the stability
of rotating waves for the model system \eqref{b3-e10}.
Following \cite{b3-BL08} we restrict to $d=2$ and $A=I_m$. Extensions
to $d \ge 3$ are based on \cite{b3-BO16a},\cite{b3-BO16b} and will be
indicated below. Moreover, we mention an alternative approach \cite{b3-SSW97}
towards asymptotic stability (without asymptotic phase) based on a
centre manifold reduction.

As in \eqref{b3-e18} consider a rotating wave
$v_{\star} \in H^2_{\mathrm{Eucl}}(\RR^2,\RR^m)$ centred at $x_{\star}=0$ and
with $S_{\star}= \left( \begin{smallmatrix} 0 & - \mu_{\star} \\ \mu_{\star} & 0
               \end{smallmatrix} \right), \mu_{\star} \neq 0$.
We assume decay of derivatives up to order $2$
\begin{equation*} \label{b3-e4.19}
\sup_{|\xi| \ge R} |D^{\alpha}v_{\star}(\xi)| \to 0 \quad \text{as} \; R \to \infty
\quad \text{for} \; |\alpha| \le 2
\end{equation*}
and stability of the linearisation at infinity in the sense of
\begin{equation} \label{b3-e4.20}
\mathrm{Re} \, \langle Df(0) w, w \rangle \le - \beta |w|^2 \quad \text{for all}
\; w \in \CC^m \quad \text{and some} \; \beta >0.
\end{equation}
This assumption guarantees that the essential spectrum of the linear operator
$\mathcal{L}: H^2_{\mathrm{Eucl}}(\RR^2,\RR^m) \to L^2(\RR^2,\RR^m)$ defined by
\begin{equation} \label{b3-e4.21}
\mathcal{L} v = \Delta v + \mathcal{L}_{S_{\star}}v + Df(v_{\star})v,
\end{equation}
lies in the open left half plane. As for the abstract result \eqref{b3-e4.11}
one finds that $\mathcal{L}$ has eigenvalues $0,\pm i \mu_{\star}$ with
corresponding eigenfunctions $\mathcal{L}_{S_{\star}}v_{\star}$ and
$D_1 v_{\star} \pm i D_2 v_{\star}$. The appropriate assumption
on the point spectrum of $\mathcal{L}$ then is to require that
for some $\beta >0$ (which agrees w.l.o.g with $\beta$ from  \eqref{b3-e4.20}):
\begin{equation*} \label{b3-e4.22}
 \text{The eigenvalues} \ 0,\pm i \mu_{\star} \;
\text{are simple and the only ones of} \; \mathcal{L}\; \text{with} \;
\mathrm{Re} \ge - \beta.
\end{equation*}
\begin{theorem} \label{b3-t4}
Let $f \in C^4(\RR^m,\RR^m)$ and let the rotating wave $(v_{\star},S_{\star})$
satisfy the spectral assumptions above. Then the rotating wave is
asymptotically stable with asymptotic phase  for the equation
\eqref{b3-e10} with initial data $u_0 \in H^2_{\mathrm{Eucl}}(\RR^2,\RR^m)$,
for strong solutions in the function class
$C^1([0,\infty),L^2(\RR^2,\RR^m)) \cap C([0,\infty), H^2(\RR^2,\RR^m))$, and with
respect to the norms $\|\cdot\|_1 = \|\cdot\|_{H^2_{\mathrm{Eucl}}}$,
$\|\cdot\|_2 = \|\cdot\|_{H^2}$.
\end{theorem}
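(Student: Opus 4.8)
The plan is to follow the co-moving frame reduction set up in Section~\ref{b3-s3.0} together with a spectral decomposition adapted to the three-dimensional group orbit, in the spirit of \cite{b3-BL08}. First I would pass to the rotating frame $u(t)=a(\gamma_\star(t))v(t)$ with $\gamma_\star(t)=\exp(tS_\star)$, so that $v_\star$ becomes a steady state of the co-moving frame equation \eqref{b3-e4.4} and the linearisation at $v_\star$ is exactly the operator $\mathcal{L}$ from \eqref{b3-e4.21}. The first task is to show that $\mathcal{L}$, with domain $H^2_{\mathrm{Eucl}}(\RR^2,\RR^m)$, generates a strongly continuous semigroup on $L^2(\RR^2,\RR^m)$. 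Here I would exploit the explicit structure of the principal part: the drift $\mathcal{L}_{S_\star}$ generates the isometric rotation group, $\Delta$ commutes with rotations and generates the analytic heat semigroup, so the free part $\Delta+\mathcal{L}_{S_\star}$ generates a rotated heat semigroup, and $Df(v_\star)$ enters as a bounded, relatively $\mathcal{L}$-compact perturbation.

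Next I would carry out the spectral split. By the assumption \eqref{b3-e4.20} the essential spectrum of $\mathcal{L}$ lies in the open left half plane (this uses the spectral structure of the matrix Ornstein--Uhlenbeck operator governing the behaviour at infinity), and by the point-spectrum hypothesis the part of $\sigma(\mathcal{L})$ with $\mathrm{Re}\ge -\beta$ consists exactly of the simple eigenvalues $0,\pm i\mu_\star$. The associated spectral projection $P$ then has rank three, and by Theorem~\ref{b3-t2} together with \eqref{b3-e4.11} its range is spanned by the group-orbit derivatives $\mathcal{L}_{S_\star}v_\star$ and $D_1v_\star\pm iD_2v_\star$, i.e. the tangent space to $\mathcal{O}_G(v_\star)$ at $v_\star$. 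On the complementary stable subspace $X_s=\ker P$ I would establish the decay estimate $\|e^{t\mathcal{L}}(I-P)\|\le Ce^{-\beta' t}$ for some $0<\beta'<\beta$.

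Then comes the nonlinear argument. I would parametrise a tubular neighbourhood of the orbit and write the solution in the rotating frame as $v(t)=a(g(t))(v_\star+w(t))$ with $g(t)\in\mathrm{SE}(2)$ and $w(t)\in X_s$ chosen so that $Pw(t)=0$; the implicit function theorem makes this decomposition well defined near $v_\star$. Differentiating produces a coupled system: an ODE for the phase $\dot g$ driven by the quadratic remainder, and an evolution equation $w_t=\mathcal{L}w+N(w,\dot g)$ for the transverse part, where $N$ collects the nonlinear terms and the phase coupling. Using $f\in C^4$ and the Sobolev embedding $H^2(\RR^2)\hookrightarrow L^\infty(\RR^2)$ to control $N$ quadratically in $w$, a variation-of-constants estimate together with the linear decay on $X_s$ and a fixed-point or Gronwall closure yields $w(t)\to0$ exponentially and $g(t)\to g_\infty$, which identifies the asymptotic phase $\gamma_\infty$.

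The hard part will be the linear decay estimate on $X_s$: because $\mathcal{L}_{S_\star}$ has unbounded coefficients, $e^{t\mathcal{L}}$ is not analytic and the spectral mapping theorem is not available a priori, so the growth bound on $X_s$ must be extracted directly from the rotated-heat-semigroup structure and from resolvent estimates for the Ornstein--Uhlenbeck operator rather than from the spectral bound alone. A second technical point is the mismatch between the two norms: the initial perturbation is measured in $\|\cdot\|_{H^2_{\mathrm{Eucl}}}$ while decay is claimed in $\|\cdot\|_{H^2}$, so the $H^2_{\mathrm{Eucl}}$-regularity of the data must be used to guarantee existence of the strong solution in the stated class and to feed the smoothing of the semigroup, whereas the weaker $H^2$-norm suffices for the asymptotic statement. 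The extension to $d\ge3$ then follows the same scheme, using the sharper Ornstein--Uhlenbeck spectral and semigroup estimates of \cite{b3-BO16a,b3-BO16b}.
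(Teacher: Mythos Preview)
The paper does not supply a proof of Theorem~\ref{b3-t4}; it is stated as a result taken from \cite{b3-BL08}, followed only by remarks on the hypotheses and on extensions to $d\ge 3$ via \cite{b3-BO16a,b3-BO16b}. So there is nothing to compare your argument against line by line.

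That said, your outline is faithful to the strategy of \cite{b3-BL08}: pass to the co-moving frame, split off the three-dimensional centre subspace generated by the group action, prove exponential decay of the semigroup on the complementary subspace, and close via a nonlinear coordinate system on a tube around the group orbit. You also correctly flag the main technical obstruction, namely that the drift $\mathcal{L}_{S_\star}$ has linearly growing coefficients, so $e^{t\mathcal{L}}$ is not analytic and the decay on $X_s$ cannot be read off from the spectral bound via a spectral mapping theorem; in \cite{b3-BL08} this is handled by writing the semigroup of the principal part $\Delta+\mathcal{L}_{S_\star}$ explicitly as a rotated heat kernel and deriving the necessary smoothing and decay estimates from that representation together with the dissipativity assumption \eqref{b3-e4.20}. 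One refinement worth noting: the decomposition $v(t)=a(g(t))(v_\star+w(t))$ with $Pw=0$ is not quite what is done there, since the spectral projection $P$ is defined at the fixed point $v_\star$ and does not commute with the action $a(g)$; instead one works with a section transverse to the orbit at $v_\star$ and lets the phase $g(t)$ absorb the centre component, which is the content of your implicit-function-theorem step but should be set up before invoking $P$. Apart from this bookkeeping, your sketch captures the essential structure of the proof.
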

Let us comment on the assumptions of this theorem and possible extensions.
In \cite[Cor.4.3]{b3-BO16a} it is shown that the derivatives $D^{\alpha}v_{\star}$,$1 \le |\alpha|\le 2$ of the solution decay even exponentially as $R \to \infty$
if \eqref{b3-e4.20} holds and if $\sup_{|\xi| \ge R}|v_{\star}(\xi)|$ falls below
a certain computable threshold. Moreover, according to \cite[Theorem 2.8]{b3-BO16b} the operator
$\lambda I - \mathcal{L}:H^2_{\mathrm{Eucl}}(\RR^2,\RR^m)\to L^2(\RR^2,\RR^m)$ is
Fredholm of index $0$ for values $\mathrm{Re}(\lambda)
> - \beta$. Hence the eigenvalues $0,\pm i \mu_{\star}$ are isolated and
of finite multiplicity. These results generalise to arbitrary space
dimensions $d \ge 3$ if the non-linearity and the solution $v_{\star}$ are
sufficiently smooth. Then it can also be shown that the eigenfunctions
which belong to eigenvalues on the imaginary axis and which are
induced by symmetry, decay exponentially
in space.
This suggests that the non-linear stability
Theorem \ref{b3-t4} genereralises to space dimensions $d \ge 3$ , but details
have not been worked out yet.
\subsection{Lyapunov Stability of the Freezing Method}
\label{b3-s3.4}
The numerical experiments in Section \ref{b3-s4} confirm for various types
of PDEs that the abstract freezing system \eqref{b3-e21},\eqref{b3-e24} has
a Lyapunov-stable equilibrium whenever the original equation \eqref{b3-e1}
has a relative equilibrium which is stable with asymptotic phase. Moreover,
one expects this property to persist under numerical approximations,
such as truncation to a bounded domain with suitable boundary conditions
as well as discretisations of space and time. In this section we  discuss
a few instances where corresponding analytical results are available.

The following result for travelling waves is taken from \cite[Theorem\,1.13]{b3-T05}.
\begin{theorem} \label{b3-t5}
Let the assumptions of Theorem \ref{b3-t3} hold and let the template function
$\hat{v}$ in \eqref{b3-equ:s4.1.3} satisfy
\begin{equation*} \label{b3-e4.2}
\hat{v} \in v_{\star}+H^2(\RR,\RR^m), \quad \langle \hat{v}_{\xi},v_{\star}-\hat{v} \rangle_{L^2}=0, \quad \langle \hat{v}_{\xi}, v_{\star,\xi} \rangle_{L^2} \neq 0.
\end{equation*}
Then the travelling wave $(v_{\star},\mu_{\star})$ is asymptotically stable for
\eqref{b3-equ:s4.1.3}. More precisely, there exist constants $\delta,C,\alpha >0$ such that \eqref{b3-equ:s4.1.3} has a unique solution $(v,\mu) $
 if $\|u_0-v_{\star} \|_{H^1} \le \delta$ and $\langle \hat{v}_{\xi},u_0 -\hat{v} \rangle_{L^2} =0$. Existence and uniqueness holds for solutions with regularity
 $\mu \in C[0,\infty)$,
$v\in C([0,\infty),H^1(\RR,\RR^m))$,
$v_t,f(v,v_{\xi}) \in C([0,\infty), L^2(\RR,\RR^m))$,
and $v(t) \in H^2(\RR,\RR^m)$ for $t >0$.
Furthermore, the following estimate is valid
 \begin{equation*}
 \|v(t) - v_{\star} \|_{H^1} + |\mu(t) - \mu_{\star}| \le
 C e^{-\alpha t} \|u_0 - v_{\star}\|_{H^1}, \quad t \ge 0.
 \end{equation*}
 \end{theorem}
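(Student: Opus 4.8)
The plan is to eliminate the algebraic constraint first, reducing the PDAE \eqref{b3-equ:s4.1.3} to a closed (nonlocal) parabolic evolution equation for $v$ alone, and then to apply the principle of linearised stability. To carry out the reduction I would differentiate the phase condition $\langle \hat{v}_{\xi}, v-\hat{v}\rangle_{L^2}=0$ in time and substitute the first equation of \eqref{b3-equ:s4.1.3}, obtaining $\langle \hat{v}_{\xi}, Av_{\xi\xi}+\mu v_{\xi}+f(v,v_{\xi})\rangle_{L^2}=0$. Since $\langle \hat{v}_{\xi}, v_{\star,\xi}\rangle_{L^2}\neq 0$ by hypothesis, the factor $\langle \hat{v}_{\xi}, v_{\xi}\rangle_{L^2}$ stays nonzero in an $H^1$-neighbourhood of $v_{\star}$, so one can solve explicitly
\[
\mu=\mu(v)=-\frac{\langle \hat{v}_{\xi}, Av_{\xi\xi}+f(v,v_{\xi})\rangle_{L^2}}{\langle \hat{v}_{\xi}, v_{\xi}\rangle_{L^2}}.
\]
Here the assumption $\hat{v}\in v_{\star}+H^2(\RR,\RR^m)$ is used to integrate by parts, rewriting $\langle \hat{v}_{\xi}, Av_{\xi\xi}\rangle_{L^2}=-\langle A^{\top}\hat{v}_{\xi\xi}, v_{\xi}\rangle_{L^2}$, so that $\mu(v)$ is a Lipschitz functional on $H^1$. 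Substituting back yields the reduced equation $v_t=\mathcal{F}(v):=Av_{\xi\xi}+\mu(v)v_{\xi}+f(v,v_{\xi})$, for which $v_{\star}$ is a steady state with $\mu(v_{\star})=\mu_{\star}$, and the flow preserves the constraint $\langle \hat{v}_{\xi}, v-\hat{v}\rangle_{L^2}=0$.

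Next I would linearise $\mathcal{F}$ at $v_{\star}$. Writing the Fréchet derivative of $\mu$ as the bounded functional $w\mapsto D\mu(v_{\star})[w]$, the product rule gives
\[
\hat{\mathcal{L}}w=\mathcal{L}w+\big(D\mu(v_{\star})[w]\big)v_{\star,\xi},
\]
with $\mathcal{L}$ the operator of \eqref{b3-e4.12}. The perturbation has rank one, sending $w$ to a scalar multiple of $v_{\star,\xi}$, hence is relatively compact and leaves the essential spectrum unchanged; in particular \eqref{b3-e4.15} persists. The decisive point is that the neutral eigenvalue $0$ of $\mathcal{L}$, with eigenfunction $v_{\star,\xi}$, disappears on the constraint subspace $Y:=\{w:\langle \hat{v}_{\xi},w\rangle_{L^2}=0\}$: since $v_{\star,\xi}$ is transverse to $Y$ (again by $\langle \hat{v}_{\xi}, v_{\star,\xi}\rangle_{L^2}\neq 0$), a rank-one perturbation argument shows that $\hat{\mathcal{L}}|_Y$ retains only the remaining point spectrum from \eqref{b3-e4.16}, so that $\mathrm{Re}\,\sigma(\hat{\mathcal{L}}|_Y)\le-\beta<0$.

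Finally I would exploit the parabolic structure $\mathrm{Re}(\sigma(A))>0$: the operator $\hat{\mathcal{L}}$ is sectorial on $L^2(\RR,\RR^m)$ and generates an analytic semigroup on $Y$ decaying like $e^{-\beta t}$. Local well-posedness of the reduced semilinear equation in the class $C([0,\infty),H^1)\cap C^1([0,\infty),L^2)$ with $H^2$-regularity for $t>0$ follows from standard analytic-semigroup theory, controlling the nonlinearity through the splitting \eqref{b3-e4.18} and the embedding $H^1(\RR,\RR^m)\subset L^{\infty}(\RR,\RR^m)$. The principle of linearised stability then produces constants $\delta,C,\alpha>0$ and the estimate $\|v(t)-v_{\star}\|_{H^1}\le Ce^{-\alpha t}\|u_0-v_{\star}\|_{H^1}$ for compatible data $\langle \hat{v}_{\xi}, u_0-\hat{v}\rangle_{L^2}=0$; the bound on $|\mu(t)-\mu_{\star}|=|\mu(v(t))-\mu(v_{\star})|$ follows from the Lipschitz continuity of $\mu$, and $\gamma$ is recovered by integrating $\gamma_t=\mu$.

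The main obstacle is the spectral analysis of the rank-one perturbed operator on $Y$: one must show precisely that the phase condition annihilates the neutral translation mode of $\mathcal{L}$ while preventing any other eigenvalue from crossing the line $\mathrm{Re}=-\beta$. This rests entirely on the transversality $\langle \hat{v}_{\xi}, v_{\star,\xi}\rangle_{L^2}\neq 0$ and requires a careful perturbation argument relating $\sigma(\hat{\mathcal{L}}|_Y)$ to $\sigma(\mathcal{L})$; by comparison, the local well-posedness and the semigroup decay estimate are routine once this spectral picture is secured.
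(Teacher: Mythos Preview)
The paper does not prove this theorem. It is quoted verbatim from Th\"ummler's thesis \cite[Theorem~1.13]{b3-T05} (as stated in the sentence introducing the theorem), and no argument is given or sketched here. So there is no ``paper's own proof'' to compare your proposal against.

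That said, your outline is the standard route and is essentially the one taken in \cite{b3-T05}: reduce the index-$2$ PDAE to an index-$1$ problem by differentiating the phase condition, solve for $\mu=\mu(v)$ using the nondegeneracy $\langle \hat v_\xi,v_{\star,\xi}\rangle_{L^2}\neq 0$, and then apply the principle of linearised stability for the resulting semilinear parabolic equation on the constraint hyperplane. Your observation that the integration by parts $\langle \hat v_\xi, A v_{\xi\xi}\rangle_{L^2}=-\langle A^\top \hat v_{\xi\xi},v_\xi\rangle_{L^2}$ (using $\hat v\in v_\star+H^2$) is what makes $\mu(\cdot)$ Lipschitz on $H^1$ is exactly the reason that regularity hypothesis appears. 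One small addition worth making explicit: by construction $\langle \hat v_\xi,\mathcal F(v)\rangle_{L^2}=0$ for \emph{all} $v$ near $v_\star$, so differentiating shows that $\hat{\mathcal L}$ maps its whole domain into $Y$, not merely $Y$ into $Y$; this is what guarantees that the restricted semigroup on $Y$ is well defined and that the rank-one correction genuinely removes the translation eigenvalue without creating a new one. With that in hand, the spectral step you flag as the main obstacle reduces to a bordering-lemma computation: for $\mathrm{Re}\,\lambda>-\beta$, $\lambda\neq 0$, invertibility of $\lambda I-\hat{\mathcal L}$ on $Y$ is equivalent to invertibility of $\lambda I-\mathcal L$ on $L^2$ together with the $2\times 2$ nondegeneracy encoded by $\langle \hat v_\xi,v_{\star,\xi}\rangle_{L^2}\neq 0$. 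The rest (sectoriality, local well-posedness via \eqref{b3-e4.18} and Sobolev embedding, exponential decay, and the Lipschitz bound on $\mu$) is routine, as you say.
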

 The papers \cite{b3-T08},\cite{b3-T08a} transfer these properties to a spatially discretised sytem (time is left continuous) on bounded intervals $J=[x_-,x_+]$
 with general linear boundary conditions  
 \begin{equation} \label{b3-e4.24} P_-(v(x_-)-v_-) + Q_- v_{\xi}(x_-) +
 P_+(v(x_+)-v_+) + Q_+ v_{\xi}(x_+)= 0 ,
 \end{equation}
 where $P_{\pm},Q_{\pm} \RR^{2m,m}$ and $v_{\pm}$ are given by \eqref{b3-e4.13}.
 An essential condition for stability is \cite[Hypothesis 2.5]{b3-T08a}
 \begin{equation} \label{b3-e4.25}
 \det \begin{pmatrix}
 \begin{pmatrix} P_- & Q_- \end{pmatrix}
 \begin{pmatrix} Y_-^s(\lambda) \\ Y_-^s(\lambda) \Lambda_-^s(\lambda) \end{pmatrix}
 &
 \begin{pmatrix} P_+ & Q_+ \end{pmatrix}
 \begin{pmatrix} Y_+^u(\lambda) \\ Y_+^u(\lambda) \Lambda_+^u(\lambda) \end{pmatrix}
 \end{pmatrix} \neq 0
 \end{equation}
 for all $\lambda \in \CC$ satisfying $\mathrm{Re}\lambda \ge - \beta$ and $|\lambda| \le C$
 for some large constant $C$. Here the matrices $Y_{\pm}^{s,u}(\lambda) \in \RR^{m,m}$
 are invertible and together with $\Lambda_{\pm}^{s,u}(\lambda)\in \RR^{m,m}$
 solve the quadratic eigenvalue problem (cf. \eqref{b3-e4.13a})
 \begin{equation} \label{b3-e4.26}
 AY \Lambda^2 + B_{\pm}Y \Lambda + (C_{\pm}-\lambda I_m)Y = 0
 \end{equation}
 such that $\mathrm{Re}\, \sigma(\Lambda_{\pm}^s(\lambda)) <0 <
 \mathrm{Re}\, \sigma(\Lambda_{\pm}^u(\lambda))$. Condition \eqref{b3-e4.15}  on
 the dispersion set \eqref{b3-e4.14} ensures that \eqref{b3-e4.26}
 has $m$ stable and $m$ unstable eigenvalues.
 A counterexample in \cite[Ch.5.2]{b3-T08a} shows that violation of
 \eqref{b3-e4.25} creates instabilities of the numerical solution even
 if all conditions of Theorem \ref{b3-t5} are satisfied.

We proceed with two stability results recently obtained for the freezing
formulation of the semilinear wave equation \eqref{b3-e4.3.2} and of
the NLS \eqref{b3-sd-eq01}. The assumptions on \eqref{b3-e4.3.1} are as follows
\begin{equation*} \label{b3-e4.27}
\tilde{f} \in C^3(\RR^{3m},\RR^m),
\end{equation*}
\begin{equation*} \label{b3-e4.28}
M\;  \text{is invertible}, \quad M^{-1}A \;\text{ is diagonalisable with positive eigenvalues,}
\end{equation*}
\begin{equation*} \label{b3-e4.29}
\begin{aligned}
(v_{\star},\mu_{\star}) \in & C^2_b(\RR,\RR^m)\times \RR \; \text{ is a travelling
wave of} \; \eqref{b3-e4.3.1} \; \text{with} \\
v_{\star,\xi} \in & H^3(\RR,\RR^m), \quad
\lim_{\xi \to \pm \infty}(v_{\star}, v_{\star,\xi})(\xi)= (v_{\pm},0), \;
\tilde{f}(v_{\pm},0,0) = 0,
\end{aligned}
\end{equation*}
\begin{equation*} \label{b3-e4.30}
A - \mu_{\star}^2 M \; \text{is invertible}.
\end{equation*}
The spectral assumptions concern the quadratic operator polynomial obtained
from linearising the comoving frame equation in the first line of
\eqref{b3-e4.3.2}
\begin{equation} \label{b3-e4.31}
\begin{aligned}
\mathcal{P}(\lambda,\partial_{\xi})=& M \lambda^2 - (D_3\tilde{f}(\star)+ 2 \mu_{\star} M \partial_{\xi}) \lambda - (A- \mu_{\star}^2M)\partial_{\xi}^2 - D_1\tilde{f}(\star) \\
+ & (\mu_{\star}D_3\tilde{f}(\star)-D_2\tilde{f}(\star))\partial_{\xi} , \quad
(\star) = (v_{\star},v_{\star,\xi},-\mu_{\star} v_{\star,\xi}).
\end{aligned}
\end{equation}
From this we obtain the matrix polynomials $\mathcal{P}_{\pm}(\lambda,\omega)$
by replacing the argument $(\star)$
by its limit $(v_{\pm},0,0)$ as $\xi \to \pm \infty$ and the operator
$\partial_{\xi}$
by its Fourier symbol $i \omega$. Then the dispersion set  is defined as follows
\begin{equation*} \label{b3-e4.32}
\sigma_{\mathrm{disp}}(\mathcal{P})= \{ \lambda \in \CC: \det(\mathcal{P}_{\pm}(\lambda,\omega))=0: \text{for some sign} \;\pm \; \text{and}\; \omega\in \RR
\}.
\end{equation*}
The conditions analogous to \eqref{b3-e4.15}, \eqref{b3-e4.16} are then
\begin{equation*} \label{b3-e4.33}
\mathrm{Re}\, \sigma_{\mathrm{disp}}(\mathcal{P}) \le - \beta < 0,
\end{equation*}
\begin{equation*} \label{b3-e4.34}
\mathrm{Re}\left(\sigma_{\mathrm{pt}}(\mathcal{P}(\cdot,\partial_{\xi}))\setminus \{0\}\right) \le
- \beta<0, \quad
\text{and the eigenvalue} \;  0  \; \text{is simple}.
\end{equation*}
\begin{theorem} \label{b3-t6}
Let the assumptions above be satisfied and let the template function
$\hat{v}$ in \eqref{b3-e4.3.2} fulfil
\begin{equation*} \label{b3-e4.35}
\hat{v} \in v_{\star}+H^1(\RR,\RR^m), \quad \langle\hat{v} - v_{\star},
\hat{v}_{\xi} \rangle_{L^2} = 0 , \quad \langle v_{\star,\xi},\hat{v}_{\xi}
\rangle_{L^2} \neq 0.
\end{equation*}
Then the pair $(v_{\star},\mu_{\star})$ is asymptotically stable for
the PDAE \eqref{b3-e4.3.2}. More precisely,  for all $0 < \eta < \beta$
there exist $\rho,C>0$ such that for all $u_0 \in v_{\star}+H^3(\RR,\RR^m)$,
$v_0 \in H^2(\RR,\RR^m)$, $\mu_1^0 \in \RR$ which satisfy
\begin{equation*} \label{b3-e4.36}
\|u_0 - v_{\star} \|_{H^3} + \| v_0 + \mu_{\star} v_{\star,\xi} \|_{H^2} \le \rho
\end{equation*}
as well as the consistency condition \eqref{b3-e4.3.3a}, the system
\eqref{b3-e4.3.2} has a unique solution $(v,\mu_1,\mu_2)$ with
$\mu_1 \in C^1[0,\infty)$, $\mu_2 \in C[0,\infty)$ and regularity
\begin{align*}
v- v_{\star} \in C^2([0,\infty),L^2(\RR,\RR^m))\cap
C^1([0,\infty),H^1(\RR,\RR^m))\cap C([0,\infty),H^2(\RR,\RR^m)).
\end{align*}
The following estimate holds for the solution
\begin{equation} \label{b3-e4.37}
\|v(\cdot,t) - v_{\star}\|_{H^2}+ \|v_t(\cdot,t)\|_{H^1} + |\mu_1(t) - \mu_{\star}|
\le C e^{-\eta t}( \|u_0 -v_{\star}\|_{H^3} + \|v_0 +
\mu_{\star} v_{\star,\xi}\|_{H^2}).
\end{equation}
\end{theorem}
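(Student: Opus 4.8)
The plan is to reduce the second order problem to a first order system in both time and space and then to borrow the hyperbolic stability machinery of \cite{b3-R12a}. First I would rewrite the comoving version of the freezing system \eqref{b3-e4.3.2} as a first order evolution equation in time by adjoining the velocity $w=v_t$, so that the steady state becomes $(v_\star,0,\mu_\star)$ and the reconstruction equation $\gamma_t=\mu_1$ decouples and is postponed to a post-processing step as in Section \ref{b3-s2.1}. The decisive structural fact, already recorded around \eqref{b3-e4.3.4}, is that with $N=(M^{-1}A)^{1/2}$ and the variables $U_1=v$, $U_2=v_t+Nv_\xi$, $U_3=v_t-Nv_\xi+cv$ the frozen equation is equivalent to a first order system of the type \eqref{b3-e4.2.1} in dimension $3m$, with $E$ and $f$ as in \eqref{b3-e4.3.4}. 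Since the change of variables is an invertible linear map on the field values — here the positive diagonalisability of $M^{-1}A$ guarantees that $N$ is a well-defined real matrix — the nonlinear dynamics of the two formulations are conjugate, so the abstract results of \cite{b3-R12a} for hyperbolic freezing systems apply after matching the phase conditions.

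The second step is the spectral dictionary. The matrix polynomials $\mathcal{P}_\pm(\lambda,\omega)$ built from \eqref{b3-e4.31} encode the dispersion relation $\det\mathcal{P}_\pm(\lambda,\omega)=0$ of the limiting constant coefficient operators, and under the above change of variables this determinant differs from the characteristic polynomial of the $3m$-dimensional first order system only by the non-vanishing Jacobian factor of the substitution. Hence $\sigma_{\mathrm{disp}}(\mathcal{P})$ coincides with the dispersion set of the first order formulation, and the hypothesis $\Real\,\sigma_{\mathrm{disp}}(\mathcal{P})\le-\beta$ confines the essential spectrum of the linearised generator to $\{\Real\le-\beta\}$ by Weyl's theorem, exactly as in Section \ref{b3-s3.3}. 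The point spectrum hypothesis then leaves only the simple eigenvalue $0$, with translation eigenfunction proportional to $v_{\star,\xi}$, inside the strip $\Real>-\beta$.

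Third, I would deal with the differential algebraic structure. The phase condition $\langle\hat{v}_\xi,v-\hat{v}\rangle_{L^2}=0$, together with its first and second time derivatives \eqref{b3-e4.3.3a} and \eqref{b3-e4.3.3b}, fixes the consistent initial values $\mu_1^0$ and $\mu_2^0$ and reduces the index $2$ problem to index $1$ by the mechanism described after \eqref{b3-e25}. The transversality assumption $\langle v_{\star,\xi},\hat{v}_\xi\rangle_{L^2}\neq0$ makes this constraint complementary to the neutral translation mode, so near $v_\star$ the implicit function theorem solves the constraint for the group parameter and the perturbation splits into a scalar part measured by $\mu_1-\mu_\star$ and a transverse part on which the reduced flow sees no spectrum in $\{\Real>-\beta\}$.

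The hard part will be the fourth step: upgrading this spectral gap to a genuine exponential decay estimate for the linear semigroup. Because \eqref{b3-e4.3.1} is hyperbolic rather than parabolic, the semigroup is not analytic and the spectral mapping theorem may fail, so decay cannot simply be read off from the spectrum. Instead I would prove uniform resolvent bounds on each line $\Real\,\lambda=-\eta$ with $0<\eta<\beta$ and invoke a Gearhart--Pr\"uss type theorem in the Hilbert space setting, which is precisely the device underlying the hyperbolic results of \cite{b3-R12a}. A secondary difficulty is that $\mu_1(t)$ enters the principal part through the coefficient $A-\mu_1^2M$, so the frozen system is only quasilinear; one controls this by keeping $A-\mu_1^2M$ a small perturbation of the invertible $A-\mu_\star^2M$ along the flow. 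Once the linear bound $Ce^{-\eta t}$ is available, the nonlinear estimate \eqref{b3-e4.37} follows from a variation of constants / Lyapunov--Perron fixed point argument in the stated regularity class, using $\tilde f\in C^3$ and the embedding $H^1(\RR,\RR^m)\subset L^\infty(\RR,\RR^m)$ to bound the superquadratic remainder; the characteristic loss of one derivative between the $H^3\times H^2$ data and the $H^2\times H^1$ solution norm is handled by the smoothing-free energy hierarchy of the first order system.
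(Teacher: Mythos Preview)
Your proposal is correct and follows essentially the same route as the paper: reduce the second order PDAE to the first order $3m$-system \eqref{b3-e4.3.4} and invoke the hyperbolic stability theorem of \cite{b3-R12a}, using the consistency condition \eqref{b3-e4.3.3b} to make $\mu_2$ continuous at $t=0$ and accepting the loss of one derivative as an artefact of the first order theory. The only point the paper stresses that you leave implicit is that the constant $c$ in \eqref{b3-e4.3.4} must be chosen suitably for the spectral hypotheses of \cite{b3-R12a} to carry over.
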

Note that the second consistency condition \eqref{b3-e4.3.3b} does not
appear in the theorem but is used in the proof to make the acceleration
$\mu_2$ continuous at $t=0$. The proof of the theorem builds on
a careful reduction to the first order system \eqref{b3-e4.3.4} and on an
application of the stability theorem from \cite{b3-R12a}. The theory
for first order systems is also the reason for measuring the convergence
\eqref{b3-e4.37} in a weaker norm than the initial values.

Finally, we state a recent result on the Lyapunov-stability of the freezing method
for the non-linear Schr\"odinger equation \eqref{b3-sd-eq03},\eqref{b3-sd-eq04}.
It is a very special case of a general stability result from the thesis
\cite[Ch.2]{b3-D17} which applies to Hamiltonian PDEs that are equivariant
w.r.t. the action of a Lie group. The  assumptions are taken from the
abstract framework of \cite{b3-GSS90} which is a seminal paper on
the stability of solitary waves.

The following theorem is concerned with
the waves \eqref{b3-sd-eq05} for fixed values
$\mu_{\star,1},\mu_{\star,2}$ satisfying $ 4 \mu_{\star,1}> \mu_{\star,2}^2$. 
\begin{theorem} \label{b3-t7}
Let $\hat{v} \in H^3(\RR,\CC)$ be a template function such that
\begin{equation*} \label{b3-e4.38}
\langle i \hat{v}, v_{\star} \rangle_0 =0, \quad \langle \hat{v}_x, v_{\star}
\rangle_0 = 0,
\end{equation*}
\begin{equation*} \label{b3-e4.39}
\begin{pmatrix} \langle i\hat{v}, i  v_{\star} \rangle_0 &
                \langle i \hat{v}, v_{\star,x} \rangle_0 \\
                \langle \hat{v}_x, i v_{\star} \rangle_0 &
                \langle \hat{v}_x, v_{\star,x} \rangle_0
 \end{pmatrix}
 \; \text{is invertible.} 
 \end{equation*}
 Then the solitary wave $(v_{\star},\mu_{\star,1},\mu_{\star,2})$ from
 \eqref{b3-sd-eq05} is Lyapunov-stable for the system
 \eqref{b3-sd-eq03}, \eqref{b3-sd-eq04}. More precisely, for every
 $\varepsilon >0$ there exists $\delta >0$ such that the system
 \eqref{b3-sd-eq03}, \eqref{b3-sd-eq04} with $\|u_0-v_{\star}\|_{H^1} \le \delta$
 has a unique (weak) solution
 $(v,\mu_1,\mu_2)$ with $\mu_1 \in C^1[0,\infty)$, $\mu_2 \in C[0,\infty)$ and regularity
 \begin{align*}
 v \in C([0,\infty),H^1(\RR,\CC)) \cap C^1([0,\infty), H^{-1}(\RR,\CC)),
 \quad t \ge 0.
 \end{align*}
 The solution satisfies
 \begin{equation*} \label{b3-e4.40}
 \| v(\cdot,t) - v_{\star} \|_{H^1} + |\mu_1(t)-\mu_{\star,1}|
 +|\mu_2(t) - \mu_{\star,2}| \le \varepsilon, \quad t\ge 0.
 \end{equation*}
 \end{theorem}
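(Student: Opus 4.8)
The plan is to reduce the statement to the classical orbital stability of the NLS soliton and then use the phase conditions \eqref{b3-sd-eq04} to select a single representative on each group orbit, thereby upgrading orbital stability to Lyapunov stability of the frozen profile; this is the concrete instance of the abstract scheme of \cite[Ch.2]{b3-D17}. First I would establish orbital stability of $v_\star$ for \eqref{b3-sd-eq01} within the Grillakis--Shatah--Strauss framework \cite{b3-GSS87,b3-GSS90}. The wave \eqref{b3-sd-eq05} is a critical point of $E-\mu_{\star,1}Q_1-\mu_{\star,2}Q_2$, where $E$ is the Hamiltonian and $Q_1,Q_2$ are the conserved charge and momentum generated by the $S^1$- and $\RR$-actions. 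The assumption $4\mu_{\star,1}>\mu_{\star,2}^2$ forces $\omega^2>0$ in \eqref{b3-sd-eq05}, so the wave exists, and for the cubic NLS the required count of negative eigenvalues of the Hessian together with the Vakhitov--Kolokolov convexity condition are classical. This yields: for every $\varepsilon>0$ there is $\delta>0$ such that $\|u_0-v_\star\|_{H^1}\le\delta$ implies $\inf_{g\in G}\|u(t)-a(g)v_\star\|_{H^1}\le\varepsilon$ for all $t\ge0$.

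Next I would build a local slice transverse to the orbit from the template $\hat v$. Let $\Psi(g,u)\in\RR^2$ collect the two phase conditions \eqref{b3-sd-eq04} evaluated at $a(g)^{-1}u$. The orthogonality hypotheses $\langle i\hat v,v_\star\rangle_0=\langle\hat v_x,v_\star\rangle_0=0$ give $\Psi(\one,v_\star)=0$, and differentiating with the generators $d_\gamma[a(\one)v]\mu=-i\mu_1 v-\mu_2 v_x$ shows that $D_g\Psi(\one,v_\star)$ equals the $2\times2$ matrix whose invertibility is assumed. The implicit function theorem then furnishes, for $u$ in an $H^1$-neighbourhood of $v_\star$, a unique $g=g(u)$ near $\one$ with $v:=a(g(u))^{-1}u$ satisfying \eqref{b3-sd-eq04} and $g(v_\star)=\one$. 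The same matrix reappears when one differentiates \eqref{b3-sd-eq04} in time and inserts \eqref{b3-sd-eq03}: its invertibility solves uniquely for $\mu=(\mu_1,\mu_2)$, so the PDAE has differentiation index $1$.

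I would then translate orbital stability into stability of the frozen profile and verify well-posedness. Since $u(t)$ stays within $\varepsilon$ of the two-dimensional orbit $\{a(g)v_\star\}$, transversality of the slice confines $g(u(t))$ to a neighbourhood of $\one$ and hence $v(t)=a(g(u(t)))^{-1}u(t)$ to a neighbourhood of $v_\star$ in $H^1$. Setting $\gamma(t)=g(u(t))$ and reconstructing $\mu$ from the index-$1$ relation, one checks that $(v,\mu_1,\mu_2,\gamma)$ solves \eqref{b3-sd-eq03},\eqref{b3-sd-eq04}; uniqueness and the stated regularity $v\in C([0,\infty),H^1)\cap C^1([0,\infty),H^{-1})$ follow from the $H^1$-well-posedness of the cubic NLS and the smoothness of the reconstruction map.

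The main obstacle is the global-in-time continuation of this slice decomposition: the implicit function theorem only furnishes $g(u)$ locally, so one must run a continuation argument guaranteeing that $g(u(t))$ never leaves the region where the slice is non-degenerate. This closes in bootstrap fashion---orbital stability keeps $u(t)$ uniformly near the orbit, the invertibility condition keeps the slice non-degenerate along the whole orbit, and together they trap $v(t)$ in a fixed $H^1$-ball about $v_\star$. Executing this in the weak, $H^{-1}$-in-time solution class, where the reconstruction of $\mu$ must be read distributionally, is the delicate point, and it is precisely what is abstracted and proved in full generality in \cite[Ch.2]{b3-D17}.
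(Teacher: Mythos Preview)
Your proposal is correct and follows essentially the same route the paper indicates: the paper does not give a self-contained proof but refers to \cite[Ch.2]{b3-D17} and the Grillakis--Shatah--Strauss framework \cite{b3-GSS90}, describing the argument as ``mainly based on Lyapunov function techniques,'' and your sketch (GSS orbital stability of the soliton, a slice/implicit-function argument driven by the invertible $2\times 2$ matrix to fix the phase, and a bootstrap continuation) is precisely the concrete instantiation of that scheme. One minor phrasing issue: it is not $g(u(t))$ that stays near $\one$---the group element $\gamma(t)$ drifts unboundedly---but rather $v(t)=a(\gamma(t))^{-1}u(t)$ that stays near $v_\star$; since the group $S^1\times\RR$ is abelian and acts isometrically on $H^1$, the slice has uniform width along the whole orbit, which is what actually closes the continuation argument.
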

 For the notion of weak solution employed here we refer to \cite[Ch.1.2]{b3-D17}. The proof of Theorem \ref{b3-t7} is mainly based on Lyapunov function
 techniques which are
 quite different from the semigroup and Laplace transform approaches
 used in the proofs of Theorems \ref{b3-t4}-\ref{b3-t6}.
 We also emphasise that \cite{b3-D17} contains applications to other PDEs with
 Hamiltonian structure, for example the non-linear Klein Gordon and
 the Korteweg-de Vries equation, and that spatial discretisations are also
 studied.

 \sect{Non-Linear Eigenvalue Problems}
\label{b3-s5}
In the context of this work non-linear eigenvalue problems arise when computing isolated eigenvalues of differential operators obtained by linearising about
a relative equilibrium. We refer to \eqref{b3-e4.5} for the abstract linearisation and to \eqref{b3-e4.12},\eqref{b3-e4.21},\eqref{b3-e4.31} for some examples
of operators. There are several sources of non-linearity in the eigenparameter,
see \cite{b3-GT17} for a recent survey.
Quadratic terms arise from second order equations in time \eqref{b3-e4.31},
exponential terms occur in the stability analysis of delay equations
(see \cite{b3-MN07}), and non-linear integral operators appear in the boundary element
method for linear elliptic eigenvalue problems. Of interest here is another source of non-linearity:
the use of projection boundary conditions when solving linear eigenvalue
problems for operators such as \eqref{b3-e4.12} on a bounded interval
$J=[x_-,x_+]$. In the following we summarise two of the major results
from \cite{b3-BLR14} on this problem.
\index{projection boundary conditions}

Contour methods have been developed over the last years
(\cite{b3-ASTIK09},\cite{b3-B12},\cite{b3-GT17}) and have become rather popular since no a-priori
knowledge about the location of eigenvalues is assumed.
The paper \cite{b3-BLR14} generalises the contour method
\index{contour method}
from \cite{b3-B12}  to holomorphic eigenvalue problems
\begin{equation} \label{b3-e5.1}
\mathcal{L}(\lambda)v = 0, \quad v \in X, \quad
\lambda \in \Omega \subseteq \CC,
\end{equation}
where $\mathcal{L}(\lambda): X \to Y$ are Fredholm operators of index $0$
between Banach spaces $X,Y$  which depend holomorphically on $\lambda$
in some subdomain $\Omega$ of $\CC$. The algorithm determines all eigenvalues
of \eqref{b3-e5.1} in the interior $\Omega_0 = \mathrm{int}(\Gamma)$ of some
given closed contour $\Gamma$ in $\Omega$. It is assumed that $\Gamma$ itself lies
in the resolvent set $\rho(\mathcal{L})= \{\lambda \in \CC:
N(\mathcal{L}(\lambda))= \{0\} \}$. One chooses linearly independent
elements $v_k \in Y, k=1,\ldots,\ell$ and functionals $w_j \in X^{\star}, j=1,\ldots,p$ and computes the following matrices
\begin{equation} \label{b3-e5.2}
E(\lambda) = \left( \langle w_j, \mathcal{L}(\lambda)^{-1} v_k \rangle_{j=1,\ldots,p}^{k=1,\ldots,\ell} \right) \in \CC^{p,\ell}, \quad \lambda \in \Gamma,
\end{equation}
\begin{equation} \label{b3-e5.3}
E_0 = \frac{1}{2 \pi i} \int_{\Gamma} E(\lambda) d \lambda, \quad
E_1 = \frac{1}{2 \pi i} \int_{\Gamma} \lambda E(\lambda) d\lambda.
\end{equation}
The following result from \cite[Theorem 2.4]{b3-BLR14} holds for the case of simple eigenvalues defined by the conditions
\begin{equation*} \label{b3-e5.4}
\begin{aligned}
\sigma(\mathcal{L})\cap \mathrm{int}(\Gamma) = & \{\lambda_1, \ldots,\lambda_{\varkappa} \},\\
N(\mathcal{L}(\lambda_j))=& \mathrm{span}\{x_j\}, \quad
N(\mathcal{L}(\lambda_j)^{\star})= \mathrm{span} \{ y_j\}, \quad j=1,\ldots, \varkappa, \\
\langle y_j, \mathcal{L}'(\lambda_j) x_j \rangle \neq & 0, \quad j=1, \ldots,
\varkappa.
\end{aligned}
\end{equation*}
\begin{theorem} \label{b3-t8}
Let the above assumptions hold and assume the following nondegeneracy
condition
\begin{equation} \label{b3-e5.5}
\mathrm{rank} \left( \langle w_j, x_k \rangle_{j=1,\ldots,p}^{k=1,\ldots,\varkappa}
\right) = \varkappa = \mathrm{rank} \left(
\langle y_j , v_k \rangle_{j=1,\ldots,\varkappa}^{k=1,\ldots,\ell} \right).
\end{equation}
Then $\mathrm{rank}(E_0) = \varkappa$ holds. Further let
\begin{equation} \label{b3-e5.6}
E_0 = V_0 \Sigma_0 W_0^{\star},\; V_0 \in \CC^{p,\varkappa},\;
V_0^{\star} V_0 = I_{\varkappa}, \; W_0 \in \CC^{\ell,\varkappa},\;
W_0^{\star}W_0 = I_{\varkappa}
\end{equation}
be the (shortened) singular value decomposition of $E_0$ with
$\Sigma_0 = \mathrm{diag}(\sigma_0,\ldots,\sigma_{\varkappa} )$,
$\sigma_1 \ge \sigma_2 \ge \ldots \ge \sigma_{\varkappa}>0$.
Then all eigenvalues of the matrix
\begin{equation} \label{b3-e5.7}
E_{\mathcal{L}} = V_0^{\star}E_1 W_0 \Sigma_0^{-1} \in \CC^{\varkappa,\varkappa}
\end{equation}
are simple and coincide with $\lambda_1, \ldots, \lambda_{\varkappa}$.
\end{theorem}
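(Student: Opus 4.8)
The plan is to reduce the statement to a finite-dimensional linear-algebra identity by extracting the residues of the resolvent $\mathcal{L}(\lambda)^{-1}$ at the eigenvalues $\lambda_1,\dots,\lambda_\varkappa$ and then evaluating the contour integrals \eqref{b3-e5.3} by the residue theorem. First I would establish a Keldysh-type expansion: near a simple eigenvalue $\lambda_j$ the operator-valued function $\mathcal{L}(\lambda)^{-1}$ has a simple pole whose residue is the rank-one operator
\begin{equation*}
R_j = \frac{x_j\,\langle y_j,\,\cdot\,\rangle}{\langle y_j,\mathcal{L}'(\lambda_j)x_j\rangle}\colon Y \to X .
\end{equation*}
To derive this I would insert the ansatz $\mathcal{L}(\lambda)^{-1}=(\lambda-\lambda_j)^{-1}R_j+H(\lambda)$, with $H$ holomorphic near $\lambda_j$, together with the Taylor expansion $\mathcal{L}(\lambda)=\mathcal{L}(\lambda_j)+(\lambda-\lambda_j)\mathcal{L}'(\lambda_j)+\cdots$ into $\mathcal{L}(\lambda)\mathcal{L}(\lambda)^{-1}=I_Y$ and compare powers of $(\lambda-\lambda_j)$. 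The $(\lambda-\lambda_j)^{-1}$-term forces $\mathrm{range}(R_j)\subseteq N(\mathcal{L}(\lambda_j))=\mathrm{span}\{x_j\}$, so $R_j=x_j\,\psi$ for some $\psi\in Y^\star$; pairing the $(\lambda-\lambda_j)^0$-term with $y_j\in N(\mathcal{L}(\lambda_j)^\star)$ annihilates the $\mathcal{L}(\lambda_j)H(\lambda_j)$ contribution and pins down $\psi=\langle y_j,\,\cdot\,\rangle/\langle y_j,\mathcal{L}'(\lambda_j)x_j\rangle$, the denominator being nonzero by the assumed algebraic simplicity. This residue computation, where the Fredholm index-$0$ structure and the nondegeneracy $\langle y_j,\mathcal{L}'(\lambda_j)x_j\rangle\neq0$ are genuinely used, is the step I expect to be the main obstacle; everything afterwards is elementary.

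With the residues in hand, the residue theorem turns \eqref{b3-e5.3} into explicit factorisations. Introducing the data matrices $W=(\langle w_j,x_k\rangle)\in\CC^{p,\varkappa}$ and $V=(\langle y_j,v_k\rangle)\in\CC^{\varkappa,\ell}$ from \eqref{b3-e5.5}, together with the diagonal matrices $D=\mathrm{diag}(\langle y_j,\mathcal{L}'(\lambda_j)x_j\rangle^{-1})$ and $\Lambda=\mathrm{diag}(\lambda_1,\dots,\lambda_\varkappa)$, I would obtain $E_0=WDV$ and $E_1=WD\Lambda V$. Since $W$ has full column rank $\varkappa$, $V$ has full row rank $\varkappa$ and $D$ is invertible, it follows immediately that $\mathrm{range}(E_0)=\mathrm{range}(W)$, $\mathrm{range}(E_0^\star)=\mathrm{range}(V^\star)$ and $\mathrm{rank}(E_0)=\varkappa$, which is the first assertion.

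For the second assertion I would exploit that the shortened SVD $E_0=V_0\Sigma_0W_0^\star$ and the factorisation $E_0=WDV$ describe the same column and row spaces. Hence there exist invertible matrices $P,Q\in\CC^{\varkappa,\varkappa}$ with $W=V_0P$ and $V=Q^\star W_0^\star$. Substituting and using $V_0^\star V_0=I_\varkappa$, $W_0^\star W_0=I_\varkappa$ yields $\Sigma_0=V_0^\star E_0W_0=PDQ^\star$ and $V_0^\star E_1W_0=PD\Lambda Q^\star$, so that
\begin{equation*}
E_{\mathcal{L}}=V_0^\star E_1W_0\,\Sigma_0^{-1}=PD\Lambda Q^\star\,(PDQ^\star)^{-1}=P\,(D\Lambda D^{-1})\,P^{-1}=P\Lambda P^{-1}.
\end{equation*}
Because $D$ and $\Lambda$ are diagonal they commute, whence $E_{\mathcal{L}}$ is similar to $\Lambda=\mathrm{diag}(\lambda_1,\dots,\lambda_\varkappa)$. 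As the $\lambda_j$ are pairwise distinct points of $\mathrm{int}(\Gamma)$, the eigenvalues of $E_{\mathcal{L}}$ are exactly $\lambda_1,\dots,\lambda_\varkappa$ and all simple, completing the proof.
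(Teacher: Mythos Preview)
Your proposal is correct and follows precisely the route the paper indicates: the paper does not spell out the proof here but points to \cite[Theorem~2.4]{b3-BLR14} and states that ``the key of the proof is the theorem of Keldysh \ldots which describes the coefficients of the meromorphic expansion of $\mathcal{L}(\lambda)^{-1}$ near its singularities in terms of (generalised) eigenvectors.'' You carry out exactly this program for the simple-eigenvalue case---deriving the rank-one residues $R_j$, obtaining the factorisations $E_0=WDV$, $E_1=WD\Lambda V$ via the residue theorem, and finishing with the SVD similarity argument---which is the proof given in the cited reference.
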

First note that \eqref{b3-e5.5} implies $p,\ell \ge \varkappa$, i.e.
the number of test functions and test functionals should exceed the
number of eigenvalues inside the contour. In fact, in applications
we expect to have $p  \gg \ell \gg \varkappa$.
The key of the proof is the theorem of Keldysh (see
\cite[Theorem 1.6.5]{b3-MM03}) which describes the coefficients of the meromorphic expansion of $\mathcal{L}(\lambda)^{-1}$ near its singularities in terms
of (generalised) eigenvectors. We mention that Theorem \ref{b3-t8}
generalises to eigenvalues of arbitrary geometric and algebraic multiplicity.
With the proper definition of generalised eigenvectors of \eqref{b3-e5.1}
it turns out that the Jordan normal form of the matrix $E_{\mathcal{L}}$ in \eqref{b3-e5.7}
inherits the exact multiplicity structure of the non-linear eigenvalue problem,
see \cite[Theorem 2.8]{b3-BLR14}. For the overall algorithm one approximates
the integrals in \eqref{b3-e5.3} by a quadrature rule (for analytical
contours $\Gamma$ the trapezoidal sum is sufficient since it leads to exponential convergence \cite{b3-B12}) and solves linear systems
$\mathcal{L}(\lambda) u_k = v_k,  k=1,\ldots \ell$ at the quadrature nodes
$\lambda \in \Gamma$. Note that these solutions can be used for both integrals
in \eqref{b3-e5.3}. The (shortened) singular value decomposition
\eqref{b3-e5.6} involves a rank decision revealing the number $\varkappa$
of eigenvalues inside the contour. Finally, solving the linear (!) eigenvalue
problem for the matrix $E_{\mathcal{L}} \in \CC^{\varkappa,\varkappa}$ is usually cheap
if $\varkappa$ is small.

We note that the algorithm also provides good approximations
of the eigenfunctions associated to $\lambda_j,j=1,\ldots, \varkappa$,
see \cite{b3-B12},\cite[Section 2.2]{b3-BLR14}. There is even an extension
of the contour method to cases where the nondegeneracy condition
\eqref{b3-e5.5} is violated. Then one computes some higher order moments
\begin{equation} \label{b3-e5.7a}
E_{\nu} = \frac{1}{2 \pi i} \int_{\Gamma} \lambda^{\nu} E(\lambda) d\lambda,
\quad \nu=0,1,2,\ldots
\end{equation}
and determines the eigenvalues from a suitable block Hankel matrix (see
\cite{b3-B12} for the extended algorithm and for the number of additional
integrals needed).
Numerical examples with more details on the algorithm may be found in
\cite{b3-B12}, and applications to the 
travelling waves considered here appear in \cite[Section 6]{b3-BLR14}.

Another favourable feature of the method is that the errors occurring in
the intermediate steps \eqref{b3-e5.2},\eqref{b3-e5.3},\eqref{b3-e5.6},\eqref{b3-e5.7} are well controllable. We demonstrate this for the operator
$\mathcal{L}(\lambda) = \lambda I - \mathcal{L}$ with the differential
operator $\mathcal{L}$ taken from \eqref{b3-e4.12}. The evaluation of the
matrix $E(\lambda)$
from \eqref{b3-e5.2} requires to solve inhomogeneous equations
\begin{equation} \label{b3-e5.8}
\mathcal{L}(\lambda) u = v \in L^2(\RR,\RR^m), \quad \lambda \in \Gamma \subset \rho(\mathcal{L})
\end{equation}
on a bounded interval $J=[x_-,x_+]$ with linear (but possibly $\lambda$-dependent)  boundary conditions
(cf. \eqref{b3-e4.24})
\begin{equation*} \label{b3-e5.9}
\mathcal{B}_J(\lambda)u:= P_-(\lambda)(u(x_-)-v_-)+Q_-(\lambda)u_{\xi}(x_-) + P_+(\lambda)(u(x_+)-v_+)
+ Q_+(\lambda)u_{\xi}(\lambda) =0.
\end{equation*}
Such $\lambda$-dependent boundary matrices $P_{\pm},Q_{\pm} \in C(\Omega,\RR^{2m,m})$  occur with the so-called
projection boundary conditions (\cite{b3-B90}) and lead to
fast convergence towards the solution of \eqref{b3-e5.8}
as $x_{\pm} \to \pm \infty$.
\index{projection boundary conditions}
The matrices are determined in such a way (see \cite[Section 4]{b3-BLR14})
that
\begin{equation*} 
  \begin{pmatrix}
 \begin{pmatrix} P_-(\lambda) & Q_-(\lambda)\end{pmatrix}
 \begin{pmatrix} Y_-^s(\lambda) \\ Y_-^s(\lambda) \Lambda_-^s(\lambda) \end{pmatrix}
 &
 \begin{pmatrix} P_+(\lambda) & Q_+(\lambda) \end{pmatrix}
 \begin{pmatrix} Y_+^u(\lambda) \\ Y_+^u(\lambda) \Lambda_+^u(\lambda) \end{pmatrix}
 \end{pmatrix} = I_{2m}
 \end{equation*}
holds for the matrices $Y^{s,u}_{\pm}(\lambda), \Lambda^{s,u}_{\pm}(\lambda)$ determined from
\eqref{b3-e4.26}. Condition \eqref{b3-e4.25} is then trivially satisfied.
With these preparations \cite[Cor.4.1]{b3-BLR14} reads as follows:
\begin{theorem} \label{b3-t9}
Let the assumptions of Theorem \ref{b3-t3} hold except for the condition
\eqref{b3-e4.16} on the point spectrum. Let
$\Gamma \subset \{z \in \CC: \mathrm{Re}z > -\beta\}$ ( $\beta$ from
\eqref{b3-e4.15}) be a closed contour which lies in the resolvent set of
the operator pencil
\begin{align*}
\mathcal{L}(\lambda) = \lambda I - \mathcal{L} = \lambda I
-(A \partial_{\xi}^2 + B(\cdot)\partial_{\xi} + C(\cdot))
\end{align*}
with $\mathcal{L}$ from \eqref{b3-e4.12}. Further, given linearly independent functions
$v_{k}\in L^{\infty}(\RR,\RR^m)$, $k=1,\ldots,\ell$ with compact support
 and let $w_j,j=1,\ldots,p$ be linearly independent functionals
 on $L^{\infty}(\RR,\RR^m)$ defined by
 \begin{equation*} \label{b3-e5.11}
 \langle w_j, u\rangle = \int_{\RR} \hat{w}_j(x)^{\top} u(x) dx, \quad
 \hat{w}_j \in L^1(\RR,\RR^m),\; j=1,\ldots, p.
\end{equation*}
Then for $J=[x_-,x_+]$ sufficiently large the linear boundary value problem
with projection boundary conditions
\begin{equation*} \label{b3-e5.12}
\mathcal{L}(\lambda) u_{k,J} = v_{k|J} \; \text{in}\; J, \quad \mathcal{B}_J(\lambda) u = 0
\end{equation*}
has a unique solution $u_{k,J}(\cdot,\lambda)\in H^2(J,\RR^m)$ for all $k=1,\ldots,\ell$
and $\lambda \in \Gamma$.
Moreover, for every $0 < \alpha < \beta$ there exists a constant $C>0$
such that the matrices
\begin{equation} \label{b3-e5.12a}
E_{\nu,J} = \left( \frac{1}{ 2 \pi i} \int_{\Gamma} \lambda^{\nu}
\langle \hat{w}_{j|J}, u_{k,J}(\cdot,\lambda) \rangle_{L^2(J)} d \lambda\right)_{j=1,\ldots,p}^{k=1,\ldots,\ell},  \quad \nu=0,1
\end{equation}
satisfy the estimate
\begin{equation} \label{b3-e5.13}
|E_{\nu} - E_{\nu,J}| \le C \exp( - 2 \alpha \min (|x_-|,x_+)), \quad
\nu = 0,1.
\end{equation}
\end{theorem}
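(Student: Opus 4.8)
The plan is to reduce the second-order pencil to a first-order system, exploit the exponential dichotomy that the spectral assumptions provide along $\Gamma$, invoke the superconvergence theory of projection boundary conditions, and finally propagate the resulting pointwise estimates through the contour integrals in \eqref{b3-e5.12a}. First I would rewrite $\mathcal{L}(\lambda)u=v$ as a first-order system for $U=(u,u_{\xi})^{\top}$, whose asymptotic coefficient matrices at $\pm\infty$ have spectra given by the spatial eigenvalues $\Lambda_{\pm}^{s,u}(\lambda)$ of the quadratic eigenvalue problem \eqref{b3-e4.26}. The dispersion condition \eqref{b3-e4.15} guarantees that for $\mathrm{Re}\,\lambda>-\beta$ none of these eigenvalues is purely imaginary, so that $m$ of them are stable and $m$ unstable; since $\Gamma$ is compact and contained in $\rho(\mathcal{L})$, the whole-line system possesses an exponential dichotomy on $\RR$ with projections and rate $\alpha$ (any $0<\alpha<\beta$) that are uniform in $\lambda\in\Gamma$. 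In particular $\mathcal{L}(\lambda)^{-1}v_k$ exists, is unique, and decays like $e^{-\alpha|x|}$ outside the compact support of $v_k$, uniformly in $\lambda\in\Gamma$ by holomorphy of the resolvent and compactness of $\Gamma$.

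Next I would set up the truncated problem. The matrices $P_{\pm}(\lambda),Q_{\pm}(\lambda)$ are constructed from \eqref{b3-e4.26} so that the block normalisation stated before the theorem equals $I_{2m}$; hence the determinant condition \eqref{b3-e4.25} holds identically on $\Gamma$, which by the boundary-value theory for systems with exponential dichotomy (\cite{b3-B90},\cite{b3-T08a}) yields, for $J=[x_-,x_+]$ large enough, a unique solution $u_{k,J}(\cdot,\lambda)\in H^2(J,\RR^m)$, with norms bounded uniformly in $\lambda\in\Gamma$. The decisive point is the superconvergence of projection boundary conditions: because they project exactly onto the asymptotic stable and unstable subspaces, the interior error satisfies
\[
\|u_{k,J}(\cdot,\lambda)-\mathcal{L}(\lambda)^{-1}v_k\|_{L^{\infty}(J)}\le C\,e^{-2\alpha\min(|x_-|,x_+)},
\]
the doubling of the rate arising from the product of the stable decay at $x_+$ and the unstable decay at $x_-$ of the dichotomy (cf. \cite{b3-B90}); here one must check that $C$ is uniform in $\lambda\in\Gamma$.

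Finally I would estimate the functional error. Writing $u_k=\mathcal{L}(\lambda)^{-1}v_k$, one splits
\[
\langle w_j,u_k\rangle-\langle\hat{w}_{j|J},u_{k,J}\rangle_{L^2(J)}=\int_J \hat{w}_j^{\top}(u_k-u_{k,J})\,dx+\int_{\RR\setminus J}\hat{w}_j^{\top}u_k\,dx.
\]
The first integral is bounded by $\|\hat{w}_j\|_{L^1}$ times the superconvergent interior error, hence is $O(e^{-2\alpha\min(|x_-|,x_+)})$; the tail integral is controlled using $\hat{w}_j\in L^1$ together with the exponential decay of $u_k$ outside $\mathrm{supp}(v_k)$, which for test data of compact support is again of order $e^{-2\alpha\min(|x_-|,x_+)}$. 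Multiplying by $\lambda^{\nu}$, integrating over the compact contour $\Gamma$, and using $\sup_{\lambda\in\Gamma}|\lambda|^{\nu}<\infty$ together with the uniform bounds above then gives \eqref{b3-e5.13}.

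I expect the main obstacle to be the second step: proving the doubled exponential rate, i.e.\ the superconvergence of the projection boundary conditions, uniformly in $\lambda$ along the whole contour. This requires $\lambda$-uniform dichotomy estimates and a quantitative comparison between the exact stable and unstable manifolds of the finite-interval problem and the asymptotic subspaces entering \eqref{b3-e4.26}. Once this estimate is in hand, the remaining steps are essentially bookkeeping with the uniform resolvent decay and the $L^1$-integrability of the test functionals.
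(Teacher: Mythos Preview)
The paper does not prove this result; it quotes it from \cite[Cor.~4.1]{b3-BLR14}. Your outline---first-order reduction, exponential dichotomy on $\Gamma$ from the dispersion bound \eqref{b3-e4.15}, the doubled-rate superconvergence of projection boundary conditions (\cite{b3-B90}), and propagation through the contour integral---is exactly the methodology of \cite{b3-BLR14}, so as a strategy your proposal is on target.

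There is, however, one step where your argument is too quick. In the splitting
\[
\langle w_j,u_k\rangle-\langle\hat w_{j|J},u_{k,J}\rangle_{L^2(J)}
=\int_J \hat w_j^{\top}(u_k-u_{k,J})\,dx+\int_{\RR\setminus J}\hat w_j^{\top}u_k\,dx,
\]
the interior term does inherit the doubled rate from superconvergence, but the tail term does not obviously do so. With only $\hat w_j\in L^1$ and $|u_k(x)|\le C e^{-\alpha_0|x|}$ outside $\mathrm{supp}(v_k)$ (where $\alpha_0$ is the dichotomy rate on $\Gamma$), the tail is bounded by $Ce^{-\alpha_0 R}\|\hat w_j\|_{L^1(\RR\setminus J)}$, which is $o(e^{-\alpha_0 R})$ but not $O(e^{-2\alpha R})$ in general---take e.g.\ $\hat w_j(x)\sim |x|^{-2}$. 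So the doubled rate cannot come from the tail estimate as you wrote it; either one needs exponential decay of the $\hat w_j$, or one must argue differently. A related loose point is your identification of the dichotomy rate with an arbitrary $\alpha\in(0,\beta)$: the exponent $\alpha_0$ is the minimal real-part gap of the spatial eigenvalues from \eqref{b3-e4.26} over $\lambda\in\Gamma$, which is determined by the position of $\Gamma$ and is not tied to the spectral gap $\beta$ of the dispersion set in any simple way. Both issues concern the precise constant in the exponent rather than the mechanism of the proof, but since the theorem claims the rate $2\alpha$ for \emph{every} $\alpha<\beta$, you should consult the exact hypotheses and exponent bookkeeping in \cite[Sect.~4]{b3-BLR14} before asserting them.
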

Note that the integrals \eqref{b3-e5.12a} are the quantities approximating
the integrals \eqref{b3-e5.7a} over the unbounded domain.
With the estimates \eqref{b3-e5.13} at hand it is not difficult to show that the
singular values obtained in \eqref{b3-e5.6} and finally the eigenvalues
of $E_{\mathcal{L}}$ in \eqref{b3-e5.7} inherit the exponential error
estimate (see \cite[Section 4]{b3-BLR14}).

Let us finally note that the computation of isolated eigenvalues for
the linearised operator becomes
rather challenging for waves in two and more space dimensions.
We consider the contour method to be a true competitor to classical
methods for computing eigenvalues of linearisations at such profiles.

\end{document}